\documentclass[x11names,11pt,twoside]{amsart}

\usepackage[utf8]{inputenc}
\usepackage[english]{babel}
\usepackage[T1]{fontenc}
\usepackage{amsfonts}
\usepackage{geometry}
\usepackage{tikz}
\tikzstyle{every picture}=[line width=.7pt,minimum size=3pt,every label/.append style={font=\normalsize},label distance=2pt]
\tikzstyle{every node}=[font=\normalsize,circle,draw=black,fill=black,inner sep=0pt,minimum width=1.3pt]
\usepackage{amsmath, amsthm, amssymb}
\hyphenation{english}
\usepackage{latexsym}
\usepackage{enumitem}
\usepackage{fancyhdr}
\usepackage{mathtools}
\usepackage{caption}
\usepackage[labelformat=simple]{subcaption}

\captionsetup[subfigure]{labelfont=rm}
\usepackage{color}
\usepackage{bm}
\usepackage[bookmarks=true]{hyperref}

\makeatletter
\newtheorem*{rep@theorem}{\rep@title}
\newcommand{\newreptheorem}[2]{%
\newenvironment{rep#1}[1]{%
 \def\rep@title{#2 \ref{##1}}%
 \begin{rep@theorem}}%
 {\end{rep@theorem}}}
\makeatother

\theoremstyle{plain}
\newtheorem{theorem}{Theorem}[section]
\newtheorem{proposition}[theorem]{Proposition}
\newtheorem{corollary}[theorem]{Corollary}
\newtheorem{lemma}[theorem]{Lemma}
\theoremstyle{definition}
\newtheorem{definition}[theorem]{Definition}
\newtheorem{example}[theorem]{Example}

\newtheorem{remark}[theorem]{Remark}
\newreptheorem{theorem}{Theorem}

\geometry{verbose,tmargin=2.5cm,lmargin=2cm,rmargin=2cm, headheight=12pt}
\headsep 4mm

\date{}



\newcommand{\hgt}{{\rm ht}}

\newcommand{\depth}{{\rm depth}}

\newcommand{\mc}{\mathcal}

\fancyhf{}
\fancyhead[RO,LE]{\small\thepage}

\pagenumbering{arabic}
\pagestyle{fancy}

\title{Binomial edge ideals of bipartite graphs}

\author{Davide Bolognini, Antonio Macchia, Francesco Strazzanti}

\address{{\small Davide Bolognini, Institut f\"{u}r Mathematik, Albrechtstrasse 28a, 49076 Osnabr\"{u}ck, Germany}}
\email{{\small davide.bolognini@yahoo.it}}

\address{{\small Antonio Macchia, Dipartimento di Matematica, Università degli Studi di Bari ``Aldo Moro'', Via Orabona 4, 70125 Bari, Italy}}
\email{{\small macchia.antonello@gmail.com}}

\address{{\small Francesco Strazzanti, Departamento de \'Algebra, Facultad de Matem\'aticas, Universidad de Sevilla, Avda. Reina Mercedes s/n, 41080 Sevilla, Spain}}
\email{{\small francesco.strazzanti@gmail.com}}

\begin{document}
\thanks{The first and the second author were supported by INdAM}
\thanks{The third author was partially supported by MTM2013-46231-P (Ministerio de Economı\'ia y Competitividad) and FEDER}

\begin{abstract}
We classify the bipartite graphs $G$ whose binomial edge ideal $J_G$ is Cohen-Macaulay. The connected components of such graphs can be obtained by gluing a finite number of basic blocks with two operations. In this context we prove the converse of a well-known result due to Hartshorne, showing that the Cohen-Macaulayness of these ideals is equivalent to the connectedness of their dual graphs. We study interesting properties also for non-bipartite graphs and in the unmixed case, constructing classes of bipartite graphs with $J_G$ unmixed and not Cohen-Macaulay.
\end{abstract}

\subjclass[2010]{Primary 13H10, 13C05, 05C40; Secondary 05E40, 05C99.}
\keywords{Binomial edge ideals, bipartite graphs, Cohen-Macaulay rings, unmixed ideals, dual graph of an ideal}

\maketitle

\section{Introduction}\label{S.Introduction}
Binomial edge ideals were introduced independently in \cite{HHHKR10} and \cite{O11}. They are a natural generalization of the ideals of $2$-minors of a $(2 \times n)$-generic matrix \cite{BV88}: their generators are those $2$-minors whose column indices correspond to the edges of a graph. In this perspective, the ideals of $2$-minors are binomial edge ideals of complete graphs. On the other hand, binomial edge ideals arise naturally in Algebraic Statistics, in the context of conditional independence ideals, see \cite[Section 4]{HHHKR10}.

More precisely, given a finite simple graph $G$ on the vertex set $[n]=\{1,\dots,n\}$, the \textit{binomial edge ideal} associated with $G$ is the ideal
\[
J_G=(x_iy_j-x_jy_i : \{i,j\} \text{ is an edge of } G) \subset R=K[x_i,y_i : i \in [n]].
\]

Binomial edge ideals have been extensively studied, see e.g. \cite{BN17}, \cite{EHH11}, \cite{EHH14}, \cite{KM15}, \cite{KM16}, \cite{MM13}, \cite{RR14}, \cite{R13}. Yet a number of interesting questions is still unanswered. In particular, many authors have studied classes of Cohen-Macaulay binomial edge ideals in terms of the associated graph, see e.g. \cite{BN17}, \cite{EHH11}, \cite{KM15}, \cite{RR14}, \cite{R13}. Some of these results concern a class of chordal graphs, the so-called \textit{closed graphs}, introduced in \cite{HHHKR10}, and their generalizations, such as block and generalized block graphs \cite{KM15}.

In the context of squarefree monomial ideals, any graph can be associated with the so-called \textit{edge ideal}, whose generators are monomials of degree $2$ corresponding to the edges of the graph. Herzog and Hibi, in \cite[Theorem 3.4]{HH05}, classified Cohen-Macaulay edge ideals of bipartite graphs in purely combinatorial terms.
In the same spirit, we provide a combinatorial classification of Cohen-Macaulay binomial edge ideals of bipartite graphs. In particular, we present a family of bipartite graphs $F_m$ whose binomial edge ideal is Cohen-Macaulay, and we prove that, if $G$ is connected and bipartite, then $J_G$ is Cohen-Macaulay if and only if $G$ can be obtained recursively by gluing a finite number of graphs of the form $F_m$ via two operations.

We now explain in more detail the basic blocks and the operations in our classification.
For the terminology about graphs we refer to \cite{D10}.

\textbf{Basic blocks:} For every $m \geq 1$, let $F_m$ be the graph (see Figure \ref{F.exampleF_m}) on the vertex set $[2m]$ and with edge set
\[
E(F_m) = \left\{ \{2i,2j-1\} : i=1,\dots,m, j=i,\dots,m \right\}.
\]
Notice that $F_1$ is the single edge $\{1,2\}$ and $F_2$ is the path of length $3$.

\begin{figure}[ht!]
\begin{subfigure}[c]{0.45\textwidth}
\centering
\begin{tikzpicture}
\node[label={above:$1$}] (a) at (0,1.5) {};
\node[label={above:$3$}] (b) at (1,1.5) {};
\node[label={above:$5$}] (c) at (2,1.5) {};
\node[label={below:$2$}] (e) at (0,0) {};
\node[label={below:$4$}] (f) at (1,0) {};
\node[label={below:$6$}] (g) at (2,0) {};
\draw (0,1.5) -- (0,0) -- (1,1.5) -- (1,0) -- (2,1.5) -- (2,0)
(2,1.5) -- (0,0);
\end{tikzpicture}
\caption{The graph $F_3$}\label{F.F_3}
\end{subfigure}
\begin{subfigure}[c]{0.45\textwidth}
\centering
\begin{tikzpicture}
\node[label={above:$1$}] (a) at (0,1.5) {};
\node[label={above:$3$}] (b) at (1,1.5) {};
\node[label={above:$5$}] (c) at (2,1.5) {};
\node[label={above:$7$}] (d) at (3,1.5) {};
\node[label={below:$2$}] (e) at (0,0) {};
\node[label={below:$4$}] (f) at (1,0) {};
\node[label={below:$6$}] (g) at (2,0) {};
\node[label={below:$8$}] (h) at (3,0) {};
\draw (0,1.5) -- (0,0) -- (1,1.5) -- (1,0) -- (2,1.5) -- (2,0) -- (3,1.5) -- (3,0)
(2,1.5) -- (0,0) -- (3,1.5) -- (1,0);
\end{tikzpicture}
\caption{The graph $F_4$}\label{F.F_4}
\end{subfigure}
\caption{} \label{F.exampleF_m}
\end{figure}

\textbf{Operation $\ast$:} For $i=1,2$, let $G_i$ be a graph with at least one vertex $f_i$ of degree one, i.e., a {\em leaf} of $G_i$. We denote the graph $G$ obtained by identifying $f_1$ and $f_2$ by $G=(G_1,f_1) \ast (G_2,f_2)$, see Figure \ref{F.example2a}. This is a particular case of an operation studied by Rauf and Rinaldo in \cite[Section 2]{RR14}.

\textbf{Operation $\circ$:} For $i=1,2$, let $G_i$ be a graph with at least one leaf $f_i$, $v_i$ its neighbour and assume $\deg_{G_i}(v_i) \geq 3$. We define $G = (G_1,f_1) \circ (G_2,f_2)$ to be the graph obtained from $G_1$ and $G_2$ by removing the leaves $f_1,f_2$ and identifying $v_1$ and $v_2$, see Figure \ref{F.example2b}.

For both operations, if it is not important to specify the vertices $f_i$ or it is clear from the context, we simply write $G_1 \ast G_2$ or $G_1 \circ G_2$.

\begin{figure}[ht!]
\begin{subfigure}[c]{0.45\textwidth}
\centering
\begin{tikzpicture}
\node (a) at (1,1.5) {};
\node (b) at (2,1.5) {};
\node (c) at (3,1.5) {};
\node (d) at (0,0) {};
\node (e) at (1,0) {};
\node (f) at (2,0) {};
\node (g) at (3,0) {};
\node[label={[label distance=-3mm]270:$f_1 \!=\! f_2$}] (h) at (-1,0) {};
\node (i) at (-2,0) {};
\node (i) at (-3,0) {};
\node (i) at (-4,0) {};
\node (j) at (-3,1.5) {};
\node (k) at (-4,1.5) {};
\draw (-4,0) -- (-4,1.5) -- (-3,0) -- (-3,1.5) -- (-2,0) -- (-1,0) -- (0,0) -- (1,1.5) -- (1,0) -- (2,1.5) -- (2,0) -- (3,1.5) -- (3,0)
(2,1.5) -- (0,0) -- (3,1.5) -- (1,0)
(-4,1.5) -- (-2,0);
\end{tikzpicture}
\caption{The graph $F_3 \ast F_4$}\label{F.example2a}
\end{subfigure}
\begin{subfigure}[c]{0.45\textwidth}
\centering
\begin{tikzpicture}
\node (a) at (1,1.5) {};
\node (b) at (2,1.5) {};
\node (c) at (3,1.5) {};
\node[label={[label distance=-3mm]270:$v_1 \!=\! v_2$}] (d) at (0,0) {};
\node (e) at (1,0) {};
\node (f) at (2,0) {};
\node (g) at (3,0) {};
\node (h) at (-1,0) {};
\node (i) at (-2,0) {};
\node (j) at (-1,1.5) {};
\node (k) at (-2,1.5) {};
\draw (-2,0) -- (-2,1.5) -- (-1,0) -- (-1,1.5) -- (0,0) -- (1,1.5) -- (1,0) -- (2,1.5) -- (2,0) -- (3,1.5) -- (3,0)
(2,1.5) -- (0,0) -- (3,1.5) -- (1,0)
(-2,1.5) -- (0,0);
\end{tikzpicture}
\caption{The graph $F_3 \circ F_4$}\label{F.example2b}
\end{subfigure}
\caption{} \label{F.exampleOperations}
\end{figure}

Finally, we recall the notion of dual graph of an ideal, which is one of the main tools in the proof of our classification. We follow the notation used in \cite{BV15}.

\indent \textbf{Dual graph:}  Let $I$ be an ideal in a polynomial ring $A=K[x_1,\dots,x_n]$ and let $\mathfrak p_1,\dots,\mathfrak p_r$ be the minimal prime ideals of $I$. The \textit{dual graph} $\mathcal D(I)$ is a graph with vertex set $[r]$ and edge set
\[
\{\{i,j\} : \hgt(\mathfrak p_i+\mathfrak p_j)-1=\hgt(\mathfrak p_i)=\hgt(\mathfrak p_j)=\hgt(I)\}.
\]

This notion was originally studied by Hartshorne in \cite{H62} in terms of \textit{connectedness in codimension one}.
By \cite[Corollary 2.4]{H62}, if $A/I$ is Cohen-Macaulay, then the algebraic variety defined by $I$ is connected in codimension one, hence $I$ is unmixed by \cite[Remark 2.4.1]{H62}. The connectedness of the dual graph translates in combinatorial terms the notion of connectedness in codimension one, see \cite[Proposition 1.1]{H62}. Thus, if $A/I$ is Cohen-Macaulay, then $\mathcal D(I)$ is connected. The converse does not hold in general, see for instance Remark \ref{connessononCM}. We will show that for binomial edge ideals of connected bipartite graphs this is indeed an equivalence. In geometric terms, this means that the algebraic variety defined by $J_G$ is Cohen-Macaulay if and only if it is connected in codimension one.\\[2mm]
\indent Given a graph $G$, the ideal $J_G$ is Cohen-Macaulay if and only if the binomial edge ideal of each connected component of $G$ is Cohen–Macaulay. Thus, we may assume $G$ connected with at least two vertices.

Before stating the main result, we recall the notion of cut set, which is central in the study of binomial edge ideals. In fact, there is a bijection between the cut sets of a graph $G$ and the minimal prime ideals of $J_G$, see \cite[Section 3]{HHHKR10}. For a subset $S \subseteq [n]$, let $c_G(S)$ be the number of connected components of the induced subgraph $G_{[n] \setminus S}$. The set $S$ is called \textit{cut set} of $G$ if $S=\emptyset$ or $S \neq \emptyset$ and $c_G(S \setminus \{i\}) < c_G(S)$ for every $i \in S$. Moreover, we call \textit{cut vertex} a cut set of cardinality one. We denote by $\mathcal M(G)$ the set of cut sets of $G$.

We are now ready to state our main result.

\begin{reptheorem}{T.CHARACTERIZATION}
Let $G$ be a connected bipartite graph. The following properties are equivalent:
\begin{itemize}
  \item[{\rm a)}] $J_G$ is Cohen-Macaulay;
  \item[{\rm b)}] the dual graph $\mathcal D(J_G)$ is connected;
  \item[{\rm c)}] $G=A_1 \ast A_2 \ast \cdots \ast A_k$, where $A_i=F_m$ or $A_i=F_{m_1} \circ \cdots \circ F_{m_r}$, for some $m \geq 1$ and $m_j \geq 3$;
  \item[{\rm d)}] $J_G$ is unmixed and for every non-empty $S \in \mc M(G)$, there exists $s \in S$ such that $S \setminus \{s\} \in \mc M(G)$.
\end{itemize}
\end{reptheorem}

The paper is structured as follows. In Section \ref{S.unmixed} we study unmixed binomial edge ideals of bipartite graphs. A combinatorial characterization of unmixedness was already proved in \cite{HHHKR10} (see also \cite[Lemma 2.5]{RR14}), in terms of the cut sets of the underlying graph.

A first distinguishing fact about bipartite graphs with $J_G$ unmixed is that they have exactly two leaves (Proposition \ref{P.unmixed2leaves}). This, in particular, means that $G$ has at least two cut vertices. In Proposition \ref{Semiconi}, we present a construction that is useful in the study of the basic blocks and to produce new examples of unmixed binomial edge ideals, which are not Cohen-Macaulay.

In Section \ref{S.basicBlocks} we prove that the ideals $J_{F_m}$, associated with the basic blocks of our construction, are Cohen-Macaulay, see Proposition \ref{P.type1CM}. In Section \ref{S.gluing} we study the operations $\ast$ and $\circ$. In \cite[Theorem 2.7]{RR14}, Rauf and Rinaldo proved that $J_{G_1 \ast G_2}$ is Cohen-Macaulay if and only if so are $J_{G_1}$ and $J_{G_2}$. In Theorem \ref{T.type2bCMbipartite}, we show that $J_G$ is Cohen-Macaulay if $G=F_{m_1} \circ \cdots \circ F_{m_k}$, for every $k \geq 2$ and $m_i \geq 3$.
Using these results, we prove the implication c) $\Rightarrow$ a) of Theorem \ref{T.CHARACTERIZATION}.

Section \ref{S.dualGraph} is devoted to the study of the dual graph of binomial edge ideals. This is one of the main tools in the proof of Theorem \ref{T.CHARACTERIZATION}. First of all, given a (not necessarily bipartite) graph $G$ with $J_G$ unmixed, in Theorem \ref{Dual graph} we provide an explicit description of the edges of the dual graph $\mc D(J_G)$ in terms of the cut sets of $G$. This allows us to show infinite families of bipartite graphs whose binomial edge ideal is unmixed and not Cohen-Macaulay, see Examples \ref{E.unmixedNotCM1} and \ref{E.unmixedNotCM2}.

A crucial result concerns a basic, yet elusive, property of cut sets of unmixed binomial edge ideals. In Lemma \ref{INTERSECTION}, we show that, mostly for bipartite graphs and under some assumption, the intersection of any two cut sets is a cut set. This leads to the proof of the equivalence b) $\Leftrightarrow$ d) in Theorem \ref{T.CHARACTERIZATION}, see Theorem \ref{T.cutSetMinusVertex}.
On the other hand, if $G=G_1 \ast G_2$ or $G=G_1 \circ G_2$ is bipartite and $\mc D(J_G)$ is connected, then the dual graphs of $G_1$ and $G_2$ are connected, see Theorem \ref{T.moreThan2CutVertices}. Thus, we may reduce to consider bipartite graphs with exactly two cut vertices and prove the implication b) $\Rightarrow$ c) of Theorem \ref{T.CHARACTERIZATION}.

It is worth noting that, the main theorem gives also a classification of other classes of Cohen-Macaulay binomial ideals associated with bipartite graphs, Corollary \ref{C.otherBinomialIdeals}: \textit{Lov\'asz-Saks-Schrijver ideals} \cite{HMMW15}, \textit{permanental edge ideals} \cite[Section 3]{HMMW15} and \textit{parity binomial edge ideals} \cite{KSW16}.

As an application of the main result, in Corollary \ref{C.Hirsch}, we show that Cohen-Macaulay binomial edge ideals of bipartite graphs are Hirsch, meaning that the diameter of the dual graph of $J_G$ is bounded above by the height of $J_G$, verifying \cite[Conjecture 1.6]{BV15}.

All the results presented in this paper are independent of the field.

\section{Unmixed binomial edge ideals of bipartite graphs} \label{S.unmixed}
In this paper all graphs are \textit{finite} and \textit{simple} (without loops and multiple edges). In what follows, unless otherwise stated, we assume that $G$ is a connected graph with at least two vertices. Given a graph $G$, we denote by $V(G)$ its vertex set and by $E(G)$ its edge set. If $G$ is a {\em bipartite graph}, we denote by $V(G)=V_1 \sqcup V_2$ the {\em bipartition} of the vertex set and call $V_1,V_2$ the \textit{bipartition sets} of $G$.

For a subset $S \subseteq V(G)$, we denote by $G_S$ the \textit{subgraph induced} in $G$ by $S$, which is the graph with vertex set $S$ and edge set consisting of all the edges of $G$ with both endpoints in $S$.

We recall some definitions and results from \cite{HHHKR10}. Let $G$ be a graph with vertex set $[n]$. We denote by $R=K[x_i, y_i : i \in [n]]$ the polynomial ring in which the ideal $J_G$ is defined and, if $S \subseteq [n]$, we set $\overline S=[n] \setminus S$. Let $c_G(S)$, or simply $c(S)$, be the number of connected components of the induced subgraph $G_{\overline S}$ and let $G_1,\dots,G_{c_G(S)}$ be the connected components of $G_{\overline S}$. For each $G_i$, denote by $\widetilde G_i$ the complete graph on $V(G_i)$ and define the ideal
\vspace*{-2mm}
\[
P_S(G) = \left( \, \bigcup_{i \in S} \ \{x_i,y_i\}, J_{\widetilde G_1},\dots,J_{\widetilde G_{c_G(S)}} \right).
\]

In \cite[Section 3]{HHHKR10}, it is shown that $P_S(G)$ is a prime ideal for every $S \subseteq [n]$,  $\hgt(P_S(G))=n+|S|-c_G(S)$ and $J_G= \bigcap_{S \subseteq [n]} P_S(G)$. Moreover, $P_S(G)$ is a minimal prime ideal of $J_G$ if and only if $S=\emptyset$ or $S \neq \emptyset$ and $c_G(S \setminus \{i\}) < c_G(S)$ for every $i \in S$. In simple terms the last condition means that, adding a vertex of $S$ to $G_{\overline S}$, we connect at least two connected components of $G_{\overline S}$. We set
\begin{align*}
\mathcal M(G) &= \{S \subset [n] : P_S(G) \text{ is a minimal prime ideal of } J_G\}\\
&= \{\emptyset\} \cup \{S \subset [n] : S \neq \emptyset, \, c_G(S \setminus \{i\}) < c_G(S) \text{ for every } i \in S\},
\end{align*}
and we call \textit{cut sets} of $G$ the elements of $\mathcal M(G)$. If $\{v\} \in \mathcal M(G)$, we say that $v$ is a \textit{cut vertex} of $G$.

We further recall that a \textit{clique} of a graph $G$ is a subset $C \subseteq V(G)$ such that $G_C$ is complete. A \textit{free vertex} of $G$ is a vertex that belongs to exactly one maximal clique of $G$. A vertex of degree $1$ in $G$, which in particular is a free vertex, is called a \textit{leaf} of $G$.

\begin{remark} \label{R.leafNotCutSet}
If $v$ is a free vertex of a graph $G$, then $v \notin S$, for every $S \in \mathcal M(G)$. In fact, if $v \in S$, for some $S \in \mathcal M(G)$, then $c_G(S)=c_G(S \setminus \{v\})$.
\end{remark}

Recall that an ideal is \textit{unmixed} if all its minimal primes have the same height. By \cite[Lemma 2.5]{RR14}, $J_G$ is unmixed if and only if for every $S \in \mc{M}(G)$,
\begin{equation}\label{Eq.unmixedness}
c_G(S)=|S|+1.
\end{equation}
This follows from the equality $\hgt(P_\emptyset(G))=n-1=\hgt(P_S(G))=n+|S|-c_G(S)$.

Moreover, for every graph $G$, with $J_G$ unmixed, we have that $\dim(R/J_G)=|V(G)|+c$, where $c$ is the number of connected components of $G$, see \cite[Corollary 3.3]{HHHKR10}.

In this section, we study some properties of unmixed binomial edge ideals of bipartite graphs. It is well-known that if $J_G$ is Cohen-Macaulay, then $J_G$ is unmixed. The converse is, in general, not true, also for binomial edge ideals of bipartite graphs. In fact, in the following example we show two classes of bipartite graphs whose binomial edge ideals are unmixed but not Cohen-Macaulay.

\begin{example}\label{E.unmixedNotCM1}
For every $k \geq 4$, let $M_{k,k}$ be the graph with vertex set $[2k]$ and edge set
\[
E(M_{k,k})=\{\{1,2\},\{2k-1,2k\}\} \cup \{\{2i,2j-1\} : i=1,\dots,k-1, j=2,\dots,k\},
\]
see Figure \ref{F.M_(4,4)}, and let $M_{k-1,k}$ be the graph with vertex set $[2k-1]$ and edge set
\[
E(M_{k-1,k})=\{\{1,2\},\{2k-2,2k-1\}\} \cup \{\{2i,2j-1\} : i=1,\dots,k-1, j=2,\dots,k-1\},
\]
see Figure \ref{F.M_(3,4)}.

\begin{figure}[ht!]
\begin{subfigure}[c]{0.45\textwidth}
\centering
\begin{tikzpicture}
\node[label={above:$1$}] (a) at (0,1.5) {};
\node[label={above:$3$}] (b) at (1,1.5) {};
\node[label={above:$5$}] (c) at (2,1.5) {};
\node[label={above:$7$}] (d) at (3,1.5) {};
\node[label={below:$2$}] (e) at (0,0) {};
\node[label={below:$4$}] (f) at (1,0) {};
\node[label={below:$6$}] (g) at (2,0) {};
\node[label={below:$8$}] (h) at (3,0) {};
\draw (0,1.5) -- (0,0) -- (1,1.5) -- (1,0) -- (2,1.5) -- (2,0) -- (3,1.5) -- (3,0)
(2,1.5) -- (0,0) -- (3,1.5) -- (1,0)
(1,1.5) -- (2,0);
\end{tikzpicture}
\caption{The graph $M_{4,4}$}\label{F.M_(4,4)}
\end{subfigure}
\begin{subfigure}[c]{0.45\textwidth}
\centering
\begin{tikzpicture}
\node[label={above:$1$}] (a) at (0,1.5) {};
\node[label={above:$3$}] (b) at (1,1.5) {};
\node[label={above:$5$}] (c) at (2,1.5) {};
\node[label={above:$7$}] (d) at (3,1.5) {};
\node[label={below:$2$}] (e) at (0.5,0) {};
\node[label={below:$4$}] (f) at (1.5,0) {};
\node[label={below:$6$}] (g) at (2.5,0) {};
\draw (0,1.5) -- (0.5,0) -- (1,1.5) -- (1.5,0) -- (2,1.5) -- (2.5,0) -- (3,1.5)
(0.5,0) -- (2,1.5)
(1,1.5) -- (2.5,0);
\end{tikzpicture}
\caption{The graph $M_{3,4}$}\label{F.M_(3,4)}
\end{subfigure}
\caption{}
\end{figure}

Notice that the graphs $M_{k,k}$ and $M_{k-1,k}$ are obtained by adding two \textit{whiskers} to some complete bipartite graph. Recall that adding a whisker to a graph $G$ means adding a new vertex and connect it to one of the vertices of $G$.

Let $V_1 \sqcup V_2$ be the bipartition of $M_{k,k}$ and of $M_{k-1,k}$ such that $V_1$ contains the odd labelled vertices and $V_2$ contains the even labelled vertices. We claim that
\begin{gather*}
\mathcal M(M_{k,k}) = \{\emptyset, \{2\}, \{2k-1\}, \{2,2k-1\}, V_1 \setminus \{1\}, V_2 \setminus \{2k\}\} \text{ and} \\
\mathcal M(M_{k-1,k}) = \{\emptyset, \{2\}, \{2k-2\}, \{2,2k-2\}, V_1 \setminus \{1,2k-1\}, V_2\}.
\end{gather*}

The inclusion $\supseteq$ is clear. We prove the other inclusion for $M_{k,k}$, the proof is similar for $M_{k-1,k}$. Let $S \in \mc M(M_{k,k})$. If $S \subseteq \{2,2k-1\}$, there is nothing to prove. If there exists $v \in S \setminus \{2,2k-1\}$, then $S=V_1 \setminus \{1\}$ or $S=V_2 \setminus \{2k\}$. In fact, if $v \in V_1 \setminus \{1\}$ and there is $w \in (V_1 \setminus \{1\}) \setminus S$, then $c(S \setminus \{v\})=c(S)$, a contradiction. Hence, $V_1 \setminus \{1\} \subseteq S$. On the other hand, if $w \in V_2 \setminus \{2k\}$, then $w \notin S$. This shows that $S=V_1 \setminus \{1\}$ The other case is similar.

Moreover, it is easy to check that $J_{M_{k,k}}$ and $J_{M_{k-1,k}}$ are unmixed. In Example \ref{E.unmixedNotCM2} we will show that these ideals are not Cohen-Macaulay.
\end{example}

A first nice fact about bipartite graphs with unmixed binomial edge ideal is that they have at least two cut vertices.

\begin{proposition}\label{P.unmixed2leaves}
Let $G$ be a bipartite graph such that $J_G$ is unmixed. Then $G$ has exactly $2$ leaves.
\end{proposition}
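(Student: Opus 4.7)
The plan is to apply the unmixedness criterion $c_G(S) = |S|+1$ from \eqref{Eq.unmixedness} to two carefully chosen candidate cut sets. Write $V_1 \sqcup V_2$ for the bipartition of $G$ and let $L_i$ denote the set of leaves of $G$ lying in $V_i$; the target equality is $|L_1|+|L_2|=2$. The candidate cut sets will be $S_i := V_i \setminus L_i$ for $i=1,2$.

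I would first verify that, whenever $S_i \neq \emptyset$, it is indeed a cut set with $c_G(S_i) = |V_{3-i}|$. In the induced subgraph $G_{\overline{S_i}} = G_{V_{3-i} \cup L_i}$, both $V_{3-i}$ and $L_i$ are independent (since $G$ is bipartite), so the only edges attach each leaf $w \in L_i$ to its unique neighbor in $V_{3-i}$. Because a degree-$1$ vertex cannot be an interior vertex on a path, any two distinct vertices of $V_{3-i}$ lie in different components; therefore $c_G(S_i) = |V_{3-i}|$. For any $v \in S_i$ we have $\deg_G(v) \ge 2$, and its neighbors (all in $V_{3-i}$) lie in distinct components of $G_{\overline{S_i}}$, so re-adding $v$ merges at least two of them, giving $c_G(S_i \setminus \{v\}) < c_G(S_i)$ and proving $S_i \in \mc M(G)$. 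The unmixedness identity now yields $|V_{3-i}| = |V_i| - |L_i| + 1$, i.e.\ $|V_{3-i}| - |V_i| = 1 - |L_i|$. If both $S_1$ and $S_2$ are nonempty, summing the $i=1,2$ instances cancels the left-hand side and delivers $|L_1|+|L_2|=2$.

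The remaining case is that some $S_i$ is empty, say $V_2 = L_2$. Then every vertex of $V_2$ is a leaf; since $V_1$ is independent and leaves cannot be interior vertices of any path, connectedness of $G$ forces all leaves in $V_2$ to share a common neighbor $c \in V_1$, and moreover $V_1 = \{c\}$. Thus $G = K_{1,|V_2|}$, and by Remark \ref{R.leafNotCutSet} the only cut sets are $\emptyset$ and $\{c\}$ (the latter only when $|V_2| \ge 2$), so the unmixedness identity $c_G(\{c\}) = |V_2| = 2$ forces $|V_2| \le 2$, placing $G$ among the edge or the path $P_3$, both with exactly two leaves. The only substantive point underlying the whole argument is the observation that a leaf cannot serve as an interior vertex of a path, which is what controls the component count of $G_{\overline{S_i}}$; once that is in hand, everything reduces to solving a $2 \times 2$ linear system.
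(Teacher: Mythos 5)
Your proof is correct and follows essentially the same route as the paper: take the cut sets $S_i = V_i \setminus L_i$, compute $c_G(S_i)=|V_{3-i}|$, and sum the two unmixedness equations to get $|L_1|+|L_2|=2$. The only difference is that you explicitly treat the degenerate case where some $S_i$ is empty (forcing $G$ to be an edge or $P_3$), a case the paper's proof passes over silently; this is a welcome but minor refinement, not a different argument.
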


\begin{proof}
Let $V(G)=V_1 \sqcup V_2$ be the bipartition of $G$, with $m_1=|V_1| \geq 1$ and $m_2=|V_2| \geq 1$. Assume that $G$ has exactly $h$ leaves, $f_1,\dots,f_h$, in $V_1$ and $k$ leaves, $g_1,\dots,g_k$, in $V_2$. We claim that $S_1=V_1 \setminus \{f_1,\dots,f_h\}$ and $S_2=V_2 \setminus \{g_1,\dots,g_k\}$ are cut sets of $G$. Notice that $c_G(S_1)=|V_2|=m_2$ and $c_G(S_1 \setminus \{v\})<c_G(S_1)$ since the vertex $v$ joins at least two connected components of $G_{\overline S_1}$. By symmetry, the claim is true for $S_2$ and, in particular $c_G(S_2)=|V_1|=m_1$. From the unmixedness of $J_G$ it follows that $\hgt(P_\emptyset(G))=\hgt(P_{S_1}(G))$ and $\hgt(P_\emptyset(G))=\hgt(P_{S_2}(G))$. Thus $n-1=n+|S_1|-c_G(S_1)=n+m_1-h-m_2$ and $n-1=n+|S_2|-c_G(S_2)=n+m_2-k-m_1$. Hence $h=m_1-m_2+1$ and $k=m_2-m_1+1$. The sum of the two equations yields $h+k=2$.
\end{proof}

\begin{remark} \label{unmixed cut vertices}
Assume that $G$ is bipartite and $J_G$ is unmixed. The proof of Proposition \ref{P.unmixed2leaves} implies that:
\begin{itemize}
  \item[(i)] either $h=2$ and $k=0$, i.e., the two leaves are in the same bipartition set and in this case $m_1=m_2+1$, or $h=1$ and $k=1$, i.e., each bipartition set contains exactly one leaf and in this case $m_1=m_2$;
  \item[(ii)] if $G$ has at least $4$ vertices, then the leaves cannot be attached to the same vertex $v$, otherwise $c_G(\{v\}) \geq 3 >2=|\{v\}|+1$, against the unmixedness of $J_G$, see \eqref{Eq.unmixedness}. Hence $G$ has at least two distinct cut vertices, which are the neighbours of the leaves.
\end{itemize}
\end{remark}

\begin{remark}
Notice that Proposition \ref{P.unmixed2leaves} does not hold if $G$ is not bipartite. In fact, there are non-bipartite graphs $G$ with an arbitrary number of leaves and such that $J_G$ is Cohen-Macaulay. For $n \geq 2$ the binomial edge ideal $J_{K_n}$ of the complete graph $K_n$ is Cohen-Macaulay, since it is the ideal of $2$-minors of a generic $(2 \times n)$-matrix (see \cite[Corollary 2.8]{BV88}). Moreover, for $n \geq 3$, $K_n$ has $0$ leaves. Let $W \subseteq [n]$, with $|W|=k \geq 1$. Adding a whisker to a vertex of $W$, the resulting graph $H$ has $1$ leaf and $J_H$ is Cohen-Macaulay by \cite[Theorem 2.7]{RR14}. Applying the same argument to all vertices of $W$, we obtain a graph $H'$ with $k$ leaves such that $J_{H'}$ is Cohen-Macaulay.
\end{remark}

In the remaining part of the section we present a construction, Proposition \ref{Semiconi}, that produces new examples of unmixed binomial edge ideals. It will also be important in the proof of the main theorem.

If $X$ is a subset of $V(G)$, we define the set of neighbours of the elements of $X$, denoted $N_G(X)$, or simply $N(X)$, as the set
\[
N_G(X)=\{y \in V(G) : \{x,y\} \in E(G) \text{ for some } x \in X\}.
\]

\begin{lemma} \label{maximal degree}
Let $G$ be a bipartite graph with bipartition $V_1 \sqcup V_2$, $J_G$ unmixed and let $v_1$ and $v_2$ be the neighbours of the leaves.
\begin{itemize}
\item[{\rm a)}] If $X \subseteq V_1 \setminus \{v_1, v_2\}$, then $N(X)$ is a cut set of $G$ and $|N(X)| \geq |X|$.
\item[{\rm b)}] If $\{v_1,v_2\} \in E(G)$, then $m=|V_1|=|V_2|$ and $v_i$ has degree $m$, for $i=1,2$. Moreover, $v_1$ and $v_2$ are the only cut vertices of $G$.
\end{itemize}
\end{lemma}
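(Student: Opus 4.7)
For part (a), set $S := N(X) \subseteq V_2$. Every $x \in X$ has all its neighbours contained in $S$, hence is an isolated vertex in $G_{\overline S}$. To see that $S$ is a cut set, fix $s \in S$: the key point is that $s$ cannot be a leaf of $G$, for then its unique neighbour $v_j \in V_1$ would be forced into $X$, contradicting $X \cap \{v_1, v_2\} = \emptyset$. Thus $\deg_G(s) \geq 2$, $s$ has at least two neighbours in $V_1 \subseteq \overline S$, at least one of them in $X$ and therefore isolated in $G_{\overline S}$, so putting $s$ back merges this singleton with another component and $c_G(S \setminus \{s\}) < c_G(S)$. For the inequality, unmixedness \eqref{Eq.unmixedness} gives $c_G(S) = |S|+1$; together with the $|X|$ singletons from $X$ and at least one further component supported on $\overline S \setminus X$ (containing any element of $\{v_1, v_2\} \cap V_1$), this yields $c_G(S) \geq |X|+1$ and hence $|N(X)| \geq |X|$.

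For part (b), bipartiteness together with $\{v_1, v_2\} \in E(G)$ forces $v_1, v_2$ into opposite parts; I label so that $v_2 \in V_1$ and $v_1 \in V_2$, whence $f_1 \in V_1$, $f_2 \in V_2$, and Remark \ref{unmixed cut vertices}(i) gives $m := |V_1| = |V_2|$. To show $\deg_G(v_1) = m$, I argue by contradiction: suppose $v_1 \not\sim x_0$ for some $x_0 \in V_1 \setminus \{v_2, f_1\}$, so $m \geq 3$. Consider the candidate $S^* := (V_2 \setminus \{v_1, f_2\}) \cup \{v_2\}$, of size $m-1$. Inspecting $G_{\overline{S^*}}$ one finds the components $\{f_2\}$, $C := \{v_1\} \cup (N_G(v_1) \setminus \{v_2\})$, and $m - \deg_G(v_1)$ isolated vertices of $V_1 \setminus N_G(v_1)$ (nonempty because $x_0$ lies in it). Hence $c_G(S^*) = m - \deg_G(v_1) + 2$; once $S^*$ is shown to be a cut set, unmixedness imposes $c_G(S^*) = m$, so $\deg_G(v_1) = 2$, contradicting $m \geq 3$. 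Symmetrically $\deg_G(v_2) = m$, and any further cut vertex $w \notin \{v_1, v_2\}$ is excluded because the degree conclusion and the edge $\{v_1, v_2\}$ keep $G \setminus w$ connected.

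The main technical obstacle is verifying that $S^*$ is a cut set of $G$, which reduces to showing that every $s \in V_2 \setminus \{v_1, f_2\}$ admits a neighbour in $V_1 \setminus N_G(v_1)$. I expect this to follow from another application of part (a) to a suitable choice of $X \subseteq V_1 \setminus \{v_1, v_2\}$, or from a Hall-type exchange that exploits the tight equalities $|N(X)| = |X|$ produced by part (a).
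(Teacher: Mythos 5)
Your part a) is essentially the paper's own argument: every vertex of $X$ is isolated in $G_{\overline{N(X)}}$, no vertex of $N(X)$ can be a leaf because the leaves' neighbours are excluded from $X$, and the count $c_G(N(X))\geq |X|+1$ against \eqref{Eq.unmixedness} gives $|N(X)|\geq |X|$. (Your witness for the extra component, ``any element of $\{v_1,v_2\}\cap V_1$,'' is empty when both leaves lie in $V_1$; the paper's wording is equally loose there, and in every later application one leaf lies in each class, so I will not press this.)

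Part b) is where you genuinely diverge from the paper, and where there is a real gap. Your whole contradiction hangs on $S^*=(V_2\setminus\{v_1,f_2\})\cup\{v_2\}$ being a cut set, and you leave this unproved. It does not reduce to part a): a vertex $s\in V_2\setminus\{v_1,f_2\}$ violates the cut-set condition for $S^*$ exactly when $N_G(s)\subseteq N_G(v_1)$, i.e.\ when $s$ has no neighbour among the isolated vertices $V_1\setminus N_G(v_1)$ of $G_{\overline{S^*}}$. Ruling this out for every such $s$ means showing $N_G\bigl(V_1\setminus N_G(v_1)\bigr)=V_2\setminus\{v_1,f_2\}$, and the inequality $|N(X)|\geq |X|$ from a) is far too weak to force this equality when $\deg_G(v_1)>2$. (Note also that when the lemma's conclusion holds, $S^*$ is \emph{not} a cut set, e.g.\ in $F_3$, so any proof must use the hypothesis $v_1\not\sim x_0$ in an essential way here.) A second, smaller gap: ``$\deg_G(v_1)=2$, contradicting $m\geq 3$'' is not a contradiction as written, since $\deg_G(v_1)=2$ is perfectly compatible with $v_1\not\sim x_0$; it can be repaired --- if $N_G(v_1)=\{f_1,v_2\}$ then $\{v_1,f_1\}$ and $\{f_2\}$ are two distinct components of $G_{\overline{\{v_2\}}}$, so $c_G(\{v_2\})\geq 3>2$ for $m\geq 3$, against \eqref{Eq.unmixedness} --- but you must supply that step. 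The paper's route avoids $S^*$ altogether: it takes $X$ to be the set of non-neighbours of $v_1$ in the opposite bipartition class (which avoids $v_2$ because $\{v_1,v_2\}\in E(G)$, so a) applies), and excludes both $|N(X)|=|X|$ and $|N(X)|>|X|$. For the first case it uses the auxiliary set $V_2\setminus(X\cup\{f\})$, whose cut-set property follows immediately from the definition of $X$ (every vertex of it is adjacent to $v_1$), and counts at least $m-|X|+1$ components against $|S|+1=m-|X|$; for the second it observes that $G_{\overline{N(X)}}$ has exactly $|X|+1$ components, since every surviving vertex is adjacent either to $v_1$ or to a vertex outside $X$. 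You would need either to prove the cut-set claim for $S^*$ under the hypothesis $\deg_G(v_1)<m$ --- which I do not see how to do short of redoing this analysis --- or to switch to an argument of that shape.
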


\begin{proof} a)  First notice that $N(X)$ is a cut set. In fact, every element of $X$ is isolated in $G_{\overline{N(X)}}$. Let $v \in N(X)$. Then $\deg(v) \geq 2$, since $v_1, v_2 \notin X$. Adding $v$ to $G_{\overline{N(X)}}$, it connects at least a vertex of $X$ with some other connected component.

Now, suppose by contradiction that $|N(X)| < |X|$. Then $G_{\overline{N(X)}}$ has at least $|X|$ isolated vertices and another connected component containing a leaf, because $v_1, v_2 \notin X$. Hence, $c_G(N(X)) \geq |X|+1 >|N(X)|+1$, against the unmixedness of $J_G$.

\medskip
b) Assume that $v_1 \in V_1$. Then $v_2 \in V_2$, since $\{v_1,v_2\} \in E(G)$. By Remark \ref{unmixed cut vertices}(i), it follows that $m=|V_1|=|V_2|$. Define $X=\{w \in V_2 : \{v_1,w \} \notin E(G) \}$ and assume that $X \neq \emptyset$. Since $\{v_1,v_2\} \in E(G)$, $v_2 \notin X$, hence $N(X)$ is a cut set and $|N(X)| \geq |X|$ by a). We claim that the inequality is strict. Assume $|N(X)|=|X|$. Let $f$ be the leaf of $G$ adjacent to $v_1$, then $S=V_2 \setminus (X \cup \{f\})$ is a cut set of $G$ and $|S|=m-|X|-1$. In fact, in $G_{\overline S}$ all vertices of $V_1 \setminus N(X)$ are isolated, except for $v_1$ that is connected only to $f$. Moreover, by definition of $X$, if we add an element of $S$ to $G_{\overline S}$, we join the connected component of $v_1$ with some other connected component of $G_{\overline{S}}$. Thus, $S$ is a cut set and $G_{\overline S}$ consists of at least $|V_1|-|N(X)|-1=m-|X|-1$ isolated vertices, the single edge $\{v_1,f\}$, and the connected component containing the vertices of $X$ and $N(X)$. Hence, $c_G(S) \geq m-|X|+1 > |S|+1$, a contradiction since $J_G$ is unmixed. This shows that $|N(X)|>|X|$.

Now, the vertices of $X$ are isolated in $G_{\overline{N(X)}}$. Moreover, the remaining vertices belong to the same connected component, because, by definition of $X$, $\{v_1,w\} \in E(G)$ for every $w \in V_2 \setminus X$ and all vertices in $V_1 \setminus N(X)$ are adjacent to vertices of $\overline X$. Hence, $c_G(N(X))=|X|+1<|N(X)|+1$, which again contradicts the unmixedness of $J_G$. Hence, $X= \emptyset$ and $v_1$ has degree $m$. In the same way it follows that $v_2$ has degree $m$.

For the last part of the claim, notice that if $v \in V(G) \setminus \{v_1,v_2\}$, the first part implies that every vertex of $G_{\overline{\{v\}}}$ is adjacent to either $v_1$ or $v_2$. Hence, $G_{\overline{\{v\}}}$ is connected and, thus, $v$ is not a cut vertex of $G$.
\end{proof}

\begin{remark}
Let $G$ be a bipartite graph such that $J_G$ is unmixed. If $G$ has exactly two cut vertices, they are not necessarily adjacent. Thus, the converse of the last part of Lemma \ref{maximal degree} b) does not hold. In fact, if $|V_1|=|V_2|+1$, then $v_1$ and $v_2$ belong to the same bipartition set, hence $\{v_1,v_2\} \notin E(G)$. On the other hand, if $|V_1|=|V_2|$, let $G$ be the graph in Figure \ref{A139}. One can check with \texttt{Macaulay2} \cite{Mac2} that the ideal $J_G$ is unmixed, and we notice that the vertices $2$ and $11$ are the only cut vertices, but $\{2,11\} \notin E(G)$.

\begin{figure}[ht!]
\begin{tikzpicture}[scale=1]
\node[label={above:$1$}] (a) at (0,1.5) {};
\node[label={above:$3$}] (b) at (1,1.5) {};
\node[label={above:$5$}] (c) at (2,1.5) {};
\node[label={above:$7$}] (d) at (3,1.5) {};
\node[label={above:$9$}] (e) at (4,1.5) {};
\node[label={above:$11$}] (f) at (5,1.5) {};
\node[label={below:$2$}] (g) at (0,0) {};
\node[label={below:$4$}] (h) at (1,0) {};
\node[label={below:$6$}] (i) at (2,0) {};
\node[label={below:$8$}] (l) at (3,0) {};
\node[label={below:$10$}] (m) at (4,0) {};
\node[label={below:$12$}] (n) at (5,0) {};
\draw (0,1.5) -- (0,0) -- (1,1.5) -- (1,0) -- (2,1.5) -- (2,0) -- (3,1.5) -- (3,0) -- (4,1.5) -- (4,0) -- (5,1.5) -- (5,0)
(2,1.5) -- (0,0)
(2,0) -- (1,1.5) -- (3,0) -- (5,1.5)
(3,0) -- (2,1.5) -- (4,0) -- (3,1.5)
(1,1.5) -- (4,0);
\end{tikzpicture}
\caption{} \label{A139}
\end{figure}
\end{remark}

\begin{proposition} \label{Semiconi}
Let $H$ be a bipartite graph with bipartition $V_1 \sqcup V_2$ and $|V_1|=|V_2|$. Let $v$ and $f$ be two new vertices and let $G$ be the bipartite graph with $V(G)=V(H) \cup \{v,f\}$ and $E(G)=E(H) \cup \{\{v,x\} : x \in V_1 \cup \{f\} \}$. If $J_H$ is unmixed and the neighbours of the leaves of $H$ are adjacent, then $J_G$ is unmixed and
\[
\mathcal M(G) = \{\emptyset, V_1\} \cup \{S \cup \{v\} : S \in \mathcal M(H) \} \cup \{T \subset V_1 : T \in \mathcal M(H)\}.
\]
Moreover, the converse holds if there exists $w \in V_1$ such that $\deg_G(w)=2$.
\end{proposition}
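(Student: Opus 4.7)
My plan is to first extract the rigid structure forced on $H$ by the hypothesis. Let $\ell_1 \in V_1$ and $\ell_2 \in V_2$ be the leaves of $H$, with neighbours $v_1 \in V_2$ and $v_2 \in V_1$; since $\{v_1,v_2\} \in E(H)$, Lemma~\ref{maximal degree}(b) gives $|V_1|=|V_2|=:m$ and forces $v_1$ to be complete to $V_1$ in $H$ and $v_2$ to be complete to $V_2$. In $G$ the bipartition is $(V_1 \cup \{f\}) \sqcup (V_2 \cup \{v\})$, both of size $m+1$; the two leaves of $G$ are $f$ and $\ell_2$, and their neighbours $v$ and $v_2$ are adjacent in $G$ because $v_2 \in V_1$.

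For the forward direction I would verify family-by-family that each $T$ on the right-hand side lies in $\mathcal M(G)$ and satisfies $c_G(T)=|T|+1$, which simultaneously gives the inclusion $\supseteq$ and unmixedness on these sets. The key combinatorial identity I rely on is that, whenever $v \notin T$, $c_G(T) = 1 + |\{w \in V_2 \setminus T : N_H(w) \subseteq T\}|$; this follows because $v$ is complete to $V_1$ in $G$ and $v_1$ is complete to $V_1$ in $H$, so every $V_1 \setminus T$-vertex, together with $v$, $f$ and the whole $v_1$-component of $H \setminus T$, collapses into one component of $G \setminus T$, while the remaining components are exactly the isolated $V_2 \setminus T$-vertices in $H \setminus T$. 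From this one obtains: (i) $c_G(V_1)=m+1$, with $V_1 \in \mathcal M(G)$ because every $u \in V_1$ has a $V_2$-neighbour in $H$; (ii) for $S \in \mathcal M(H)$, $G \setminus (S \cup \{v\})=(H \setminus S) \sqcup \{f\}$ gives $c_G(S \cup \{v\})=c_H(S)+1=|S|+2$, and the cut-set condition is inherited from $S$; (iii) for $T \subsetneq V_1$ with $T \in \mathcal M(H)$, $c_G(T)=c_H(T)=|T|+1$, again with cut-set condition inherited.

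For the reverse inclusion, I would take $T \in \mathcal M(G) \setminus \{\emptyset\}$ and split on whether $v \in T$. If $v \in T$, setting $S=T \setminus \{v\}$ and using $c_G((S \setminus u) \cup \{v\})=c_H(S \setminus u)+1$, the cut-set property of $T$ at each $u \in S$ forces $c_H(S \setminus u)<c_H(S)$, so $S \in \mathcal M(H)$. If $v \notin T$, I would rule out $T \cap V_2 \neq \emptyset$ by contradiction: for any $w \in T \cap V_2$, the identity above gives $c_G(T \setminus w)=c_G(T)$ when $N_H(w) \not\subseteq T$ (since $w$ merges into the big $v$-component) and $c_G(T \setminus w)=c_G(T)+1$ when $N_H(w) \subseteq T$ (since $w$ becomes a new isolated component), both contradicting the cut-set property. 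Hence $T \subseteq V_1$, and (iii) gives either $T=V_1$ or $T \in \mathcal M(H)$. Unmixedness of $J_G$ is then immediate since every cut set satisfies $c_G(T)=|T|+1$.

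For the converse, $\deg_G(w)=2$ with $w \in V_1$ forces $\deg_H(w)=1$, so $w$ is a leaf of $H$ in $V_1$, hence a free vertex, and $V_1 \setminus S' \neq \emptyset$ for every $S' \in \mathcal M(H)$. Then computation (ii) above still shows $S' \cup \{v\} \in \mathcal M(G)$, and the unmixedness of $J_G$ gives $|S'|+2 = c_H(S')+1$, so $c_H(S')=|S'|+1$ and $J_H$ is unmixed. For the adjacency of the leaves' neighbours, I would apply Lemma~\ref{maximal degree}(b) directly to $G$: its leaves are $f$ and the unique leaf $\ell_2$ of $H$ in $V_2$ (forced to exist by Proposition~\ref{P.unmixed2leaves} applied to $G$), their neighbours $v$ and $v_2$ are adjacent in $G$, and the lemma yields $\deg_G(v_2)=m+1$, so $v_2$ is joined in $H$ to every vertex of $V_2$, in particular to $v_1$; therefore $\{v_1,v_2\} \in E(H)$. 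The main obstacle is the case $v \notin T$, $T \cap V_2 \neq \emptyset$ of the reverse containment, whose resolution requires the component-count identity to be applied uniformly regardless of whether $V_1 \subseteq T$.
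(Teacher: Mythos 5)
Your proof is correct and follows essentially the same route as the paper's: direct verification of the inclusion $\supseteq$ together with the component counts, the case split on whether $v$ belongs to the cut set for $\subseteq$, and the converse via showing $S' \cup \{v\} \in \mathcal M(G)$ (using the leaf $w$ of $H$ in $V_1$) and then applying Lemma \ref{maximal degree} b) to $G$ itself. Your explicit identity $c_G(T)=1+|\{u \in V_2 \setminus T : N_H(u) \subseteq T\}|$ for $v \notin T$ is a pleasant streamlining that handles uniformly the subcase the paper treats separately (whether the $V_1$-leaf of $H$ lies in the cut set), but the decomposition and the key lemmas are the same.
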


\begin{proof}
Assume that $J_H$ is unmixed and the neighbours of the leaves of $H$ are adjacent. Clearly, $\emptyset, V_1 \in \mc M(G)$. If $S \in \mc M(H)$, then adding $v$ to $G_{\overline{S \cup \{v\}}}$ we join $f$ with some other connected component of $H_{\overline S}$. Moreover, if $w \in S$, adding $w$ to $G_{\overline{S \cup \{v\}}}$ we join at least two connected components of $H_{\overline S}$ (since $S \in \mc M(H)$), which are different components of $G_{\overline{S \cup \{v\}}}$. Finally, let $T \in \mc M(H)$, $T \subset V_1$. By Lemma \ref{maximal degree} b), in $H$ there exists a unique cut vertex $v_2 \in V_2$ and $N_H(v_2)=V_1$. Hence, adding $w \in T$ to $G_{\overline T}$, we join at least two components since $N_G(v)=V_1 \cup \{f\}$ and $T \in \mc M(H)$.

Conversely, let $S \in \mc M(G)$ and suppose first that $v \in S$. Then $G_{\overline S} = H_{\overline{S \setminus \{v\}}} \sqcup \{f\}$ and this implies that $S \setminus \{v\}$ is a cut set of $H$, since every element of $S \setminus \{v\}$ has to join some connected components that only contain vertices of $H_{\overline{S \setminus \{v\}}}$. Therefore $c_G(S)=c_H(S \setminus \{v\})+1=|S|+1$.

Suppose now that $v \notin S$. Let $w$ be the leaf of $H$ adjacent to $v_2$, that is also adjacent to $v$ in $G$. First of all, notice that $S \subset V_1$. Indeed, in $G_{\overline S}$ every vertex of $V_1 \setminus S$ is in the same connected component of $v$. Thus, a vertex of $V_2$ cannot join different connected components. Since $w$ is adjacent only to $v$ and $v_2$, if $w \in S$, then $v$ and $v_2$ cannot be in the same connected component of $G_{\overline S}$. This means that $V_1 \subset S$, because all the vertices of $V_1$ are adjacent to $v$ and $v_2$, by Lemma \ref{maximal degree} b). Thus $S=V_1$ and $c_G(S)=|V_2|+1=|S|+1$. Hence, we may assume that $w \notin S$. We claim that, in this case, $S \in \mc M(H)$. In fact, it is clear that $v_2,w,v$ and $f$ are in the same connected component $C$ of $G_{\overline S}$, which also contains all vertices of $V_1 \setminus S$, since they are adjacent to $v$. Then, the connected components of $G_{\overline S}$ and $H_{\overline S}$ are the same except for $C$, that in $H_{\overline S}$ is $C_{\overline{\{v,f\}}} \neq \emptyset$. Therefore, if $x \in S$ joins two connected components of $G_{\overline S}$, it also joins the same connected components of $H_{\overline S}$ (or $C_{\overline{\{v,f\}}}$, if it joins $C$), hence $S$ is a cut set of $H$. Moreover, $c_G(S)=c_H(S)=|S|+1$.

Conversely, assume that $J_G$ is unmixed and let $S \in \mc M(H)$. Notice that $w$ is a leaf of $H$, hence $w \notin S$, by Remark \ref{R.leafNotCutSet}. We prove that $T=S \cup \{v\}$ is a cut set of $G$. As before, $G_{\overline T}=H_{\overline S} \cup \{f\}$. Thus the elements of $S$ join different connected components also in $G_{\overline T}$ and $v$ connects the isolated vertex $f$ with the connected component of $w$. Hence, $T \in \mc M(G)$ and $c_H(S)=c_G(T)-1=|T|+1-1=|S|+1$.

Finally, let $v_i$ be the cut vertex of $H$ in $V_i$ for $i=1,2$. Since $\{v, v_1\} \in E(G)$, it follows, from Lemma \ref{maximal degree} b), that $\{v_1,v_2\} \in E(G)$. Then $v_1$ and $v_2$ are adjacent also in $H$.
\end{proof}

In Figure \ref{F.semicono}, we show an example of the above construction. The ideal $J_G$ is unmixed by Proposition \ref{Semiconi}, since $H=M_{4,4}$ and $J_H$ is unmixed by Example \ref{E.unmixedNotCM1}. Moreover, it will follow from Example \ref{E.unmixedNotCM2} and Proposition \ref{P.preconoconnesso} that $J_G$ is not Cohen-Macaulay.

\begin{figure}[ht!]
\begin{tikzpicture}
\node[lightgray] (a) at (0,1.5) {};
\node[lightgray] (b) at (1,1.5) {};
\node[lightgray] (c) at (2,1.5) {};
\node[lightgray] (d) at (3,1.5) {};
\draw[lightgray] (0,1.5) -- (0,0) -- (1,1.5) -- (1,0) -- (2,1.5) -- (2,0) -- (3,1.5) -- (3,0)
(2,1.5) -- (0,0) -- (3,1.5) -- (1,0)
(1,1.5) -- (2,0);
\node (e) at (0,0) {};
\node (f) at (1,0) {};
\node (g) at (2,0) {};
\node (h) at (3,0) {};
\node[label={right:$f$}] (j) at (4,0) {};
\node[label={right:$v$}] (k) at (4,1.5) {};
\draw (0,0) -- (4,1.5) -- (1,0)
(2,0) -- (4,1.5) -- (3,0)
(4,0) -- (4,1.5);
\end{tikzpicture}
\caption{} \label{F.semicono}
\end{figure}

In Proposition \ref{Semiconi}, the existence of a vertex $w \in V_1$ such that $\deg_G(w)=2$ means that $w$ is a leaf of $H$. This is not true in general, see for instance the graph $M_{k,k}$ in Example \ref{E.unmixedNotCM1} for $k \geq 4$. However, if $J_H$ is unmixed, this always holds:

\begin{corollary} \label{C.unmixedSemicone}
Let $H$ be a bipartite graph with bipartition $V_1 \sqcup V_2$, $|V_1|=|V_2|$ and such that $J_H$ is unmixed. Let $G$ be the graph in Proposition {\rm \ref{Semiconi}}. Then $J_G$ is unmixed if and only if the neighbours of the leaves of $H$ are adjacent.
\end{corollary}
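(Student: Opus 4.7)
The plan is to derive the corollary directly from Proposition \ref{Semiconi} together with Proposition \ref{P.unmixed2leaves}. The ``if'' direction is immediate: if the neighbours of the leaves of $H$ are adjacent then, since $J_H$ is assumed unmixed, the first part of Proposition \ref{Semiconi} gives that $J_G$ is unmixed.

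For the ``only if'' direction, assume $J_G$ is unmixed. I would invoke the converse statement in Proposition \ref{Semiconi}, whose additional hypothesis is the existence of some $w \in V_1$ with $\deg_G(w)=2$. To produce such a $w$, I use that $J_H$ is unmixed: by Proposition \ref{P.unmixed2leaves}, $H$ has exactly two leaves, and by Remark \ref{unmixed cut vertices}(i) the standing equality $|V_1|=|V_2|$ forces one leaf to lie in $V_1$ and the other in $V_2$ (the alternative $m_1=m_2\pm 1$ is excluded). Let $w \in V_1$ be the leaf of $H$. Since the construction of $G$ only adds the single new edge $\{v,w\}$ incident to $w$, we have $\deg_G(w)=\deg_H(w)+1=2$. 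The converse part of Proposition \ref{Semiconi} then yields that the neighbours of the leaves of $H$ are adjacent, as required.

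Thus the corollary reduces to a short bookkeeping argument: the only ingredient beyond Proposition \ref{Semiconi} is the structural fact that unmixedness of $J_H$ together with $|V_1|=|V_2|$ forces $H$ to have a leaf in each bipartition set, which in turn guarantees the degree-two vertex in $V_1$ needed to apply the converse. There is no real technical obstacle; the content of the corollary lies precisely in observing that the ``$\deg_G(w)=2$'' hypothesis in Proposition \ref{Semiconi} is automatically satisfied in the balanced bipartite setting once $J_H$ is unmixed.
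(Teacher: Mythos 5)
Your proof is correct and matches the paper's intended argument: the corollary is stated immediately after the observation that a vertex $w\in V_1$ with $\deg_G(w)=2$ is the same thing as a leaf of $H$ lying in $V_1$, and that unmixedness of $J_H$ together with $|V_1|=|V_2|$ guarantees such a leaf via Proposition \ref{P.unmixed2leaves} and Remark \ref{unmixed cut vertices}(i). Both directions then follow from Proposition \ref{Semiconi} exactly as you describe.
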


\begin{example}
The graph $H$ of Figure \ref{A139} is such that $J_H$ is unmixed, but the two cut vertices $2$ and $11$ are not adjacent. The graph in Figure \ref{F.counterexample2} is the graph $G$ obtained from $H$ with the construction in Proposition \ref{Semiconi}. According to Corollary \ref{C.unmixedSemicone}, $J_G$ is not unmixed: in fact $S=N(11)=\{8,10,12\}$ is a cut set and $c_G(S)=3 \neq |S|+1$.

\begin{figure}[ht!]
\begin{tikzpicture}
\node[lightgray,label={above:$1$}] (a) at (0,1.5) {};
\node[lightgray,label={above:$3$}] (b) at (1,1.5) {};
\node[lightgray,label={above:$5$}] (c) at (2,1.5) {};
\node[lightgray,label={above:$7$}] (d) at (3,1.5) {};
\node[lightgray,label={above:$9$}] (e) at (4,1.5) {};
\node[lightgray,label={above:$11$}] (f) at (5,1.5) {};
\draw[lightgray] (0,1.5) -- (0,0) -- (1,1.5) -- (1,0) -- (2,1.5) -- (2,0) -- (3,1.5) -- (3,0) -- (4,1.5) -- (4,0) -- (5,1.5) -- (5,0)
(2,1.5) -- (0,0)
(2,0) -- (1,1.5) -- (3,0) -- (5,1.5)
(3,0) -- (2,1.5) -- (4,0) -- (3,1.5)
(1,1.5) -- (4,0);
\node[label={below:$2$}] (h) at (0,0) {};
\node[label={below:$4$}] (i) at (1,0) {};
\node[label={below:$6$}] (l) at (2,0) {};
\node[label={below:$8$}] (m) at (3,0) {};
\node[label={below:$10$}] (n) at (4,0) {};
\node[label={below:$12$}] (o) at (5,0) {};
\node[label={above:$13$}] (g) at (6,1.5) {};
\node[label={below:$14$}] (p) at (6,0) {};
\draw (0,0) -- (6,1.5) -- (1,0) -- (6,1.5) -- (2,0) -- (6,1.5) -- (3,0) -- (6,1.5) -- (4,0) -- (6,1.5) -- (5,0) -- (6,1.5) -- (6,0);
\end{tikzpicture}
\caption{} \label{F.counterexample2}
\end{figure}
\end{example}

\section{Basic blocks} \label{S.basicBlocks}

In this section we study the basic blocks $F_m$ of our classification, proving that $J_{F_m}$ is Cohen-Macaulay.

\medskip
In what follows we will use several times the following argument.

\begin{remark}\label{R.splittingPrimaryDecomposition}
Let $G$ be a graph, $v$ be a vertex of $G$, $H'=G \setminus \{v\}$ and assume that $\mc M(H')=\{S \setminus \{v\} : S \in \mc M(G), v \in S\}$, in particular $v$ is a cut vertex of $G$ since $\emptyset \in \mc M(H')$. Let $J_G=\bigcap_{S \in \mathcal M(G)} P_S(G)$ be the primary decomposition of $J_G$ and set $A=\bigcap_{S \in \mathcal M(G), v \notin S} P_S(G)$ and $B=\bigcap_{S \in \mathcal M(G), v \in S} P_S(G)$. Then $J_G = A \cap B$ and we have the short exact sequence
\begin{equation} \label{Eq.shortExactSeq}
0 \longrightarrow R/J_G \longrightarrow R/A \oplus R/B \longrightarrow R/(A+B) \longrightarrow 0.
\end{equation}
Notice that
\begin{itemize}
  \item[i)] $A=J_H$, where $H$ is the graph obtained from $G$ by adding all possible edges between the vertices of $N_G(v)$. In other words, $V(H)=V(G)$ and $E(H)=E(G) \cup \{\{k,\ell\} : k,\ell \in N_G(v), k \neq \ell\}$. In fact, notice that $v \notin S$ for every $S \in \mc M(H)$ by Remark \ref{R.leafNotCutSet} and all cut sets of $G$ not containing $v$ are cut sets of $H$ as well. Thus, $\mathcal M(H)= \{S \in \mathcal M(G) : v \notin S\}$. Moreover, for every $S \in \mc M(H)$, the connected components of $G_{\overline S}$ and $H_{\overline S}$ are the same, except for the component containing $v$, which is $G_i$ in $G_{\overline S}$ and $H_i$ in $H_{\overline S}$. Nevertheless, $\widetilde G_i = \widetilde H_i$, hence $P_S(G)=P_S(H)$ for every $S \in \mathcal M(H)$.
  \item[ii)] $B=(x_v,y_v)+J_{H'}$, where $H'=G \setminus \{v\}$. In fact, if $S \in \mathcal M(G)$ with $v \in S$, then $S \setminus \{v\} \in \mc M(H')$ by assumption and we have that $P_S(G)=(x_v,y_v)+P_{S \setminus \{v\}}(H')$. Thus,
      \[
      B=(x_v,y_v)+\bigcap_{S \in \mathcal M(G), v \in S} P_{S \setminus \{v\}}(H')= (x_v,y_v)+\bigcap_{T \in \mathcal M(H')} P_T(H')=(x_v,y_v)+J_{H'}.
      \]
  \item[iii)] $A+B=(x_v,y_v)+J_{H''}$, where $H''=H \setminus \{v\}$.
\end{itemize}
\end{remark}

We now describe a new family of Cohen-Macaulay binomial edge ideals associated with non-bipartite graphs, which will be useful in what follows.
Let $K_n$ be the complete graph on the vertex set $[n]$ and $W=\{v_1,\dots,v_r\} \subseteq [n]$. Let $H$ be the graph obtained from $K_n$ by attaching, for every $i=1,\dots,r$, a complete graph $K_{h_i}$ to $K_n$ in such a way that $V(K_n) \cap V(K_{h_i}) = \{v_1,\dots,v_i\}$, for some $h_i>i$. We say that the graph $H$ is obtained by \textit{adding a fan to $K_n$ on the set $S$}. For example, Figure \ref{F.fan} shows the result of adding a fan to $K_6$ on a set $S$ of three vertices.

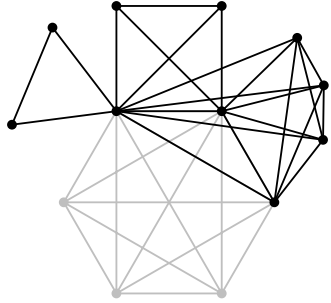
\begin{figure}[ht!]
\begin{tikzpicture}[scale=1.4,line join=round]
\draw[lightgray] (0.,0.)-- (1.,0.)-- (1.5,0.8660254037844387)-- (1.,1.7320508075688776)-- (0.,1.7320508075688779)-- (-0.5,0.8660254037844395)-- (0.,0.)-- (0.,1.7320508075688779)-- (1.5,0.8660254037844387)-- (0,0)-- (1.,1.7320508075688776)-- (1,0)-- (-0.5,0.8660254037844395)-- (1.,1.7320508075688776);
\draw[lightgray] (-0.5,0.8660254037844395)-- (1.5,0.8660254037844387);
\draw[lightgray] (1,0)-- (0.,1.7320508075688779);
\draw (1.,1.7320508075688776)-- (0.,1.7320508075688779);
\draw (1.5,0.8660254037844387)-- (1.,1.7320508075688776);
\draw (0.,1.7320508075688779)-- (0.,2.7320508075688776);
\draw (0.,2.7320508075688776)-- (1.,2.7320508075688776);
\draw (1.,2.7320508075688776)-- (1.,1.7320508075688776);
\draw (0.,1.7320508075688779)-- (1.,2.7320508075688776);
\draw (0.,2.7320508075688776)-- (1.,1.7320508075688776);
\draw (-0.9917272617078157,1.6036878908619228)-- (0.,1.7320508075688779);
\draw (-0.607029177625997,2.526730351479947)-- (-0.9917272617078157,1.6036878908619228);
\draw (-0.607029177625997,2.526730351479947)-- (0.,1.7320508075688779);
\draw (1.7169050039063332,2.4292216728336706)-- (1.,1.7320508075688776);
\draw (1.9694433781154632,1.9773659857134)-- (1.,1.7320508075688776);
\draw (1.9622201820508556,1.4597782947182076)-- (1.,1.7320508075688776);
\draw (1.5,0.8660254037844387)-- (1.9622201820508556,1.4597782947182076);
\draw (1.9694433781154632,1.9773659857134)-- (1.5,0.8660254037844387);
\draw (1.7169050039063332,2.4292216728336706)-- (1.5,0.8660254037844387);
\draw (1.7169050039063332,2.4292216728336706)-- (1.9622201820508556,1.4597782947182076);
\draw (1.9694433781154632,1.9773659857134)-- (1.9622201820508556,1.4597782947182076);
\draw (1.9694433781154632,1.9773659857134)-- (1.7169050039063332,2.4292216728336706);
\draw (1.7169050039063332,2.4292216728336706)-- (0.,1.7320508075688779);
\draw (1.9694433781154632,1.9773659857134)-- (0.,1.7320508075688779);
\draw (0.,1.7320508075688779)-- (1.9622201820508556,1.4597782947182076);
\draw (0.,1.7320508075688779)-- (1.5,0.8660254037844387);
\node[lightgray] (a) at (0.,0.) {};
\node[lightgray] (b) at (1.,0.) {};
\node (c) at (1.5,0.8660254037844387) {};
\node (d) at (1.,1.7320508075688776) {};
\node (e) at (0.,1.7320508075688779) {};
\node[lightgray] (f) at (-0.5,0.8660254037844395) {};
\node (g) at (0.,2.7320508075688776) {};
\node (h) at (1.,2.7320508075688776) {};
\node (i) at (-0.607029177625997,2.526730351479947) {};
\node (j) at (-0.9917272617078157,1.6036878908619228) {};
\node (k) at (1.7169050039063332,2.4292216728336706) {};
\node (l) at (1.9694433781154632,1.9773659857134) {};
\node (m) at (1.9622201820508556,1.4597782947182076) {};
\end{tikzpicture}
\caption{Adding a fan to $K_6$ on three vertices}\label{F.fan}
\end{figure}

\begin{lemma} \label{L.fanOnCompleteGraphCM}
Let $K_n$ be the complete graph on $[n]$ and $W_1 \sqcup \cdots \sqcup W_k$ be a partition of a subset $W \subseteq [n]$. Let $G$ be the graph obtained from $K_n$ by adding a fan on each set $W_i$. Then $J_G$ is Cohen-Macaulay.
\end{lemma}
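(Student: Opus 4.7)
The plan is to prove Cohen-Macaulayness of $J_G$ by induction on $|W| := \sum_i |W_i|$. In the base case $|W|=0$ the graph is $G = K_n$, and $J_{K_n}$ is Cohen-Macaulay as the ideal of $2$-minors of a generic $2\times n$ matrix.

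For the inductive step I would fix a fan (say on $W_1$, with $r=|W_1|\geq 1$) and choose $v:=v_1^{(1)}$, its first base vertex. The critical preliminary step is a combinatorial description of $\mathcal{M}(G)$: since every vertex outside $W$ is free in $G$, Remark~\ref{R.leafNotCutSet} forces any $S\in\mathcal{M}(G)$ to be contained in $W$, and counting connected components of $G_{\overline{S}}$ (a clique attached to the fan on $W_j$ becomes a separate component precisely when all of its base $\{v_1^{(j)},\dots,v_i^{(j)}\}$ lies in $S$) one obtains
\[
\mathcal{M}(G)=\left\{\bigsqcup_{i=1}^{k}\{v_1^{(i)},\dots,v_{a_i}^{(i)}\} \;:\; 0\le a_i\le |W_i|\right\},
\]
with $c_G(S)=|S|+1$ for every such $S$. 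Hence $J_G$ is unmixed, $v$ is a cut vertex, and $\mathcal{M}(H')=\{S\setminus\{v\}:S\in\mathcal{M}(G),\,v\in S\}$ for $H':=G\setminus\{v\}$, so the splitting of Remark~\ref{R.splittingPrimaryDecomposition} applies at $v$.

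Applying that remark yields
\[
0 \longrightarrow R/J_G \longrightarrow R/J_H\oplus R/B \longrightarrow R/(J_H+B) \longrightarrow 0, \qquad B=(x_v,y_v)+J_{H'},
\]
with $H$ and $H'':=H\setminus\{v\}$ as in Remark~\ref{R.splittingPrimaryDecomposition}. The neighbourhood $N_G(v)$ consists of $[n]\setminus\{v\}$ together with the new vertices of every clique of the $W_1$-fan, so completing it into a clique merges the entire $W_1$-fan into an enlarged complete graph; hence $H$ is of the same form as $G$ but with the $W_1$-fan absorbed, in particular with $|W|_H<|W|$. The graph $H'$ splits as the disjoint union of a complete graph $K_{h_1^{(1)}-1}$ (the new vertices of the first clique of the $W_1$-fan) and a connected graph of the same fan form with $|W|$ decreased by one. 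Finally $H''$ is again of the fan form with strictly smaller $|W|$. By the inductive hypothesis, together with the fact that Cohen-Macaulayness is preserved under disjoint union of graphs on disjoint variable sets, $R/J_H$, $R/J_{H'}$ and $R/(J_H+B)$ are all Cohen-Macaulay.

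Finally, using unmixedness and the component counts above, the dimensions line up as $\dim R/J_G=\dim R/J_H=\dim R/B=|V(G)|+1$ and $\dim R/(J_H+B)=|V(G)|$, and the standard depth lemma applied to the short exact sequence then forces $R/J_G$ to be Cohen-Macaulay. I expect the main technical obstacle to be the combinatorial identification of $\mathcal{M}(G)$---in particular the observation that within each fan the part of a cut set in $W_i$ must be an initial segment $\{v_1^{(i)},\dots,v_{a_i}^{(i)}\}$---since everything else flows mechanically from Remark~\ref{R.splittingPrimaryDecomposition} and the depth lemma.
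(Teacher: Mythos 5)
Your strategy is essentially the paper's: an explicit description of $\mathcal M(G)$ as unions of initial segments of the $W_i$, a splitting of the primary decomposition at the first base vertex of a fan via Remark \ref{R.splittingPrimaryDecomposition}, and the Depth Lemma applied to the resulting short exact sequence. The one genuine organizational difference is your induction parameter: you induct on $|W|=\sum_i|W_i|$ with base case $K_n$, which lets you treat every case uniformly, whereas the paper runs a double induction on $k$ and $|W_k|$ and disposes of the case $k=1$ separately by writing $G$ as a cone over a disjoint union and invoking \cite[Theorems 2.7 and 3.8]{RR14}. Your version avoids those external results entirely, which is a mild simplification; your verifications that $H$, $H'$ and $H''$ are again of fan form (or disjoint unions of such with complete graphs) with strictly smaller $|W|$ are correct, as are the dimension counts feeding the Depth Lemma.

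There is one concrete inaccuracy in your description of $\mathcal M(G)$: you must also require $\bigsqcup_i\{v_1^{(i)},\dots,v_{a_i}^{(i)}\}\subsetneq[n]$. When $W=[n]$ (which the hypothesis allows, and which is exactly the case needed later in Proposition \ref{P.type1CM}), the set $S=[n]$ appears in your list but is not a cut set, since $c_G([n])=c_G([n]\setminus\{v_{r_i}^{(i)}\})$ for every $i$; moreover $c_G([n])=n=|S|$, so your claim that $c_G(S)=|S|+1$ for every listed $S$ would contradict unmixedness if $[n]$ really were a cut set. The fix is trivial (exclude the full vertex set), and with it the identity $\mathcal M(H')=\{S\setminus\{v\}:S\in\mathcal M(G),\,v\in S\}$ and the rest of the argument go through unchanged. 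You should also actually prove the initial-segment property ($v_j^{(i)}\in S$ forces $v_h^{(i)}\in S$ for $h<j$), which your sketch only gestures at: the paper's argument is that every maximal clique of $G$ containing $v_j^{(i)}$ also contains $v_h^{(i)}$, so removing $v_j^{(i)}$ from such an $S$ cannot decrease the number of components.
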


\begin{proof}
First we show that $J_G$ is unmixed. For every $i=1,\dots,k$, set $W_i=\{v_{i,1},\dots,v_{i,r_i}\}$ and $\mathcal M_i=\{\emptyset\} \cup \{\{v_{i,1},\dots,v_{i,h}\} : 1 \leq h \leq r_i\}$. We claim that
\begin{equation}\label{Eq.cutSetsFan}
\mathcal M(G) = \{T_1 \cup \cdots \cup T_k : T_i \in \mathcal M_i, T_1 \cup \cdots \cup T_k \subsetneq [n] \}.
\end{equation}

Let $T=T_1 \cup \cdots \cup T_k \neq \emptyset$, with $T_i \in \mathcal M_i$ for $i=1,\dots,k$, and $T \subsetneq [n]$. Let $v \in T$. Then $v \in T_j$ for some $j$, say $v=v_{j,\ell}$, with $1 \leq \ell \leq r_j$. Hence, if we add $v$ to the graph $G_{\overline T}$, it joins the connected component containing $K_n \setminus T$ (which is non-empty since $T \subsetneq [n]$) with $K_{h_{j,\ell}} \setminus T$, where $V(K_{h_{j,\ell}}) \cap V(G) = \{v_{j,1},\dots,v_{j,\ell}\}$. This shows that $c_G(T)>c_G(T \setminus \{v\})$ for every $v \in T$, thus $T \in \mathcal M(G)$.

Conversely, let $T \in \mathcal M(G)$. First notice that $T \neq [n]$, since $c_G([n])=c_G([n] \setminus \{v_{i,r_i}\})$ for every $i$. Moreover, $T$ does not contain any vertex $v \in V(G) \setminus \bigcup_{i=1}^k W_i$, otherwise $v$ belongs to exactly one maximal clique of $G$, see Remark \ref{R.leafNotCutSet}. Then $c_G(T)=c_G(T \setminus \{v\})$. Hence $T \subseteq \bigcup_{i=1}^k W_i$ and $T \subsetneq [n]$. Let $T = \bigcup_{i=1}^k T_i$, where $T_i \subseteq W_i$. We want to show that, if $v_{i,j} \in T_i \subseteq T$, then $v_{i,h} \in T$ for every $1 \leq h < j$. Assume $v_{i,h} \notin T_i$ for some $h < j$. Then $c_G(T)=c_G(T \setminus \{v_{i,j}\})$ because all maximal cliques of $G$ containing $v_{i,j}$ contain $v_{i,h}$ as well, since $h<j$. This shows that $T_i \in \mathcal M_i$ for every $i$.

Finally, for every $T \in \mathcal M(G)$, since $G_{\overline T}$ consists of  $|T|$ connected components that are complete graphs ($K_{h_{j,\ell}} \setminus T$ for every $j=1,\dots,k$ and $\ell=1,\dots,|T_j|$) and a graph obtained from $K_n \setminus T$ by adding a fan on each $W_i \setminus T_i$, it follows that $c_G(T)=|T|+1$. This means that $J_G$ is unmixed and $\dim(R/J_G) = |V(G)|+1$.

In order to prove that $J_G$ is Cohen-Macaulay, we proceed by induction on $k \geq 1$ and $|S_k| \geq 1$. Let $k=1$ and set $W_1=\{1,\dots,r\}$. If $|W_1|=1$, then the claim follows by \cite[Theorem 2.7]{RR14}. Assume that $|W_1|=r \geq 2$ and the claim true for $r-1$. Notice that $G=\mathrm{cone}(1,G_1 \sqcup G_2)$, where $G_1 \cong K_{h_1-1}$ (graph isomorphism) and $G_2$ is the graph obtained from $K_n \setminus \{1\}$ by adding a fan on the clique $\{2,\dots,r\}$. We know that $J_{G_1}$ is Cohen-Macaulay by \cite[Corollary 2.8]{BV88} and $J_{G_2}$ is Cohen-Macaulay by induction. Hence, the claim follows by \cite[Theorem 3.8]{RR14}.

Now, let $k \geq 2$ and assume the claim true for $k-1$. Again, if $|W_k|=1$, the claim follows by induction and by \cite[Theorem 2.7]{RR14}. Assume that $|W_k|=r_k \geq 2$ and the claim true for $r_k-1$. For simplicity, let $W_k=\{1,\dots,r_k\}$.
Let $J_G=\bigcap_{S \in \mathcal M(G)} P_S(G)$ be the primary decomposition of $J_G$ and set $A=\bigcap_{S \in \mathcal M(G), 1 \notin S} P_S(G)$ and $B=\bigcap_{S \in \mathcal M(G), 1 \in S} P_S(G)$. Then $J_G=A \cap B$.

By Remark \ref{R.splittingPrimaryDecomposition}, $A=J_H$, where $H$ is a complete graph on the vertices of $\{1\} \cup N_G(1)$ to which we add a fan on the cliques $W_1,\dots,W_{k-1}$. Hence $R/A$ is Cohen-Macaulay by induction on $k$ and $\depth(R/A)=|V(G)|+1$.

Notice that $H'=G \setminus \{1\}$ is the disjoint union of a complete graph and a graph $K'$, which is obtained by adding a fan to $K_n \setminus \{1\} \cong K_{n-1}$ on the cliques $W_1,\dots,W_{k-1}$ and $W_k \setminus \{1\}$. From \eqref{Eq.cutSetsFan}, it follows that $\mc M(H')=\{S \setminus \{1\} : S \in \mc M(G), 1 \in S\}$, thus $B=(x_1,y_1)+J_{H'}$ by Remark \ref{R.splittingPrimaryDecomposition}. By induction on $|W_k|$, $J_{K'}$ is Cohen-Macaulay, hence $J_{H'}$ is Cohen-Macaulay since it is the sum of Cohen-Macaulay ideals on disjoint sets of variables. In particular, $\depth(R/B)=|V(H')|+2=|V(G)|+1$ (it follows from the formula for the dimension \cite[Corollary 3.4]{HHHKR10}).

Finally, by Remark \ref{R.splittingPrimaryDecomposition}, $A+B=(x_1,y_1)+J_{H''}$, where $H''=H \setminus \{1\}$. Hence $R/(A+B)$ is Cohen-Macaulay by induction on $k$ and $\depth(R/(A+B))=|V(G)|$.

The Depth Lemma \cite[Lemma 3.1.4]{V94} applied to the short exact sequence \eqref{Eq.shortExactSeq} yields $\depth(R/J_G)=|V(G)|+1$. The claim follows from the first part, since $\dim(R/J_G)=|V(G)|+1$.
\end{proof}

Notice that the graphs produced by Lemma \ref{L.fanOnCompleteGraphCM} are not generalized block graphs (see \cite{KM15}) nor closed graphs if $k \geq 2$ (studied in \cite{EHH11}). Hence they form a new family of non-biparite graphs whose binomial edge ideal is Cohen-Macaulay.

Now we prove that the binomial edge ideals of the graphs $F_m$ (see Figure \ref{F.exampleF_m}) are Cohen-Macaulay. The graphs $F_m$ are the basic blocks in our classification, Theorem \ref{T.CHARACTERIZATION}.

Recall that, for every $m \geq 1$, if $n=2m$, $F_m$ is the graph on the vertex set $[n]$ and with edge set
\[
E(F_m) = \left\{ \{2i,2j-1\} : i=1,\dots,m, j=i,\dots,m \right\}.
\]

Notice that $F_m$, with $m \geq 2$, can be obtained from $F_{m-1}$ using the construction of Proposition \ref{Semiconi}.

\begin{proposition}\label{P.type1CM}
For every $m \geq 1$, $J_{F_m}$ is Cohen-Macaulay.
\end{proposition}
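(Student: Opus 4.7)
The plan is to proceed by induction on $m$, with base case $m=1$ immediate since $J_{F_1}$ is a principal binomial ideal in a polynomial ring and hence Cohen-Macaulay. For $m \geq 2$, I would exploit the excerpt's observation that $F_m$ arises from $F_{m-1}$ via the Semiconi construction of Proposition~\ref{Semiconi}, with the new ``apex'' vertex playing the role of $2m-1$ and the new leaf being $2m$, so that the bipartition set $V_1$ corresponds to $\{2,4,\dots,2m-2\}$. The two cut vertices of $F_{m-1}$, namely $2$ and $2m-3$, are adjacent in $F_{m-1}$ by its very definition, so Proposition~\ref{Semiconi} yields an explicit description of $\mathcal{M}(F_m)$ and Corollary~\ref{C.unmixedSemicone} ensures that $J_{F_m}$ is unmixed.

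I would then apply Remark~\ref{R.splittingPrimaryDecomposition} at the vertex $v = 2m-1$. Its hypothesis $\mathcal{M}(F_m \setminus \{2m-1\}) = \{S \setminus \{2m-1\} : S \in \mathcal{M}(F_m),\, 2m-1 \in S\}$ follows from the description in Proposition~\ref{Semiconi}: the cut sets of $F_m$ containing $2m-1$ are precisely $\{S \cup \{2m-1\} : S \in \mathcal{M}(F_{m-1})\}$, while $F_m \setminus \{2m-1\}$ is the disjoint union of $F_{m-1}$ with the isolated vertex $2m$, whose cut sets coincide with $\mathcal{M}(F_{m-1})$. The resulting short exact sequence is
\[
0 \longrightarrow R/J_{F_m} \longrightarrow R/A \oplus R/B \longrightarrow R/(A+B) \longrightarrow 0.
\]
A direct inspection of the edges then shows: $A = J_H$, where $H$ is the clique $K_{m+1}$ on $\{2,4,\dots,2m,2m-1\}$ with a single fan attached on $W=\{2,4,\dots,2m-2\}$, the odd vertex $2j-1$ being attached to the prefix $\{2,4,\dots,2j\}$ for each $j=1,\dots,m-1$; $B = (x_{2m-1},y_{2m-1}) + J_{F_{m-1}}$, since the isolated vertex $2m$ contributes nothing to the ideal; and $A+B = (x_{2m-1},y_{2m-1}) + J_{H''}$, where $H'' = H \setminus \{2m-1\}$ is $K_m$ on $\{2,4,\dots,2m\}$ with the same fan still attached.

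Lemma~\ref{L.fanOnCompleteGraphCM} applies to both $H$ and $H''$, giving $\depth R/A = 2m+1$ and $\depth R/(A+B) = 2m$; the inductive hypothesis together with the fact that $x_{2m-1},y_{2m-1}$ form a regular sequence on $R/J_{F_{m-1}}$ yields $\depth R/B = 2m+1$. The Depth Lemma applied to the short exact sequence then gives $\depth R/J_{F_m} \geq \min\{2m+1,\,2m+1\} = 2m+1$, which equals $\dim R/J_{F_m} = 2m+1$ because $J_{F_m}$ is unmixed and $F_m$ is connected on $2m$ vertices. Hence $R/J_{F_m}$ is Cohen-Macaulay. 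The main obstacle I expect is the combinatorial bookkeeping needed to identify $H$ and $H''$ as clique-plus-fan graphs in the precise sense required by Lemma~\ref{L.fanOnCompleteGraphCM}; once that structural identification is in place, the depth arithmetic is a routine application of the Depth Lemma.
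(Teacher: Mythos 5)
Your proposal is correct and follows essentially the same route as the paper: unmixedness via Proposition \ref{Semiconi} (using that the neighbours $2$ and $2m-3$ of the leaves of $F_{m-1}$ are adjacent), then the splitting $J_{F_m}=A\cap B$ at the vertex $2m-1$ from Remark \ref{R.splittingPrimaryDecomposition}, identification of $A$, $B$, $A+B$ with fan-on-clique graphs and $J_{F_{m-1}}$, and the Depth Lemma. The only cosmetic differences are that you describe $H$ as $K_{m+1}$ with a fan on $\{2,\dots,2m-2\}$ while the paper views the same graph as a fan on all of $N_{F_m}(2m-1)$, and the paper also treats $m=2$ as a base case (as a complete intersection) rather than running it through the inductive machinery.
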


\begin{proof}
First we show that $J_{F_m}$ is unmixed.
We proceed by induction on $m \geq 1$. If $m=1$, then $J_{F_1}$ is a principal ideal, hence it is prime and unmixed of height $1$. Let $m \geq 2$ and assume the claim true for $m-1$. Then $F_m$ is obtained from $F_{m-1}$ by adding the vertices $n-1$ and $n$ and connecting $n-1$ to the vertices $2,4,\dots,n$. Since $J_{F_{m-1}}$ is unmixed by induction and $\{2,n-3\} \in E(F_{m-1})$, by Proposition \ref{Semiconi}, it follows that $J_{F_m}$ is unmixed and
\begin{equation}\label{Eq.cutSetsFm}
\mc M(F_m)=\{\emptyset\} \cup \{ \{2,4,\dots,2i\} : 1 \leq i \leq m-1 \} \cup \{\{n-1\} \cup S: S \in \mc M(F_{m-1}) \}.
\end{equation}

Now we prove that $J_{F_m}$ is Cohen-Macaulay by induction on $m \geq 1$. The graphs $F_1$ and $F_2$ are paths, hence the ideals $J_{F_1}$ and $J_{F_2}$ are complete intersections, by \cite[Corollary 1.2]{EHH11}, thus Cohen-Macaulay.

Let $m \geq 3$ and assume that $J_{F_{m-1}}$ is Cohen-Macaulay. Let $J_{F_m}=\bigcap_{S \in \mathcal M(F_m)} P_S(F_m)$ be the primary decomposition of $J_{F_m}$ and define $A=\bigcap_{S \in \mathcal M(F_m), n-1 \notin S} P_S(F_m)$ and $B=\bigcap_{S \in \mathcal M(F_m), n-1 \in S} P_S(F_m)$. Then $J_{F_m} = A \cap B$.

By Remark \ref{R.splittingPrimaryDecomposition}, $A=J_H$, where $H$ is obtained by adding a fan to the complete graph with vertex set $N_{F_m}(n-1)=\{2,4,\dots,n\}$ on the set $N_{F_m}(n-1)$, hence it is Cohen-Macaulay by Lemma \ref{L.fanOnCompleteGraphCM} and $\depth(R/A)=n+1$.

Since $F_m \setminus \{n-1\}=F_{m-1} \sqcup \{n\}$, by \eqref{Eq.cutSetsFm}, $\mc M(F_{m-1} \sqcup \{n\})=\{S \setminus \{n-1\} : S \in \mc M(F_m), n-1 \in S\}$. Thus, $B=(x_{n-1},y_{n-1})+J_{F_{m-1} \sqcup \{n\}}=(x_{n-1},y_{n-1})+J_{F_{m-1}}$, hence it is Cohen-Macaulay by induction and $\depth(R/B)=n+1$.

Finally, $A+B=(x_{n-1},y_{n-1})+J_{H''}$, where $H''=H \setminus \{n-1\}$, which is Cohen-Macaulay again by Lemma \ref{L.fanOnCompleteGraphCM} and $\depth(R/(A+B))=n$.

The Depth Lemma applied to the exact sequence \eqref{Eq.shortExactSeq} yields $\depth(R/J_{F_m})=n+1$. Moreover, since $J_{F_m}$ is unmixed, it follows that $\dim(R/J_{F_m})=n+1$ and, therefore, $J_{F_m}$ is Cohen-Macaulay.
\end{proof}

\section{Gluing graphs: operations \texorpdfstring{$\ast$}{*} and \texorpdfstring{$\circ$}{°}} \label{S.gluing}

In this section we consider two operations that, together with the graphs $F_m$, are the main ingredients of Theorem \ref{T.CHARACTERIZATION}. Given two (not necessarily bipartite) graphs $G_1$ and $G_2$, we glue them to obtain a new graph $G$. If $G_1$ and $G_2$ are bipartite, both constructions preserve the Cohen-Macaulayness of the associated binomial edge ideal. The first operation is a particular case of the one studied by Rauf and Rinaldo in \cite[Section 2]{RR14}.

\begin{definition}
For $i=1,2$, let $G_i$ be a graph with at least one leaf $f_i$. We define the graph $G=(G_1,f_1) \ast (G_2,f_2)$ obtained by identifying $f_1$ and $f_2$ (see Figure \ref{F.example2aNonBip}). If it is not important to specify the vertices $f_i$ or it is clear from the context, we simply write $G_1 \ast G_2$.
\end{definition}

\begin{figure}[ht!]
\begin{subfigure}[c]{0.25\textwidth}
\centering
\begin{tikzpicture}[scale=0.9]
\draw (0.,2.)-- (0.,0.);
\draw (1.,1.)-- (2.,1.);
\draw (-1.,1.)-- (0.,0.);
\draw (0.,0.)-- (1.,1.);
\draw (0.,2.)-- (1.,1.);
\draw (0.,2.)-- (-1.,1.);
\draw (0.,2.)-- (0.,0.);
\node (a) at (0.,0.) {};
\node (b) at (0.,2.) {};
\node (c) at (-1.,1.) {};
\node (d) at (1.,1.) {};
\node[label={right:{\small $f_1$}}] (e) at (2.,1.) {};
\end{tikzpicture}
\caption{A graph $G_1$}
\end{subfigure}
\begin{subfigure}[c]{0.25\textwidth}
\centering
\begin{tikzpicture}[scale=0.9]
\draw (2.,1.)-- (3.,1.);
\draw (3.,1.)-- (4.,2.);
\draw (4.,2.)-- (5.,1.);
\draw (3.,1.)-- (4.,0.);
\draw (4.,0.)-- (5.,1.);
\draw (5.,1.)-- (6.,1.);
\node[label={left:{\small $f_2$}}] (e) at (2.,1.) {};
\node (f) at (3.,1.) {};
\node (g) at (4.,2.) {};
\node (h) at (5.,1.) {};
\node (i) at (4.,0.) {};
\node (j) at (6.,1.) {};
\end{tikzpicture}
\caption{A graph $G_2$}
\end{subfigure}
\begin{subfigure}[c]{0.45\textwidth}
\centering
\begin{tikzpicture}[scale=0.9]
\draw (0.,2.)-- (0.,0.);
\draw (1.,1.)-- (2.,1.);
\draw (2.,1.)-- (3.,1.);
\draw (3.,1.)-- (4.,2.);
\draw (4.,2.)-- (5.,1.);
\draw (3.,1.)-- (4.,0.);
\draw (4.,0.)-- (5.,1.);
\draw (5.,1.)-- (6.,1.);
\draw (-1.,1.)-- (0.,0.);
\draw (0.,0.)-- (1.,1.);
\draw (0.,2.)-- (1.,1.);
\draw (0.,2.)-- (-1.,1.);
\draw (0.,2.)-- (0.,0.);
\node (a) at (0.,0.) {};
\node (b) at (0.,2.) {};
\node (c) at (-1.,1.) {};
\node (d) at (1.,1.) {};
\node[label={[label distance=-2mm]270:{\small $f_1=f_2$}}] (e) at (2.,1.) {};
\node (f) at (3.,1.) {};
\node (g) at (4.,2.) {};
\node (h) at (5.,1.) {};
\node (i) at (4.,0.) {};
\node (j) at (6.,1.) {};
\end{tikzpicture}
\caption{The graph $(G_1,f_1) \ast (G_2,f_2)$}
\end{subfigure}
\caption{} \label{F.example2aNonBip}
\end{figure}

In the next Theorem we recall some results about the operation $\ast$, see \cite[Lemma 2.3, Proposition 2.6, Theorem 2.7]{RR14}.

\begin{theorem} \label{T.type2aCM}
For $i=1,2$, consider a graph $G_i$ with at least one leaf $f_i$ and $G=(G_1,f_1) \ast (G_2,f_2)$. Let $v_1$ and $v_2$ be the neighbours of the leaves and let $v$ be the vertex obtained by identifying $f_1$ and $f_2$. If
\begin{gather*}
\mathcal A = \{S_1 \cup S_2 : S_i \in \mathcal M(G_i), \, i=1,2\} \text{ and}\\
\mathcal B = \{S_1 \cup S_2 \cup \{v\} : S_i \in \mathcal M(G_i) \text{ and } v_i \notin S_i, \, i=1,2 \},
\end{gather*}
the following properties hold:
\begin{itemize}
\item[{\rm a)}] $\mc M(G) = \mc A \cup \mc B$;
\item[{\rm b)}] $J_G$ is unmixed if and only if $J_{G_1}$ and $J_{G_2}$ are unmixed;
\item[{\rm c)}] $J_G$ is Cohen-Macaulay if and only if $J_{G_1}$ and $J_{G_2}$ are Cohen-Macaulay.
\end{itemize}
\end{theorem}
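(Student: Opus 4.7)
The plan is to deduce all three parts from a case analysis on whether the identified vertex $v := f_1 = f_2$ belongs to $S \in \mc M(G)$, combined with the primary-decomposition splitting of Remark \ref{R.splittingPrimaryDecomposition}. For (a), given $S \subseteq V(G)$, set $S_i = S \cap (V(G_i) \setminus \{v\})$. A direct component count yields $c_G(S) = c_{G_1}(S_1) + c_{G_2}(S_2) - 1$ when $v \notin S$ (the $-1$ recording the merging of the components of $f_1$ and $f_2$ through $v$), and $c_G(S) = c_{G_1}(S_1) + c_{G_2}(S_2)$ when $v \in S$, using that essentialness of $v$ in the latter case forces both $v_1, v_2 \notin S$ (otherwise re-adding $v$ joins at most one component of $G_{\overline{S \setminus \{v\}}}$), and that removing the leaf $v = f_i$ with $v_i \notin S_i$ does not affect the component count. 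Essentialness of any $w \in S \cap V(G_i)$ in $G$ reduces in either case to essentialness in $G_i$, proving $\mc M(G) = \mc A \cup \mc B$. Part (b) is then immediate: in both cases the formulas give
\[
c_G(S) - |S| - 1 = \bigl(c_{G_1}(S_1) - |S_1| - 1\bigr) + \bigl(c_{G_2}(S_2) - |S_2| - 1\bigr),
\]
so the criterion \eqref{Eq.unmixedness} holds on $G$ iff it holds on both $G_1$ and $G_2$ (the converse direction uses $S_2 = \emptyset$).

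For the non-trivial direction of (c), assume $J_{G_1}$ and $J_{G_2}$ are Cohen-Macaulay; by (b), $J_G$ is unmixed with $\dim R/J_G = |V(G)| + 1$. Follow the strategy of Remark \ref{R.splittingPrimaryDecomposition} at $v$. Its hypothesis $\mc M(H') = \{S \setminus \{v\} : S \in \mc M(G),\, v \in S\}$, where $H' = G \setminus \{v\} = G_1' \sqcup G_2'$ and $G_i' = G_i \setminus \{f_i\}$, is verified using (a) together with unmixedness: any $T_i \in \mc M(G_i')$ with $v_i \in T_i$ would force $c_{G_i}(T_i) = c_{G_i'}(T_i) + 1 \geq |T_i| + 2$, contradicting unmixedness of $J_{G_i}$. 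The remark then identifies $A = J_H$ with $H = G \cup \{\{v_1, v_2\}\}$, $B = (x_v, y_v) + J_{G_1'} + J_{G_2'}$, and $A + B = (x_v, y_v) + J_{H \setminus \{v\}}$. The quotient $R/B$ is easily Cohen-Macaulay of depth $|V(G)| + 1$, since removing the leaf $f_i$ preserves Cohen-Macaulayness (so each $J_{G_i'}$ is Cohen-Macaulay on its own set of variables). The depth lemma applied to the short exact sequence \eqref{Eq.shortExactSeq} then yields $\depth R/J_G = |V(G)| + 1 = \dim R/J_G$, provided $R/A$ and $R/(A + B)$ have the correct depths. The converse direction of (c) will follow from (b) together with a standard restriction-to-minimal-prime argument.

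The main obstacle is establishing these depths, i.e.\ Cohen-Macaulayness of the binomial edge ideals of the auxiliary graphs $H$ and $H'' := H \setminus \{v\}$. In $H$ the vertex $v$ is a free vertex lying in the unique maximal clique $\{v, v_1, v_2\}$, while $H''$ glues $G_1'$ and $G_2'$ along the single edge $\{v_1, v_2\}$. The cleanest resolution is to recognise both as gluings at a free vertex in the general sense of \cite[Section 2]{RR14}, so that their Cohen-Macaulayness follows by the recursive argument of Rauf and Rinaldo; this is precisely the content of \cite[Lemma 2.3, Proposition 2.6, Theorem 2.7]{RR14} that the theorem is recalling.
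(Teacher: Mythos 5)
The paper gives no proof of this statement: Theorem \ref{T.type2aCM} is explicitly \emph{recalled} from \cite[Lemma 2.3, Proposition 2.6, Theorem 2.7]{RR14}, so your attempt is an independent reconstruction rather than a variant of an argument in the paper. Your sketches of a) and b) are correct. The gap is in c), at the point where you verify the hypothesis of Remark \ref{R.splittingPrimaryDecomposition} at the identified vertex $v$. You claim that no $T_i \in \mc M(G_i \setminus \{f_i\})$ can contain $v_i$ because such a $T_i$ would give $c_{G_i}(T_i)=c_{G_i\setminus\{f_i\}}(T_i)+1\geq |T_i|+2$. The inequality $c_{G_i\setminus\{f_i\}}(T_i)\geq |T_i|+1$ is unjustified: it is the unmixedness lower bound for the graph $G_i\setminus\{f_i\}$, about which you know nothing, and it fails for arbitrary cut sets of arbitrary graphs. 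Concretely, take $G_1=F_3$ with $f_1=6$ and $v_1=5$; then $T=\{3,5\}$ is a cut set of $F_3\setminus\{6\}$ containing $v_1$, with $c_{F_3\setminus\{6\}}(T)=2=|T|$, while $c_{F_3}(T)=3=|T|+1$ is perfectly consistent with the unmixedness of $J_{F_3}$. Hence $\mc M(G\setminus\{v\})$ strictly contains $\{S\setminus\{v\} : S\in\mc M(G),\ v\in S\}$ (which, by your own part a), consists only of unions $S_1\cup S_2$ with $v_i\notin S_i$). Since dropping a minimal prime of a radical ideal strictly enlarges the intersection, your identification $B=(x_v,y_v)+J_{G_1\setminus\{f_1\}}+J_{G_2\setminus\{f_2\}}$ is false. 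What the intersection defining $B$ actually yields is $(x_v,y_v)$ plus the binomial edge ideals of the graphs obtained from $G_i\setminus\{f_i\}$ by completing $N_{G_i}(v_i)\setminus\{f_i\}$ to a clique (Remark \ref{R.splittingPrimaryDecomposition}~i) applied inside $G_i\setminus\{f_i\}$ at $v_i$), and the Cohen--Macaulayness of those graphs is not something you have established; so the depth computation for $R/B$ does not go through as written.

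A second, softer issue: you acknowledge that the Cohen--Macaulayness of $R/A$ and $R/(A+B)$ is ``the main obstacle'' and resolve it by invoking the recursive argument of \cite{RR14} --- the very source the theorem is quoting. That makes the argument essentially circular as a proof. Since the paper itself treats the entire statement as a citation, either cite \cite{RR14} for the full theorem, or carry out the decomposition honestly, which requires identifying $A$, $B$ and $A+B$ correctly as above and proving Cohen--Macaulayness of the resulting auxiliary graphs by an induction that does not presuppose the theorem.
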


We now introduce the second operation.

\begin{definition}
For $i=1,2$, let $G_i$ be a graph with at least one leaf $f_i$, $v_i$ its neighbour and assume $\deg_{G_i}(v_i) \geq 3$. We define $G = (G_1,f_1) \circ (G_2,f_2)$ to be the graph obtained from $G_1$ and $G_2$ by removing the leaves $f_1,f_2$ and identifying $v_1$ and $v_2$ (see Figure \ref{F.2bNotUnmixed}). If it is not important to specify the leaves $f_i$ or it is clear from the context, then we simply write $G_1 \circ G_2$.
\end{definition}

We denote by $v$ the vertex of $G$ resulting from the identification of $v_1$ and $v_2$ and, with abuse of notation, we write $V(G_1) \cap V(G_2) = \{v\}$.

Notice that, if $\deg_{G_i}(v_i)=2$ for $i=1,2$, then $(G_1,f_1) \circ (G_2,f_2) = (G_1 \setminus \{f_1\},v_1) \ast (G_2 \setminus \{f_2\},v_2)$. On the other hand, we do not allow $\deg_{G_1}(v_1)=2$ and $\deg_{G_2}(v_2) \geq 3$ (or vice versa), since in this case the operation $\circ$ does not preserve unmixedness, see Remark \ref{R.2bRemark} (ii).

\begin{remark} \label{no converse for 2b}
Unlike the operation $\ast$ (cf. Theorem \ref{T.type2aCM}), if one of $J_{G_1}$ and $J_{G_2}$ is not Cohen-Macaulay, then $J_{G_1 \circ G_2}$ may not be unmixed, even if $G_1$ and $G_2$ are bipartite. For example, let $G_1$ and $G_2$ be the graphs in Figure \ref{F.2bNotUnmixed1} and \ref{F.2bNotUnmixed2}. Then $J_{G_1 \circ G_2}$ is not unmixed even if $J_{G_1}=J_{F_4}$ is Cohen-Macaulay (by Proposition \ref{P.type1CM}) and $J_{G_2}=J_{M_{4,4}}$ is unmixed (by Example \ref{E.unmixedNotCM1}). In fact, $S=\{5,7,8,10,12\} \in \mathcal M(G)$, but $c_G(S) = 5 \neq |S|+1$.

\begin{figure}[ht!]
\begin{subfigure}[c]{0.3\textwidth}
\centering
\begin{tikzpicture}
\node (a) at (0,1.5) {};
\node (b) at (1,1.5) {};
\node (c) at (2,1.5) {};
\node[label={right:{\small $v_1$}}] (d) at (3,1.5) {};
\node (e) at (0,0) {};
\node (f) at (1,0) {};
\node (g) at (2,0) {};
\node[label={right:{\small $f_1$}}] (h) at (3,0) {};
\draw (0,1.5) -- (0,0) -- (1,1.5) -- (1,0) -- (2,1.5) -- (2,0) -- (3,1.5) -- (3,0)
(2,1.5) -- (0,0) -- (3,1.5) -- (1,0);
\end{tikzpicture}
\caption{The graph $G_1$} \label{F.2bNotUnmixed1}
\end{subfigure}
\begin{subfigure}[c]{0.3\textwidth}
\centering
\begin{tikzpicture}
\node[label={left:{\small $v_2$}}] (a) at (0,1.5) {};
\node (b) at (1,1.5) {};
\node (c) at (2,1.5) {};
\node (d) at (3,1.5) {};
\node[label={left:{\small $f_2$}}] (e) at (0,0) {};
\node (f) at (1,0) {};
\node (g) at (2,0) {};
\node (h) at (3,0) {};
\draw (0,0) -- (0,1.5) -- (1,0) -- (1,1.5) -- (2,0) -- (2,1.5) -- (3,0) -- (3,1.5)
(2,0) -- (0,1.5) -- (3,0) -- (1,1.5)
(1,0) -- (2,1.5);
\end{tikzpicture}
\caption{The graph $G_2$} \label{F.2bNotUnmixed2}
\end{subfigure}
\begin{subfigure}[c]{0.38\textwidth}
\centering
\begin{tikzpicture}
\node[label={above:{\small $1$}}] (a) at (0,1.5) {};
\node[label={above:{\small $3$}}] (b) at (1,1.5) {};
\node[label={above:{\small $5$}}] (c) at (2,1.5) {};
\node[label={above:{\small $7$}}] (d) at (3,1.5) {};
\node[label={below:{\small $2$}}] (e) at (0,0) {};
\node[label={below:{\small $4$}}] (f) at (1,0) {};
\node[label={below:{\small $6$}}] (g) at (2,0) {};
\node[label={below:{\small $8$}}] (h) at (4,0) {};
\node[label={above:{\small $9$}}] (i) at (4,1.5) {};
\node[label={below:{\small $10$}}] (j) at (5,0) {};
\node[label={above:{\small $11$}}] (k) at (5,1.5) {};
\node[label={below:{\small $12$}}] (l) at (6,0) {};
\node[label={above:{\small $13$}}] (m) at (6,1.5) {};
\draw (0,1.5) -- (0,0) -- (1,1.5) -- (1,0) -- (2,1.5) -- (2,0) -- (3,1.5) -- (4,0) -- (4,1.5) -- (5,0) -- (5,1.5) -- (6,0) -- (6,1.5)
(2,1.5) -- (0,0) -- (3,1.5) -- (1,0)
(5,0) -- (3,1.5) -- (6,0) -- (4,1.5)
(4,0) -- (5,1.5);
\end{tikzpicture}
\caption{The graph $G=G_1 \circ G_2$}
\end{subfigure}
\caption{} \label{F.2bNotUnmixed}
\end{figure}
\end{remark}

We describe the structure of the cut sets of $G_1 \circ G_2$ under some extra assumption on $G_1$ and $G_2$. In this case, $\circ$ preserves unmixedness.

\begin{theorem}\label{T.type2bBipartiteUnmixed}
Let $G=G_1 \circ G_2$ and set $V(G_1) \cap V(G_2)=\{v\}$, where $\deg_{G_i}(v) \geq 3$ for $i=1,2$. If for $i=1,2$ there exists $u_i \in N_{G_i}(v)$ with $\deg_{G_i}(u_i)=2$, then
\begin{equation}\label{Eq.cutSets2b}
\mathcal{M}(G)=\mc A \cup \mc B,
\end{equation}
where
\begin{gather*}
\mathcal A = \{S_1 \cup S_2 : S_i \in \mathcal M(G_i), i=1,2, v \notin S_1 \cup S_2\} \text{ and}\\
\mathcal B = \{S_1 \cup S_2 : S_i \in \mathcal M(G_i), i=1,2, S_1 \cap S_2=\{v\} \}.
\end{gather*}

If $J_{G_1}$ and $J_{G_2}$ are unmixed and for $i=1,2$ there exists $u_i \in N_{G_i}(v)$ with $\deg_{G_i}(u_i)=2$, then $J_G$ is unmixed. The converse holds if $G$ is bipartite. In particular, if $G$ is bipartite and $J_G$ is unmixed, the cut sets of $G$ are described in \eqref{Eq.cutSets2b}.
\end{theorem}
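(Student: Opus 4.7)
The plan is to prove the three claims in order: the set equality $\mc M(G) = \mc A \cup \mc B$ under the degree-two hypothesis, the transfer of unmixedness, and the converse under bipartite $G$.

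The structural backbone is to describe $G_{\overline T}$, for $T \subseteq V(G)$ with $S_i := T \cap V(G_i)$, as the graph obtained from $(G_i)_{\overline{S_i}}$ ($i=1,2$) by deleting the leaves $f_i$ (which lie in $V(G_i)$ but not in $V(G)$) and either gluing at $v$ (when $v \notin T$) or taking the disjoint union (when $v \in T$). Under this picture, $\mc A \cup \mc B \subseteq \mc M(G)$ reduces to checking that each $s \in T$ joins at least two components of $G_{\overline T}$: for $s \in S_i \setminus \{v\}$ the neighbours of $s$ lie entirely in $V(G_i)$, so $S_i \in \mc M(G_i)$ (whence $s$ joins two components of $(G_i)_{\overline{S_i}}$) transfers to $G_{\overline T}$; for $s = v$ in the $\mc B$ case, $S_i \in \mc M(G_i)$ with $v \in S_i$ gives at least two components of $(G_i)_{\overline{S_i}}$ containing a neighbour of $v$, and after discarding the possibly isolated $\{f_i\}$, at least one neighbour-component survives on each side, so $v$ merges them.

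For the converse inclusion $\mc M(G) \subseteq \mc A \cup \mc B$, the degree-two hypothesis is essential. For $s \in S_i \setminus \{v\}$ the neighbourhood argument reverses cleanly. The delicate case is $v \in T$ and $s = v$: one must show $v$ joins at least two components of each $(G_i)_{\overline{S_i}}$. The vertex $u_i$ has $\deg_{G_i}(u_i) = 2$ with $G_i$-neighbours $v$ and $w_i$, hence the same two neighbours in $G$. If $u_i$ lay in $T$, then $v \in T$ would leave $u_i$ with at most one neighbour in $V(G) \setminus T$, contradicting $u_i \in T \in \mc M(G)$. Hence $u_i \notin S_i$. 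Since $v \in S_i$ and $\deg_{G_i}(u_i) = 2 \neq 1$ forces $u_i \neq f_i$, the leaf $f_i$ is isolated in $(G_i)_{\overline{S_i}}$ while $u_i$ sits in a different component; so the two neighbours $f_i, u_i$ of $v$ lie in distinct components of $(G_i)_{\overline{S_i}}$, yielding $S_i \in \mc M(G_i)$. The unmixedness statement is then immediate: using $c_{G_i}(S_i) = |S_i| + 1$, one gets $c_G(T) = c_{G_1}(S_1) + c_{G_2}(S_2) - 1 = |T| + 1$ for $T \in \mc A$ (the $-1$ from gluing at $v$, with $|T| = |S_1| + |S_2|$), and $c_G(T) = (c_{G_1}(S_1) - 1) + (c_{G_2}(S_2) - 1) = |T| + 1$ for $T \in \mc B$ (the two $-1$'s from $f_1, f_2 \notin V(G)$, with $|T| = |S_1| + |S_2| - 1$).

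The main obstacle is the converse under bipartite $G$. Given $G$ bipartite with $J_G$ unmixed, Proposition~\ref{P.unmixed2leaves} provides exactly two leaves for $G$, neither being $v$ since $\deg_G(v) \geq 4$. I would first argue that these leaves split one-per-side between $V(G_1) \setminus \{f_1\}$ and $V(G_2) \setminus \{f_2\}$: otherwise $G_i$ would contain three leaves ($f_i$ together with the two leaves of $G$ in $V(G_i)$), and one could choose $X \subseteq V(G)$ disjoint from the $G$-neighbours of the two leaves so that Lemma~\ref{maximal degree}(a) produces a cut set $N_G(X)$ of $G$ violating $c_G(N_G(X)) = |N_G(X)| + 1$. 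Each $G_i$ then has exactly two leaves. To promote this to $J_{G_i}$ unmixed, I would exhibit for every $S_i \in \mc M(G_i)$ a matching cut set of $G$ (of the form $S_i \cup \widetilde S \cup \{v\}$ for a suitable cut set $\widetilde S$ of $G_{3-i}$ containing $v$) and read off $c_{G_i}(S_i) = |S_i| + 1$ from the component-counting formulas of the previous step applied to $G$. Finally, since $\{v\} \in \mc M(G)$ gives $c_G(\{v\}) = 2$, each $(G_i \setminus f_i) \setminus \{v\}$ is connected; combined with $J_{G_i}$ unmixed, bipartite $G_i$, and Lemma~\ref{maximal degree}, the bipartition sizes of $G_i$ are constrained enough to force $v$ to have a degree-two neighbour in $G_i$, producing the required $u_i$.
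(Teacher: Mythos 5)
Your treatment of the first two assertions (the description $\mathcal M(G)=\mc A\cup\mc B$ under the degree-two hypothesis, and the transfer of unmixedness from $J_{G_1},J_{G_2}$ to $J_G$) follows essentially the paper's route: the same component-counting formulas $c_G(T)=c_{G_1}(S_1)+c_{G_2}(S_2)-1$ (resp.\ $-2$), and the same use of $u_i$ to rule out the bad case $s=v$, including the observation that $u_i\notin T$. That part is sound.

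The converse, however, has a genuine gap at its final and hardest step. You propose to deduce the existence of a degree-two neighbour $u_i$ of $v$ in $G_i$ from properties of $G_i$ alone: $G_i$ bipartite, $J_{G_i}$ unmixed, exactly two leaves, $(G_i\setminus f_i)\setminus\{v\}$ connected, equal (or nearly equal) bipartition sizes, plus Lemma \ref{maximal degree}. This inference is false. The graph $M_{4,4}$ of Example \ref{E.unmixedNotCM1} (used as $G_2$ in Remark \ref{no converse for 2b}) satisfies every one of these hypotheses: it is bipartite with $|V_1|=|V_2|=4$, $J_{M_{4,4}}$ is unmixed, it has exactly two leaves, and deleting the leaf $1$ and its neighbour $2$ leaves a connected graph. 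Yet the neighbours of the cut vertex $2$ are $1,3,5,7$, of degrees $1,3,3,4$ respectively, so $v=2$ has no neighbour of degree two. (Consistently, $F_4\circ M_{4,4}$ is the paper's example of a $\circ$-product that fails to be unmixed.) So no amount of bipartition-size bookkeeping inside $G_i$ can produce $u_i$; the unmixedness of $J_G$ itself must enter this step in an essential, global way. This is exactly what the paper's proof does: it builds explicit cut sets $S\in\mc M(G_1)$ and $T\in\mc M(G_2)$ with $S\cap T=\{v\}$, computes $c_G(S\cup T)=|S\cup T|$ so that $S\cup T\notin\mc M(G)$, extracts a vertex $u$ with $c_G((S\cup T)\setminus\{u\})=c_G(S\cup T)$, and then shows $u\in S\setminus\{v\}$, $u\in N_{G_1}(v)$ and $\deg_{G_1}(u)=2$. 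That argument (or a substitute of comparable substance) is missing from your proposal, and without it the converse is not proved.
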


\begin{proof}
Let $S=S_1 \cup S_2 \subset V(G)$, where $S_1=S \cap V(G_1)$ and $S_2=S \cap V(G_2)$. Notice that
\begin{eqnarray}\label{Eq.connComponents2b}
c_G(S) &=& c_{G_1}(S_1)+c_{G_2}(S_2)-1, \text{ if } v \notin S, \label{Eq.connComp2b2}\\
c_G(S) &=& c_{G_1}(S_1)+c_{G_2}(S_2)-2, \text{ if } v \in S. \label{Eq.connComp2b1}
\end{eqnarray}

In fact, if $v \notin S$, the connected components of $G_{\overline S}$ are those of $(G_1)_{\overline S_1}$ and $(G_2)_{\overline S_2}$, where the component containing $v$ is counted once. On the other hand, if $v \in S$, clearly $v \in S_1 \cap S_2$ and the connected components of $G_{\overline S}$ are those of $(G_1)_{\overline S_1}$ and $(G_2)_{\overline S_2}$, except for the two leaves $f_1$ and $f_2$.

In order to prove \eqref{Eq.cutSets2b}, we show the two inclusions.

$\subseteq$: Let $S \in \mathcal{M}(G)$ and define $S_1$ and $S_2$ as before. Suppose by contradiction that $S_1 \notin \mc M(G_1)$, i.e., there exists $w \in S_1$ such that $c_{G_1}(S_1)=c_{G_1}(S_1 \setminus \{w\})$. If $v \notin S$, then by \eqref{Eq.connComp2b2}
\[
c_G(S \setminus \{w\})=c_{G_1}(S_1 \setminus \{w\}) + c_{G_2}(S_2) - 1 = c_{G_1}(S_1) + c_{G_2}(S_2) - 1 = c_G(S),
\]
a contradiction. On the other hand, if $v \in S$ and $w \neq v$, by \eqref{Eq.connComp2b1} we have
\[
c_G(S \setminus \{w\})=c_{G_1}(S_1 \setminus \{w\}) + c_{G_2}(S_2) - 2 = c_{G_1}(S_1) + c_{G_2}(S_2) - 2 = c_G(S),
\]
again a contradiction. We show that the case $w=v$ cannot occur. In fact, by assumption, there exists $u_1 \in N_{G_1}(v)$ such that $\deg_{G_1}(u_1)=2$. Since $v \in S$, we have that $c_G(S)=c_G(S \setminus \{u_1\})$, hence $u_1 \notin S$. Thus $c_{G_1}(S_1)>c_{G_1}(S_1 \setminus \{v\})$, because by adding $v$ to $(G_1)_{\overline S_1}$, we join the connected component of $u_1$ and the isolated vertex $f_1$, which is a leaf in $G_1$. Hence $w \neq v$. The same argument also shows that $S_2 \in \mc M(G_2)$.

$\supseteq$: Let $S=S_1 \cup S_2$, with $S_i \in \mathcal{M}(G_i)$, for $i=1,2$. Assume first $S_1 \cap S_2=\{v\}$. By the equalities \eqref{Eq.connComp2b2} and \eqref{Eq.connComp2b1} we have
\[
c_G(S \setminus \{v\})=c_{G_1}(S_1 \setminus \{v\})+c_{G_2}(S_2 \setminus \{v\})-1 \leq c_{G_1}(S_1)+c_{G_2}(S_2)-3=c_G(S)-1<c_G(S).
\]
Let $w \in S$, $w \neq v$. Without loss of generality, we may assume $w \in S_1$. Then
\[
c_G(S \setminus \{w\})=c_{G_1}(S_1 \setminus \{w\})+c_{G_2}(S_2)-2 \leq c_{G_1}(S_1)+c_{G_2}(S_2)-3=c_G(S)-1<c_G(S).
\]

Assume now that $v \notin S_1 \cup S_2$. Let $w \in S$, and without loss of generality $w \in S_1$. Then
\[
c_G(S \setminus \{w\})=c_{G_1}(S_1 \setminus \{w\})+c_{G_2}(S_2)-1 \leq c_{G_1}(S_1)+c_{G_2}(S_2)-2=c_G(S)-1<c_G(S).
\]

Let now $J_{G_1}$ and $J_{G_2}$ be unmixed and for $i=1,2$ there exists $u_i \in N_{G_i}(v)$ with $\deg_{G_i}(u_i)=2$. By the last assumption, the cut sets of $G$ are described in \eqref{Eq.cutSets2b}. Let $S \in \mathcal{M}(G)$ and $S_i=S \cap V(G_i)$ for $i=1,2$. Thus, by \eqref{Eq.connComp2b2} and \eqref{Eq.connComp2b1},
\begin{itemize}
\item[(i)] if $v \notin S$, $c_G(S)=c_{G_1}(S_1)+c_{G_2}(S_2)-1=|S_1|+1+|S_2|+1-1=|S_1|+|S_2|+1=|S|+1$,
\item[(ii)] if $v \in S$, $c_G(S)=c_{G_1}(S_1)+c_{G_2}(S_2)-2=|S_1|+1+|S_2|+1-2=|S_1|+|S_2|=|S|+1$.
\end{itemize}
It follows that $J_G$ is unmixed.

Conversely, let $J_G$ be unmixed and $G$ bipartite. If $S$ is a cut set of $G_1$, then it is also a cut set of $G$ and clearly $c_{G_1}(S)=c_G(S)$; therefore $J_{G_1}$ is unmixed and the same holds for $J_{G_2}$. By Proposition \ref{P.unmixed2leaves}, the graphs $G$, $G_1$ and $G_2$ have exactly two leaves. Let $f_i$ be the leaf of $G_i$ adjacent to $v$ and $g_i$ be the other leaf of $G_i$. Thus, $g_1$ and $g_2$ are the leaves of $G$.

By symmetry, it is enough to prove that there exists $u_1 \in N_{G_1}(v)$ such that $\deg_{G_1}(u_1)=2$. For $i=1,2$, let $V(G_i)=V_i \cup W_i$ and assume $|V_1| \leq |W_1|$. By Remark \ref{unmixed cut vertices}, we have one of the following two cases:
\begin{itemize}
  \item[a)] if $|V_1|=|W_1|$, we may assume $f_1 \in W_1$ and $g_1 \in V_1$. Set $S=(W_1 \setminus \{f_1\}) \cup \{v\}$. Hence, $c_{G_1}(S)=|V_1|=|W_1|=|S|$.
  \item[b)] If $|W_1|=|V_1|+1$, then $f_1,g_1 \in W_1$. Hence $v \in V_1$. Set $S=(W_1 \setminus \{f_1,g_1\}) \cup \{v\}$. Thus, $c_{G_1}(S)=|V_1|=|W_1|-1=|S|$.
\end{itemize}

First suppose $|V(G_2)|$ even and assume $f_2 \in W_2$. Hence, $v,g_2 \in V_2$ and $T=V_2 \setminus \{g_2\}$ is a cut set of $G_2$.

Now, let $|V(G_2)|$ be odd and assume $f_2 \in W_2$. Hence, $g_2 \in W_2$, $v \in V_2$ and $|W_2|=|V_2|+1$. Then $T=V_2$ is a cut set of $G_2$.

In both cases, notice that $S \cup T$ is not a cut set of $G$, since $S \cap T = \{v\}$ and, by \eqref{Eq.connComp2b1},
\[
c_G(S \cup T)=c_{G_1}(S)+c_{G_2}(T)-2=|S|+|T|-1=|S \cup T|,
\]
which contradicts the unmixedness of $J_G$. Let $u \in S \cup T$ such that $c_G((S \cup T) \setminus \{u\})=c_G(S \cup T)=|S \cup T|$.

We show that $u \in S$ and $u \neq v$. If $u \notin S$, then $u \in T$ and $u \neq v$. By \eqref{Eq.connComp2b1},
\[
c_G((S \cup T) \setminus \{u\})=c_{G_1}(S)+c_{G_2}(T \setminus \{u\})-2<|S|+c_{G_2}(T)-2=|S|+|T|-1=|S \cup T|=c_G(S \cup T),
\]
against our assumption (the inequality holds since $T$ is a cut set of $G_2$ and the second equality follows from the unmixedness of $J_{G_2}$). Thus, $u \in S$. Moreover, in both cases $c_{G_1}(S \setminus \{v\})=c_{G_1}(S)=|S|$ (since $v$ is a leaf of $(G_1)_{\overline{S \setminus \{v\}}}$) and, by \eqref{Eq.connComp2b2},
\[
c_G((S \cup T) \setminus \{v\}) \!=\! c_{G_1}(S \setminus \{v\})+c_{G_2}(T \setminus \{v\})-1 \!=\! |S|+|T|-|N_{G_2}(v)|+2-1 \!<\! |S|+|T|-1 \!=\! |S \cup T| \!=\! c_G(S \cup T),
\]
where the inequality holds since $\deg_{G_2}(v) \geq 3$. This contradicts our assumption, thus $u \neq v$.

We conclude that $u \in S \setminus \{v\}$. Since $u \neq f_1,g_1$, we have $\deg_{G_1}(u) \geq 2$. On the other hand, since $c_G((S \cup T) \setminus \{u\})=c_G(S \cup T)$, it follows that $u \in N_{G_1}(v)$ and $\deg_{G_1}(u)=2$.
\end{proof}

\begin{corollary} \label{T.2bUnmixed}
Let $G=F_{m_1} \circ \cdots \circ F_{m_k}$, where $m_i \geq 3$ for $i=1,\dots,k$. Then $J_G$ is unmixed.
\end{corollary}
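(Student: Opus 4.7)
The plan is to induct on $k$. For $k=1$, the ideal $J_{F_{m_1}}$ is Cohen-Macaulay by Proposition \ref{P.type1CM}, hence unmixed. For $k \geq 2$, set $H = F_{m_1} \circ \cdots \circ F_{m_{k-1}}$, so that $G = H \circ F_{m_k}$. By induction, $J_H$ is unmixed, and by Proposition \ref{P.type1CM}, $J_{F_{m_k}}$ is (Cohen-Macaulay, hence) unmixed. The strategy is then to invoke Theorem \ref{T.type2bBipartiteUnmixed} applied to $G = H \circ F_{m_k}$, so the main task is to check its hypotheses: the identification vertex $v$ has degree $\geq 3$ in each of $H$ and $F_{m_k}$, and each of them contains a vertex in $N(v)$ of degree $2$.

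The key observation is the degree structure of $F_m$ for $m \geq 3$. From the defining edge set, vertex $2i$ has degree $m-i+1$ and vertex $2j-1$ has degree $j$. In particular, the two leaves are $1$ and $2m$; their neighbours $2$ and $2m-1$ (which are the only possible identification vertices under $\circ$) have degree $m \geq 3$. Moreover, vertex $3$ is a degree-$2$ neighbour of $2$, and vertex $2m-2$ is a degree-$2$ neighbour of $2m-1$. Hence the side $F_{m_k}$ of the gluing satisfies the hypotheses immediately.

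For the side $H$, I would argue that the leaf of $H$ used in the operation comes from the \emph{unused} leaf of $F_{m_{k-1}}$ (the one not consumed in forming $F_{m_{k-2}} \circ F_{m_{k-1}}$ inside $H$). Its neighbour $v_H$ is therefore a cut vertex of $F_{m_{k-1}}$ which was \emph{not} identified in any previous $\circ$ step, and so the local neighbourhood of $v_H$ inside $H$ coincides with its neighbourhood inside $F_{m_{k-1}}$. Consequently $\deg_H(v_H) = m_{k-1} \geq 3$, and the degree-$2$ neighbour ($3$ or $2m_{k-1}-2$ of $F_{m_{k-1}}$) exhibited above persists as a degree-$2$ neighbour of $v_H$ in $H$, since its two neighbours in $F_{m_{k-1}}$ also escape every previous identification. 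Applying Theorem \ref{T.type2bBipartiteUnmixed} then yields the unmixedness of $J_G$.

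The main subtlety lies in this bookkeeping argument on $H$: one must be careful to specify which of the two leaves of each $F_{m_i}$ is used at each step, and verify that the cut vertex $v_H$ and its chosen degree-$2$ neighbour are both preserved intact by all prior $\circ$ operations. This is straightforward given the precise description of $F_m$, but should be made explicit to ensure the inductive hypothesis on $H$ actually feeds correctly into Theorem \ref{T.type2bBipartiteUnmixed}.
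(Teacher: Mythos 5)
Your proposal is correct and follows essentially the same route as the paper: induction on $k$, peeling off $F_{m_k}$ and applying Theorem \ref{T.type2bBipartiteUnmixed} to $\bigl(F_{m_1} \circ \cdots \circ F_{m_{k-1}}\bigr) \circ F_{m_k}$, with the degree-$2$ neighbour hypothesis verified from the explicit structure of $F_m$ (the paper states this more tersely as ``since $m_{k-1} \geq 3$, there exists $u_1 \in N_{G_1}(v)$ with $\deg_{G_1}(u_1)=2$''). Your extra bookkeeping about which leaf of $F_{m_{k-1}}$ survives and why the local neighbourhood of $v$ is untouched by earlier identifications is a legitimate and correct elaboration of the same argument.
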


\begin{proof}
Set $G_1=F_{m_1} \circ \cdots \circ F_{m_{k-1}}$, $G_2=F_{m_k}$ and let $v$ be the only vertex of $V(G_1) \cap V(G_2)$. We proceed by induction on $k \geq 2$. If $k=2$, the claim follows by Theorem \ref{T.type2bBipartiteUnmixed}, because $J_{G_1}$ and $J_{G_2}$ are unmixed by Proposition \ref{P.type1CM} and for $i=1,2$, there exists $u_i \in N_{G_i}(v)$ such that $\deg_{G_i}(u_i)=2$, by definition of $F_{m_i}$.

Now let $k>2$ and assume the claim true for $k-1$. By induction, $J_{G_1}$ is unmixed. Since $m_{k-1} \geq 3$, there exists $u_1 \in N_{G_1}(v)$ such that $\deg_{G_1}(u_1)=2$. The claim follows again by Theorem \ref{T.type2bBipartiteUnmixed}.
\end{proof}

\begin{remark}\label{R.2bRemark}
In Corollary \ref{T.2bUnmixed} the condition $m_i \geq 3$, for $i=2,\dots,k-1$, cannot be omitted. For instance, the binomial edge ideal $J_{F_3 \circ F_2 \circ F_3}$ is not unmixed: in fact $S=\{3,5,6,8\}$ is a cut set and $c_{F_3 \circ F_2 \circ F_3}(S)=4 \neq |S|+1$, see Figure \ref{F.counterexample2b}.

\begin{figure}[ht!]
\begin{tikzpicture}
\node[label={above:{\small $1$}}] (a) at (0,1.5) {};
\node[label={above:{\small $3$}}] (b) at (1,1.5) {};
\node[label={above:{\small $5$}}] (c) at (2,1.5) {};
\node[label={above:{\small $7$}}] (d) at (3,1.5) {};
\node[label={above:{\small $9$}}] (e) at (4,1.5) {};
\node[label={below:{\small $2$}}] (f) at (0,0) {};
\node[label={below:{\small $4$}}] (g) at (1,0) {};
\node[label={below:{\small $6$}}] (h) at (2,0) {};
\node[label={below:{\small $8$}}] (i) at (3,0) {};
\node[label={below:{\small $10$}}] (l) at (4,0) {};
\draw (0,1.5) -- (0,0) -- (1,1.5) -- (1,0) -- (2,1.5) -- (2,0) -- (3,1.5) -- (3,0) -- (4,1.5) -- (4,0)
(2,1.5) -- (0,0)
(4,1.5) -- (2,0);
\end{tikzpicture}
\caption{The graph $F_3 \circ F_2 \circ F_3$} \label{F.counterexample2b}
\end{figure}
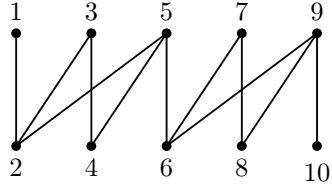
\noindent On the other hand, we may allow $m_1=m_k=2$, since, in this case, the graph $G=F_{m_1} \circ \cdots \circ F_{m_k}=F_1 \ast F_{m_2} \circ \cdots \circ F_{m_{k-1}} \ast F_1$. Hence, $J_G$ is unmixed by Theorem \ref{T.type2aCM} and Corollary \ref{T.2bUnmixed}.
\end{remark}

Let $n \geq 3$, $W_1 \sqcup \cdots \sqcup W_k$ be a partition of a subset of $[n]$ and $W_i=\{v_{i,1},\dots,v_{i,r_i}\}$ for some $r_i \geq 1$ and $i=1,\dots,k$. Let $E$ be the graph obtained from $K_n$ by adding a fan on each set $W_i$ in such a way that we attach a complete graph $K_{h+1}$ to $K_n$, with $V(K_n) \cap V(K_{h+1}) = \{v_{i,1},\dots,v_{i,h}\}$, for $i=1,\dots,k$ and $h=1,\dots,r_i$, see Figure \ref{F.graphE} (cf. Figure \ref{F.fan}). By Lemma \ref{L.fanOnCompleteGraphCM}, $J_E$ is Cohen-Macaulay.

\begin{figure}[ht!]
\begin{tikzpicture}[scale=1.4,line join=round]
\draw[lightgray] (0.,0.)-- (1.,0.)-- (1.5,0.8660254037844387)-- (1.,1.7320508075688776)-- (0.,1.7320508075688779)-- (-0.5,0.8660254037844395)-- (0.,0.)-- (0.,1.7320508075688779)-- (1.5,0.8660254037844387)-- (0,0)-- (1.,1.7320508075688776)-- (1,0)-- (-0.5,0.8660254037844395)-- (1.,1.7320508075688776);
\draw[lightgray] (-0.5,0.8660254037844395)-- (1.5,0.8660254037844387);
\draw[lightgray] (1,0)-- (0.,1.7320508075688779);
\draw (1.,1.7320508075688776)-- (0.,1.7320508075688779);
\draw (1.5,0.8660254037844387)-- (1.,1.7320508075688776);
\draw (0.5,2.7320508075688776)-- (1.,1.7320508075688776);
\draw (0.,1.7320508075688779)-- (0.5,2.7320508075688776);
\draw (-0.607029177625997,2.526730351479947)-- (0.,1.7320508075688779);
\draw (1.7169050039063332,2.4292216728336706)-- (1.,1.7320508075688776);
\draw (1.7169050039063332,2.4292216728336706)-- (1.5,0.8660254037844387);
\draw (1.7169050039063332,2.4292216728336706)-- (0.,1.7320508075688779);
\draw (0.,1.7320508075688779)-- (1.5,0.8660254037844387);
\node[lightgray] (a) at (0.,0.) {};
\node[lightgray] (b) at (1.,0.) {};
\node (c) at (1.5,0.8660254037844387) {};
\node (d) at (1.,1.7320508075688776) {};
\node (e) at (0.,1.7320508075688779) {};
\node[lightgray] (f) at (-0.5,0.8660254037844395) {};
\node (g) at (0.5,2.7320508075688776) {};
\node (i) at (-0.607029177625997,2.526730351479947) {};
\node (k) at (1.7169050039063332,2.4292216728336706) {};
\end{tikzpicture}
\caption{The graph $E$}\label{F.graphE}
\end{figure}
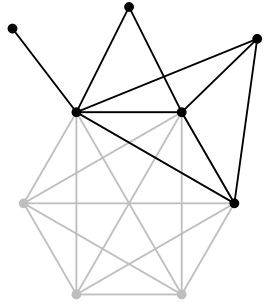

\begin{lemma} \label{L.2bAndFanUnmixed}
Let $G=F_{m_1} \circ \cdots \circ F_{m_k} \circ E$, where $E$ is the graph defined above, $m_i \geq 3$ for every $i=2,\dots,k$ and $V(F_{m_1} \circ \cdots \circ F_{m_k}) \cap V(E)=\{v\}$. Assume that $v \in W_1$ and $|W_1| \geq 2$. Then $J_G$ is unmixed.
\end{lemma}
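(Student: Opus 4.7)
The strategy is to set $H = F_{m_1} \circ \cdots \circ F_{m_k}$, so that $G = H \circ E$, and then invoke Theorem \ref{T.type2bBipartiteUnmixed} with $G_1 = H$ and $G_2 = E$. The two ingredients are already known to be unmixed: $J_H$ by Corollary \ref{T.2bUnmixed} (the constraint $m_1 \geq 3$ needed there is forced implicitly by the very definition of $\circ$ as soon as $k \geq 2$, and is vacuous for $k = 1$), and $J_E$ by Lemma \ref{L.fanOnCompleteGraphCM}, which actually gives Cohen-Macaulayness.

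It then remains to verify the two extra hypotheses of Theorem \ref{T.type2bBipartiteUnmixed}: that $v$ has degree at least $3$ in each of $H$ and $E$, and that $N_H(v)$ and $N_E(v)$ each contain a vertex of degree exactly $2$. On the $H$-side, $v$ coincides with the leaf-neighbour of the outermost factor $F_{m_k}$, namely vertex $2m_k - 1$ in the standard labeling; neither this vertex nor its immediate inner neighbour $2m_k - 2$ is touched by any previous $\circ$ operation (which only affects the opposite boundary of $F_{m_k}$). Hence $\deg_H(v) = m_k \geq 3$, and the vertex $2m_k - 2$, whose only neighbours in $F_{m_k}$ are $2m_k - 3$ and $2m_k - 1$, still has degree $2$ in $H$, supplying the required $u_1$. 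On the $E$-side, the leaves of $E$ are precisely the fan-tips $w_{i,1}$, each adjacent only to $v_{i,1}$; the hypothesis $v \in W_1$ therefore forces $v = v_{1,1}$. Then $\deg_E(v) \geq (n-1) + r_1 \geq 4$, since $n \geq 3$ and $r_1 = |W_1| \geq 2$; and $|W_1| \geq 2$ also guarantees the existence of the fan vertex $w_{1,2}$, which is adjacent to $v$ and has degree exactly $2$ in $E$ (its only neighbours being $v_{1,1}$ and $v_{1,2}$), giving the required $u_2$.

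With these verifications, Theorem \ref{T.type2bBipartiteUnmixed} immediately delivers unmixedness of $J_G$, and as a bonus furnishes the cut-set description \eqref{Eq.cutSets2b}. The only genuinely non-routine point is spotting the correct degree-$2$ neighbours in the two pieces; the hypotheses $m_k \geq 3$ and $|W_1| \geq 2$ of the lemma are exactly what makes these neighbours exist, and no further cut-set analysis is required beyond what is already encoded in Theorem \ref{T.type2bBipartiteUnmixed}.
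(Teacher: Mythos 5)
Your proposal is correct and follows essentially the same route as the paper: decompose $G=H\circ E$ with $H=F_{m_1}\circ\cdots\circ F_{m_k}$, get unmixedness of $J_H$ from Corollary \ref{T.2bUnmixed} and of $J_E$ from Lemma \ref{L.fanOnCompleteGraphCM}, exhibit the degree-two neighbours of $v$ (the vertex $2m_k-2$ of $F_{m_k}$ on one side, the tip of the attached $K_3$ guaranteed by $|W_1|\geq 2$ on the other), and conclude by Theorem \ref{T.type2bBipartiteUnmixed}. Your verification of the degree hypotheses is somewhat more explicit than the paper's, but the argument is the same.
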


\begin{proof}
Set $G_1=F_{m_1} \circ \cdots \circ F_{m_k}$ and $G_2=E$. Then $J_{G_1}$ is unmixed by Corollary \ref{T.2bUnmixed} and $J_{G_2}$ is Cohen-Macaulay by Lemma \ref{L.fanOnCompleteGraphCM}, hence it is unmixed.

Notice that, since $m_k \geq 3$, there exists $u_1 \in N_{G_1}(v)$ such that $\deg_{G_1}(u_1)=2$. Moreover, since $|W_1| \geq 2$ and by definition of $G_2=E$, we attach $K_3$ to $K_n$ in such a way that $|V(K_n) \cap V(K_3)|=2$ and $v \in V(K_n) \cap V(K_3)$. Thus, there exists $u_2 \in K_3$, hence $u_2 \in N_{G_2}(v)$, such that $\deg_{G_2}(u_2)=2$. The statement follows by Theorem \ref{T.type2bBipartiteUnmixed}.
\end{proof}

In Lemma \ref{L.2bAndFanUnmixed} we assume $|W_1| \geq 2$, since this is the only case we need in the following theorem. Moreover, in the next statement the case $F=E$ is useful to prove that the binomial edge ideal associated with $F_{m_1} \circ \cdots \circ F_{m_k} \circ F_n$ is Cohen-Macaulay.

\begin{theorem}\label{T.type2bCMbipartite}
Let $G=F_{m_1} \circ \cdots \circ F_{m_k} \circ F$, where $m_i \geq 3$ for every $i=1,\dots,k$ and $F=F_n$ for some $n \geq 3$ or $F=E$ is the same graph of Lemma {\rm \ref{L.2bAndFanUnmixed}}. Then $J_G$ is Cohen-Macaulay.
\end{theorem}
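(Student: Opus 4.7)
The plan is to adapt the template of the proof of Proposition~\ref{P.type1CM}: apply the short exact sequence from Remark~\ref{R.splittingPrimaryDecomposition} at a suitable cut vertex $v$, identify each of $R/A$, $R/B$, $R/(A+B)$ as Cohen-Macaulay by induction or by earlier results, and close with the Depth Lemma.

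Unmixedness of $J_G$ is already guaranteed by Corollary~\ref{T.2bUnmixed} (case $F=F_n$) and Lemma~\ref{L.2bAndFanUnmixed} (case $F=E$), giving $\dim(R/J_G)=|V(G)|+1$; the real task is to show $\depth(R/J_G)=|V(G)|+1$. I would set up a simultaneous induction covering both cases on a well-founded order refining the number of vertices of $G$, with the convention that an instance of type $F=E$ is declared smaller than any instance of type $F=F_n$ on the same vertex set (this refinement is necessary because the reduction $F_n\leadsto E$ described below does not decrease $|V(G)|$). The base cases $k=0$ are handled by Proposition~\ref{P.type1CM} and Lemma~\ref{L.fanOnCompleteGraphCM}.

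For the inductive step I would pick $v$ inside the last block $F$, mimicking the choice $v=n-1$ used in Proposition~\ref{P.type1CM}: when $F=F_n$, let $v$ be the vertex of degree $n$ adjacent to the remaining leaf of $G$ in $F$; when $F=E$, let $v$ be a top fan vertex. Using the cut-set descriptions of Theorem~\ref{T.type2bBipartiteUnmixed} and \eqref{Eq.cutSetsFan} one verifies the hypothesis $\mc M(G\setminus\{v\})=\{S\setminus\{v\}:S\in\mc M(G),\,v\in S\}$ of Remark~\ref{R.splittingPrimaryDecomposition}. With the notation of that remark, $A=J_H$ where $H$ is $G$ with $N_G(v)$ completed to a clique: for $F=F_n$ this converts the block $F_n$ into the graph $E$ of Lemma~\ref{L.2bAndFanUnmixed}, so $H=F_{m_1}\circ\cdots\circ F_{m_k}\circ E$ is covered by the $F=E$ branch of the induction, whereas for $F=E$ it becomes a chain joined via $\ast$ to an enlarged fan-on-complete-graph, CM by Lemma~\ref{L.fanOnCompleteGraphCM} combined with Theorem~\ref{T.type2aCM}. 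The ideal $B=(x_v,y_v)+J_{G\setminus\{v\}}$ involves the disjoint union of an isolated vertex (the former leaf) with a chain of the same form having $F$ replaced by a strictly smaller $F_{n-1}$ (or $E'$), CM by induction. Finally $A+B=(x_v,y_v)+J_{H\setminus\{v\}}$ is handled by the same ingredients. The Depth Lemma applied to
$$0\longrightarrow R/J_G\longrightarrow R/A\oplus R/B\longrightarrow R/(A+B)\longrightarrow 0$$
then yields $\depth(R/J_G)=|V(G)|+1$, as desired.

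The main obstacle will be the correct identification of the intermediate graphs $H$ and $H\setminus\{v\}$ as instances of Lemma~\ref{L.fanOnCompleteGraphCM} or of the theorem itself, together with the verification of the hypothesis of Remark~\ref{R.splittingPrimaryDecomposition} in each case. A subsidiary technical point is the degenerate subcase $n=3$ of the reduction $F_n\leadsto F_{n-1}$, where $F_{n-1}=F_2$ has maximum degree $2$ and hence cannot serve as a right factor in a $\circ$-composition; there the reduction must be rewritten using the operation $\ast$ at the last join, and Cohen-Macaulayness follows from Theorem~\ref{T.type2aCM} combined with the inductive hypothesis.
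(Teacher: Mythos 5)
Your template (split $J_G=A\cap B$ at a cut vertex via Remark~\ref{R.splittingPrimaryDecomposition}, identify $A$, $B$, $A+B$, apply the Depth Lemma) is the paper's, and your $F=F_n$ branch is a genuine variant: the paper splits at the junction vertex $w$ with $V(F_{m_1}\circ\cdots\circ F_{m_k})\cap V(F)=\{w\}$, which merges the last two blocks into a single fan-type block $F'$ and runs the induction on $k$, whereas you split at the other cut vertex of the last block, which converts $F_n$ into $E$ on the same vertex set. That half of your argument, including the $n=3$ degeneracy you flag, looks workable.

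The gap is the $F=E$ branch. The vertex you propose there --- ``a top fan vertex'' of $E$ --- is not a cut vertex of $G$, so the hypothesis of Remark~\ref{R.splittingPrimaryDecomposition} cannot hold: that hypothesis forces $\emptyset=S\setminus\{v\}$ for some cut set $S\ni v$, i.e.\ $\{v\}\in\mc M(G)$. In the graph $E$ produced by your own reduction $F_n\leadsto E$ (central clique on $\{2,4,\dots,2n,2n-1\}$ with a single fan on $W_1=\{2,4,\dots,2n-2\}$, junction at $v_{1,1}=2$), every vertex other than the junction is either a free vertex (the private vertices of the fan pieces and the clique vertices outside $W_1$) or a vertex $v_{1,j}$ with $j\ge 2$, whose removal leaves each fan piece attached through $v_{1,1},\dots,v_{1,j-1}$ and hence does not disconnect $G$. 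So there is no admissible splitting vertex inside $E$ except the junction with $F_{m_k}$, and your identification of the resulting $H$ as ``a chain joined via $\ast$ to an enlarged fan-on-complete-graph'' is also incorrect: completing $N_G(v)$ for $v$ in the $E$-block never produces a $\ast$-join, since the junction has degree at least $3$ on both sides. The only repair is to split the $E$-instances at the junction vertex, which is exactly what the paper does for both branches; note that this also breaks your well-founded order, because the resulting $H=F_{m_1}\circ\cdots\circ F_{m_{k-1}}\circ F'$ is again of type $E$ on the same vertex set, so you must order by $k$ (and $m_k$) as the paper implicitly does, not by vertex count refined by type.
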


\begin{proof}
Let $V(F_{m_1} \circ \dots \circ F_{m_k}) \cap V(F)= \{w\}$ and call $f_k$ and $f$ the leaves that we remove from $F_{m_1} \circ \cdots \circ F_{m_k}$ and $F$. Let $J_G=\bigcap_{S \in \mathcal M(G)} P_S(G)$ be the primary decomposition of $J_G$ and set $A=\bigcap_{S \in \mathcal M(G), w \notin S} P_S(G)$ and $B=\bigcap_{S \in \mathcal M(G), w \in S} P_S(G)$.

We proceed by induction on $k \geq 1$. First assume $k=1$ and, for simplicity, let $m=m_1$.
By Remark \ref{R.splittingPrimaryDecomposition}, the ideal $A$ is the binomial edge ideal of the graph $H$, obtained by adding a fan to the complete graph with vertex set $\{w\} \cup N_G(w)$ on the sets $N_{F_m}(w) \setminus \{f_k\}$ and $N_F(w) \setminus \{f\}$. Hence $R/A$ is Cohen-Macaulay and $\depth(R/A)=|V(G)|+1$ by Lemma \ref{L.fanOnCompleteGraphCM}.

Notice that $G \setminus \{w\}=(F_m \setminus \{w, f_k\}) \sqcup (F \setminus \{w, f\})$. By Theorem \ref{T.type2bBipartiteUnmixed} and Remark \ref{R.splittingPrimaryDecomposition}, $B=(x_w,y_w)+J_{F_m \setminus \{w, f_k\}} + J_{F \setminus \{w, f\}}$, where $F_m \setminus \{w,f\} \cong F_{m-1}$. Moreover, if $F=E$, $E \setminus \{w,f\}$ is of the same form as $E$, otherwise $F=F_n$ and $F_n \setminus \{w,f\} \cong F_{n-1}$. In any case, $J_{F_m \setminus \{w, f_k\}}$ and $J_{F \setminus \{w, f\}}$ are Cohen-Macaulay (by Lemma \ref{L.fanOnCompleteGraphCM} and Proposition \ref{P.type1CM}), hence $B$ is Cohen-Macaulay since it is the sum of Cohen-Macaulay ideals on disjoint sets of variables. In particular, it follows from the formula for the dimension \cite[Corollary 3.4]{HHHKR10} that $\depth(R/B)=|V(F_{m-1})|+1+|V(F \setminus \{w,f\})|+1=|V(G)|+1$.

Finally, $A+B=(x_w,y_w)+J_{H''}$, where $H''=H \setminus \{w\}$ is the binomial edge ideal of the graph obtained by adding a fan to the complete graph with vertex set $N_G(w)$ on the sets $N_{F_m}(w) \setminus \{f_k\}$ and $N_F(w) \setminus \{f\}$. Hence $R/(A+B)$ is Cohen-Macaulay and $\depth(R/(A+B))=|V(G)|$ by Lemma \ref{L.fanOnCompleteGraphCM}.

The Depth Lemma applied to the short exact sequence \eqref{Eq.shortExactSeq} yields $\depth(R/J_G)=|V(G)|+1$. The claim follows by Lemma \ref{L.2bAndFanUnmixed} (resp. Corollary \ref{T.2bUnmixed}), since $\dim(R/J_G)=|V(G)|+1$.

Now let $k>1$ and assume the claim true for $k-1$. By Remark \ref{R.splittingPrimaryDecomposition}, the ideal $A$ is the binomial edge ideal of the graph $H=F_1 \circ \cdots \circ F_{m_{k-1}} \circ F'$, where $F'$ is obtained by adding a fan to the complete graph with vertex set $\{w\} \cup N_G(w)$ on the sets $N_{F_{m_k}}(w) \setminus \{f_k\}$ and $N_F(w) \setminus \{f\}$. Notice that, since $m_k \geq 3$, $|N_{F_{m_k}}(w) \setminus \{f_k\}| \geq 2$ and we are in the assumption of Lemma \ref{L.2bAndFanUnmixed}. Hence, $R/A$ is Cohen-Macaulay by induction and $\depth(R/A)=|V(G)|+1$.

Similarly to the case $k=1$, the ideal $B$ equals $(x_w,y_w)+J_{(F_{m_1} \circ \cdots \circ F_{m_k}) \setminus \{w, f_k\}} + J_{F \setminus \{w, f\}}$, where $(F_{m_1} \circ \cdots \circ F_{m_k}) \setminus \{w, f_k\} \cong F_{m_1} \circ \cdots \circ F_{m_k-1}$ and $J_{F_{m_1} \circ \cdots \circ F_{m_k-1}}$ is Cohen-Macaulay by induction (notice that, if $m_k=3$, then $F_{m_1} \circ \cdots \circ F_{m_k}=F_{m_1} \circ \cdots \circ F_{m_{k-1}} \ast F_1$ and the corresponding binomial edge ideal is Cohen-Macaulay by induction and Theorem \ref{T.type2aCM}). Moreover, if $F=E$, then $E \setminus \{w\}$ is of the same form as $E$, otherwise $F=F_n$ and $F_n \setminus \{w,f\} \cong F_{n-1}$. Thus $J_{F \setminus \{w,f\}}$ is Cohen-Macaulay (by Lemma \ref{L.fanOnCompleteGraphCM} and Proposition \ref{P.type1CM}), hence $B$ is Cohen-Macaulay since it is the sum of Cohen-Macaulay ideals on disjoint sets of variables. In particular, $\depth(R/B)=|V(F_{m-1})|+1+|V(F \setminus \{w,f\})|+1=|V(G)|+1$ (it follows from the formula for the dimension \cite[Corollary 3.4]{HHHKR10}).

Finally, $A+B=(x_w,y_w)+J_{H''}$, where $H''=H \setminus \{w\}$ (again, since $m_k \geq 3$, we have $|N_{F_{m_k}}(w) \setminus \{f_k\}| \geq 2$). Hence $R/(A+B)$ is Cohen-Macaulay by induction and $\depth(R/(A+B))=|V(G)|$.

The Depth Lemma applied to the short exact sequence \eqref{Eq.shortExactSeq} yields $\depth(R/J_G)=|V(G)|+1$. Notice that, if $F=E$, the ideal $J_G$ is unmixed by Lemma \ref{L.2bAndFanUnmixed}, whereas, if $F=F_n$, it is unmixed by Corollary \ref{T.2bUnmixed}. This implies that $\dim(R/J_G)=|V(G)|+1$ and the claim follows.
\end{proof}

\section{The dual graph of binomial edge ideals} \label{S.dualGraph}

In this section we study the dual graph of binomial edge ideals. This is one of the main tools to prove that, if $G$ is bipartite and $J_G$ is Cohen-Macaulay, then $G$ can be obtained recursively via a sequence of operations $\ast$ and $\circ$ on a finite set of graphs of the form $F_m$, Theorem \ref{T.CHARACTERIZATION} c).

Let $I$ be an ideal in a polynomial ring $A=K[x_1,\dots,x_n]$ and let $\mathfrak p_1, \dots, \mathfrak p_r$ be the minimal prime ideals of $I$. Following \cite{BV15}, the \textit{dual graph} $\mathcal D(I)$ of $I$ is a graph with vertex set $\{1,\dots,r\}$ and edge set
\[
\{\{i,j\} : \hgt(\mathfrak p_i+\mathfrak p_j)-1=\hgt(\mathfrak p_i)=\hgt(\mathfrak p_j)=\hgt(I)\}.
\]

Notice that, if $\mc D(I)$ is connected, then $I$ is unmixed. In \cite{H62}, Hartshorne proved that if $A/I$ is Cohen-Macaulay, then $\mathcal{D}(I)$ is connected. We will show that this is indeed an equivalence for binomial edge ideals of bipartite graphs. Nevertheless, this does not hold when $G$ is not bipartite, see Remark \ref{connessononCM}.

To ease the notation, we denote by $\mathcal D(G)$ the dual graph of the binomial edge ideal $J_G$ of a graph $G$. Moreover, we denote by $P_S(G)$ or $P_S$ both the minimal primes of $J_G$ and the vertices of $\mathcal D(G)$.

\begin{remark}\label{connessononCM}
The dual graph of the non-bipartite graph $G$ in Figure \ref{F.unmixedDualConnected} is connected, see Figure \ref{F.DualUnmixedDualConnected}, but using \texttt{Macaulay2} \cite{Mac2} one can check that $J_G$ is not Cohen-Macaulay.

\begin{figure}[ht!]
\begin{subfigure}[c]{0.4\textwidth}
\centering
\begin{tikzpicture}[scale=0.9]
\draw (0.,0.)-- (2.,0.);
\draw (2.,0.)-- (2.,2.);
\draw (2.,2.)-- (0.,2.);
\draw (0.,2.)-- (0.,0.);
\draw (2.,0.)-- (4.,0.);
\draw (2.,2.)-- (4.,2.);
\draw (0.,0.)-- (1.,1.);
\draw (1.,1.)-- (0.,2.);
\draw (0.,2.)-- (-1.,1.);
\draw (-1.,1.)-- (0.,0.);
\node[label={below:$4$}] (a) at (0.,0.) {};
\node[label={below:$5$}] (b) at (2.,0.) {};
\node[label={above:$2$}] (c) at (2.,2.) {};
\node[label={above:$3$}] (d) at (0.,2.) {};
\node[label={below:$6$}] (e) at (4.,0.) {};
\node[label={above:$1$}] (f) at (4.,2.) {};
\node[label={right:$7$}] (g) at (1.,1.) {};
\node[label={left:$8$}] (h) at (-1.,1.) {};
\end{tikzpicture}
\caption{The graph $G$} \label{F.unmixedDualConnected}
\end{subfigure}
\begin{subfigure}[c]{0.5\textwidth}
\centering
\begin{tikzpicture}[scale=1.1]
\draw (0.,0.)-- (0.7071067811865476,0.7071067811865475);
\draw (0.,0.)-- (0.7071067811865476,-0.7071067811865475);
\draw (0.7071067811865476,0.7071067811865475)-- (1.4142135623730951,0.);
\draw (1.4142135623730951,0.)-- (0.7071067811865476,-0.7071067811865475);
\draw (0.7071067811865476,0.7071067811865475)-- (1.4142135623730951,1.414213562373095);
\draw (1.4142135623730951,1.414213562373095)-- (2.621320343559643,0.7071067811865476);
\draw (2.621320343559643,0.7071067811865476)-- (3.3284271247461907,0.);
\draw (0.7071067811865476,-0.7071067811865475)-- (1.4142135623730951,-1.414213562373095);
\draw (1.4142135623730951,1.414213562373095)-- (1.4142135623730951,0.);
\draw (1.4142135623730951,0.)-- (1.4142135623730951,-1.414213562373095);
\draw (1.4142135623730951,-1.414213562373095)-- (2.621320343559643,-0.7071067811865474);
\draw (2.621320343559643,-0.7071067811865474)-- (2.621320343559643,0.7071067811865476);
\draw (2.621320343559643,-0.7071067811865474)-- (3.3284271247461907,0.);
\node[label={left:$P_\emptyset$}] (a) at (0.,0.) {};
\node[label={above left:$P_{\{5\}}$}] (b) at (0.7071067811865476,0.7071067811865475) {};
\node[label={below left:$P_{\{2\}}$}] (c) at (0.7071067811865476,-0.7071067811865475) {};
\node[label={right:$P_{\{2,5\}}$}] (d) at (1.4142135623730951,0.) {};
\node[label={[label distance=-2mm]90:$P_{\{3,5\}}$}] (e) at (1.4142135623730951,1.414213562373095) {};
\node[label={[label distance=-2mm]270:$P_{\{2,4\}}$}] (f) at (1.4142135623730951,-1.414213562373095) {};
\node[label={above right:$P_{\{3,4,5\}}$}] (g) at (2.621320343559643,0.7071067811865476) {};
\node[label={below right:$P_{\{2,3,4\}}$}] (h) at (2.621320343559643,-0.7071067811865474) {};
\node[label={right:$P_{\{3,4\}}$}] (i) at (3.3284271247461907,0.) {};
\end{tikzpicture}
\caption{The dual graph of $G$} \label{F.DualUnmixedDualConnected}
\end{subfigure}
\caption{}
\end{figure}
\end{remark}

We now describe the edges of the dual graph of $J_G$, when $J_G$ is unmixed. This result holds for non-bipartite graphs as well.

\begin{theorem} \label{Dual graph}
Let $G$ be a graph such that $J_G$ is unmixed and let $S,T \in \mc M(G)$, with $|T| \geq |S|$. Denote by $P_S$ the minimal primes of $J_G$. Then the following properties hold:
\begin{itemize}
\item[{\rm a)}] if $|T \setminus S| > 1$, then $\{P_S,P_T\}$ is not an edge of $\mathcal{D}(G)$;
\item[{\rm b)}] if $|T \setminus S| = 1$ and $S \subset T$, then $\{P_S,P_T\}$ is an edge of $\mathcal{D}(G)$;
\item[{\rm c)}] if $T \setminus S = \{t\}$ and $S \nsubseteq T$, then $\{P_S,P_T\}$ is an edge of $\mathcal{D}(G)$ if and only if $t$ is not a cut vertex of $G_{\overline S}$.
\end{itemize}
\end{theorem}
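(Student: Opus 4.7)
The plan is to compute $\hgt(P_S+P_T)$ directly and compare it with $\hgt(P_S)+1=n$. Unmixedness gives $c_G(S')=|S'|+1$ for all $S' \in \mc M(G)$, so $\hgt(P_{S'})=n-1$ and $\{P_S,P_T\}$ is an edge iff $\hgt(P_S+P_T)=n$. Setting $U=S\cup T$ and $W=[n]\setminus U$, inspection of the generators gives
\[
P_S+P_T=(x_i,y_i : i \in U)+J_M,
\]
where $M$ is the graph on $W$ whose edges are the pairs $\{a,b\}$ contained in a common $V(G_i^S)\setminus T$ or in a common $V(G_j^T)\setminus S$. Hence $\hgt(P_S+P_T)=2|U|+\hgt(J_M)$, and the analysis reduces to studying $J_M$ in each of the three cases.

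For (a) I would use the chain $\hgt(P_S+P_T) \geq \hgt(P_S+(x_j,y_j : j\in T\setminus S)) = 2|U|+\sum_i(|A_i|-1)$, the sum taken over the non-empty $A_i:=V(G_i^S)\setminus T$. Unmixedness bounds the number of non-empty $A_i$ by $|S|+1=c_G(S)$, and a direct arithmetic yields $\hgt(P_S+P_T) \geq n+|T\setminus S|-1$, which strictly exceeds $n$ as soon as $|T\setminus S|\geq 2$.

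For (b), $T=S\cup\{t\}$ and unmixedness of $T$ forces $t$ to be a cut vertex of $G_{\overline S}$ splitting exactly one component into exactly two pieces. Thus the partition $\{B_j\}$ of $W$ refines $\{A_i\}$, every $B_j$-clique is absorbed by an $A_i$-clique, and $M$ is the disjoint union of $|S|+1$ complete graphs on $A_1, \ldots, A_{|S|+1}$; consequently $\hgt(J_M)=|W|-(|S|+1)=n-2|S|-2$, giving $\hgt(P_S+P_T)=n$, an edge.

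For (c), the hypotheses $|T\setminus S|=1$, $|T|\geq|S|$ and $S\not\subset T$ force $|S|=|T|$, $S=S_0\cup\{s\}$, $T=S_0\cup\{t\}$ with $s\neq t$ and $S_0=S\cap T$. Let $G_1^S$ be the component of $G_{\overline S}$ containing $t$ (unmixedness applied to $T$ rules out $V(G_1^S)=\{t\}$) and let $k\geq 1$ be the number of connected components of $G_1^S\setminus\{t\}$, so that $t$ is a cut vertex of $G_{\overline S}$ iff $k\geq 2$. A counting argument using $c_G(T)=|T|+1$ together with $c_G(S\setminus\{s\})<c_G(S)$ determines the adjacencies of $s$ in $G$: if $k=1$ the partitions $\{A_i\}$ and $\{B_j\}$ of $W$ coincide, $M$ is again a disjoint union of $|S|+1$ complete graphs, $\hgt(J_M)=n-2|S|-2$, and $\{P_S,P_T\}$ is an edge; if $k\geq 2$, writing $k_2$ for the number of $A_i$ with $i\neq 1$ that $s$ is adjacent to, the same argument produces $k_2\geq 1$ (the case $k_2=0$ would contradict $S \in \mc M(G)$), and a case analysis of the resulting $M$ (splitting further on whether $s$ is adjacent to any piece of $G_1^S \setminus \{t\}$) gives $c_M<|S|+1$ and $\hgt(J_M)=|W|-c_M$, whence $\hgt(P_S+P_T)=n+|U|-c_M\geq n+1$, not an edge. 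The main obstacle is the bookkeeping in case (c): one must carefully translate the unmixedness of $J_G$ applied to both $S$ and $T$ into the key constraint $k_2\geq 1$ in the cut-vertex sub-case, and verify in each sub-case that the auxiliary graph $M$ really satisfies $\hgt(J_M)=|W|-c_M$ (which amounts to checking that the only possible additional cut set of the merged component $M^*$ gives at least the height $|V(M^*)|-1$).
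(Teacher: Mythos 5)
Your proposal is correct and follows essentially the same route as the paper: both reduce the edge condition to computing $\hgt(P_S+P_T)$ from the way the components of $G_{\overline S}$ and $G_{\overline T}$ overlap on $\overline{S\cup T}$, with your auxiliary graph $M$ playing the role of the paper's explicit prime-ideal descriptions (its glued component in case (c) is exactly the paper's graph $F$, whose height must be checked by hand). The key points you isolate --- the ``exactly two pieces'' fact in (b), the constraint $k_2\ge 1$ (equivalently $A_1\neq B_1$) in (c), and the verification that $\hgt(J_M)=|W|-c_M$ for the merged component --- are precisely the nontrivial steps of the paper's own case analysis.
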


\begin{proof} Let $E_1, E_2, \dots, E_{c(S)}$ be the connected components of $G_{\overline S}$.

a) \ Let $v,w \in T \setminus S$. Then $P_S + P_T \supseteq P_S + (x_{v},x_{w},y_{v},y_{w})$. If $E_j$ and $E_k$ are the connected components of $G_{\overline S}$ containing $v$ and $w$ respectively (possibly $j=k$), it follows that
$$
P_S + (x_{v},x_{w},y_{v},y_{w}) = \left( \, \bigcup_{i \in S \cup \{v,w\}} \{x_i, y_i\}, J_{\widetilde{E}_1}, \dots, J_{(\widetilde{E}_j)_{\overline{\{v\}}}}, \dots, J_{(\widetilde{E}_k)_{\overline{\{w\}}}} \dots, J_{\widetilde{E}_{c(S)}} \, \right).
$$
Thus, $\hgt(P_S+P_T) \geq \hgt(P_S + (x_{v},x_{w},y_{v},y_{w})) = \hgt(P_S)+4-2=\hgt(P_S)+2$. Hence, $\{P_S,P_T\}$ is not an edge of $\mathcal{D}(G)$.

b) \ Let $T \setminus S=\{t\}$ and let $E_j$ be the connected component of $G_{\overline S}$ containing $t$. Then
$$P_S + P_T= \left( \, \bigcup_{i \in S} \ \{x_i, y_i\}, (x_t,y_t), J_{\widetilde{E}_1}, \dots, J_{(\widetilde{E}_{j})_{\overline{\{t\}}}}, \dots, J_{\widetilde{E}_{c(S)}} \, \right).
$$
Thus, $\hgt(P_S+P_T)=\hgt(P_S)+2-1=\hgt(P_S)+1$. Hence, $\{P_S,P_T\}$ is an edge of $\mathcal{D}(G)$.

c) \ Let $G_1,G_2,\dots,G_r$ be the connected components of $G_{\overline{S \cap T}}$. Let also $S \setminus T=\{s\}$, $T \setminus S=\{t\}$ and assume that $s \in G_j$ and $t \in G_k$. Since $S$, $T \in \mc M(G)$, it follows that $s$ and $t$ are cut vertices of $G_j$ and $G_k$, respectively.

If $j \neq k$, then $t$ is a cut vertex of $G_{\overline S}$. Moreover, if $V(G_j)=V(E_1 \cup \dots \cup E_h \cup\{s\})$, where $h \geq 2$ and $G_k=E_{h+1}$, then
$$
P_S + P_T= \left( \, \bigcup_{i \in S \cup \{t\}} \{x_i, y_i\}, J_{(\widetilde{G}_j)_{\overline{\{s\}}}}, J_{(\widetilde{E}_{h+1})_{\overline{\{t\}}}}, J_{\widetilde{E}_{h+2}}, \dots, J_{\widetilde{E}_{c(S)}} \, \right).
$$
It follows that $\hgt(P_S+P_T)=\hgt(P_S)+2+|V(G_j)|-2-\sum_{i=1}^h (|V(E_i)|-1) -1=\hgt(P_S)+2+1-2+h-1=\hgt(P_S)+h > \hgt(P_S)+1$. Thus, $\{P_S,P_T\}$ is not an edge of $\mathcal{D}(G)$.

Assume now that $j = k$ and let $j=1$ for simplicity. Denote by $H_1, \dots, H_i$ the connected components of $(G_1)_{\overline{\{s\}}}$ and by $K_1, \dots, K_i$ the connected components of $(G_1)_{\overline{\{t\}}}$ (note that the number of components is the same because $S,T \in \mc M(G)$ and $J_G$ is unmixed). Suppose also that $t \in H_1$ and $s \in K_1$. If there exists $v \in H_p \cap K_q$ with $p,q \neq 1$, then, since $v \in H_p$, there exists a path from $v$ to $s$ that does not involve $t$. This is a contradiction because $v \in K_q$ and $s \in K_1$. Hence, $K_q \subseteq H_1$ and $H_p \subseteq K_1$ for all $p,q=2,\dots,i$. In particular, the connected components of $G_{\overline{S \cup T}}$ are $H_2, \dots, H_i, K_2, \dots, K_i, G_2, \dots, G_r$ and the connected components of $H_1 \cap K_1$, if it is not empty.

Suppose first that $H_1 \cap K_1 = \emptyset$. Hence, $V(H_1)=V(K_2 \cup \dots \cup K_i \cup \{t\})$ and $V(K_1)=V(H_2 \cup \dots \cup H_i \cup \{s\})$. If $i \geq 3$, then $t$ is a cut vertex of $H_1$, hence a cut vertex of $G_{\overline S}$. It follows that
\begin{gather*}
P_S = \left( \, \bigcup_{h \in S} \ \{x_h, y_h\}, J_{\widetilde{H}_1}, J_{\widetilde{H}_2}, \dots, J_{\widetilde{H}_i}, J_{\widetilde{G}_2}, \dots, J_{\widetilde{G}_r} \, \right) \text{ and} \\
P_S + P_T= \left( \, \bigcup_{h \in S \cup \{t\}} \{x_h, y_h\}, J_{(\widetilde{H}_1)_{\overline{\{t\}}}}, J_{(\widetilde{K}_1)_{\overline{\{s\}}}}, J_{\widetilde{G}_2}, \dots, J_{\widetilde{G}_r} \, \right).
\end{gather*}
Therefore, $\hgt(P_S+P_T)=\hgt(P_S)+2-1-\sum_{h=2}^i (|V(H_h)|-1)+|V(K_1)|-2=\hgt(P_S)+1-\sum_{h=2}^i |V(H_h)|+(i-1) +(\sum_{h=2}^i |V(H_h)| +1)-2=\hgt(P_S)+i-1 > \hgt(P_S) +1$, since $i \geq 3$. Thus, $\{P_S,P_T\}$ is not an edge of $\mathcal{D}(G)$.

On the other hand, if $i=2$, then $t$ is not a cut vertex of $H_1$, since $K_2$ is connected. Therefore, $t$ is not a cut vertex of $G_{\overline S}$. It follows that
$$
P_S + P_T= \left( \, \bigcup_{h \in S\cup \{t\}} \{x_h, y_h\}, J_{(\widetilde{H}_1)_{\overline{\{t\}}}}, J_{\widetilde{H}_2}, J_{\widetilde{G}_2}, \dots, J_{\widetilde{G}_r} \, \right).
$$
Hence, $\hgt(P_S+P_T)=\hgt(P_S)+2-1=\hgt(P_S)+1$ and $\{P_S,P_T\}$ is an edge of $\mathcal{D}(G)$.

Let now $H_1 \cap K_1 \neq \emptyset$. It follows that
\begin{equation*}
V(H_1)=V(K_2 \cup \dots \cup K_i \cup (H_1 \cap K_1) \cup \{t\}) \text{\quad and \quad} V(K_1)=V(H_2 \cup \dots \cup H_i \cup (H_1 \cap K_1) \cup \{s\})
\end{equation*}
and in this case $t$ is a cut vertex of $G_{\overline S}$. Moreover,
$$
P_S + P_T= \left( \, \bigcup_{h \in S \cup \{t\}} \{x_h, y_h\}, J_{(\widetilde{H}_1)_{\overline{\{t\}}}},J_{(\widetilde{K}_1)_{\overline{\{s\}}}}, J_{\widetilde{G}_{2}} \dots, J_{\widetilde{G}_r} \, \right).
$$
In fact, $J_{\widetilde{H}_h} \subseteq J_{(\widetilde{K}_1)_{\overline{\{s\}}}}$ and $J_{\widetilde{K}_h} \subseteq J_{(\widetilde{H}_1)_{\overline{\{t\}}}}$ for all $h=2, \dots, i$. We now compute the height of $J=J_{(\widetilde{H}_1)_{\overline{\{t\}}}} + J_{(\widetilde{K}_1)_{\overline{\{s\}}}}$. Setting $W_1=H_2 \cup \dots \cup H_i$ and $W_2=K_2 \cup \dots \cup K_i$, the ideal $J$ is the binomial edge ideal of the graph $F$ obtained from $\widetilde{W}_1 \cup \widetilde{W}_2 \cup (\widetilde{H}_1 \cap \widetilde{K}_1)$ by adding the edges $\{\{v,w\} : v \in \widetilde{H}_1 \cap \widetilde{K}_1, w \in \widetilde{W}_1 \cup \widetilde{W}_2\}$. It is easy to check that the only cut sets of $F$ are $\emptyset$ and $\widetilde{H}_1 \cap \widetilde{K}_1$. Moreover,
\begin{align*}
\hgt(P_{\widetilde{H}_1 \cap \widetilde{K}_1}(F)) &= |V(F)|+|V(\widetilde{H}_1 \cap \widetilde{K}_1)|-2 \geq |V(F)|-1 = \hgt(P_{\emptyset}(F))\\ &= |V(\widetilde{W}_1)|+|V(\widetilde{W}_2)|+|V(\widetilde{H}_1 \cap \widetilde{K}_1)|-1.
\end{align*}
Thus $\hgt(J)=|V(F)|-1=\sum_{h=1}^i |V(H_h)| -2$. Since $i \geq 2$, we get
\begin{equation*}
  \hgt(P_S+P_T) =\hgt(P_S)+2-\sum_{h=1}^i \left( |V(H_h)|-1 \right) + \sum_{h=1}^i |V(H_h)| -2=\hgt(P_S)+i>\hgt(P_S)+1.
\end{equation*}
Hence, $\{P_S,P_T\}$ is not an edge of $\mathcal{D}(G)$.
\end{proof}

\begin{remark}
Recall that a graph is \textit{$k$-connected} if it has more than $k$ vertices and the removal of any $h<k$ vertices does not disconnect the graph. In particular, every non-empty connected graph, which is not reduced to a single vertex, is $1$-connected.

Let $G$ be a connected graph such that $\mc D(G)$ is connected. If $G$ is not the complete graph, then $G$ is $1$-connected but not $2$-connected. In fact, if $G$ is $2$-connected, then $G$ does not have cut vertices and, by Theorem \ref{Dual graph} a), it follows that $P_{\emptyset}$ is an isolated vertex of the dual graph $\mathcal D(G)$, a contradiction. Notice that, if $G$ is bipartite, by Lemma \ref{P.unmixed2leaves}, it is enough to require $J_G$ to be unmixed. Nevertheless, in the non-bipartite case we need to assume $\mc D(G)$ connected. In fact, the graph $G$ in Figure \ref{F.unmixed2connected} is $2$-connected, $J_G$ is unmixed and $\mc D(G)$ consists of two isolated vertices.

We also observe that the above statement generalizes \cite[Proposition 3.10]{BN17}, since having a connected dual graph is a weaker condition (see also \cite[Corollary 2.4]{H62}). In particular, being not $2$-connected is a necessary condition for $J_G$ to be Cohen-Macaulay.

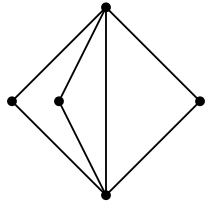
\begin{figure}[ht!]
\begin{tikzpicture}
\draw (0.,0.)-- (0.,2.5);
\draw (0.,2.5)-- (1.25,1.25);
\draw (1.25,1.25)-- (0.,0.);
\draw (0.,0.)-- (-1.25,1.25);
\draw (-1.25,1.25)-- (0.,2.5);
\draw (0.,2.5)-- (-0.625,1.25);
\draw (-0.625,1.25)-- (0.,0.);
\node (a) at (0.,0.) {};
\node (b) at (0.,2.5) {};
\node (c) at (-1.25,1.25) {};
\node (d) at (1.25,1.25) {};
\node (e) at (-0.625,1.25) {};
\end{tikzpicture}
\caption{A $2$-connected graph $G$ with $J_G$ unmixed and $\mc D(G)$ disconnected} \label{F.unmixed2connected}
\end{figure}
\end{remark}

\begin{example}\label{E.unmixedNotCM2}
For every $k \geq 4$, let $M_{k,k}$ and $M_{k-1,k}$ be the graphs defined in Example \ref{E.unmixedNotCM1}. With the same notation used there and by Theorem \ref{Dual graph}, their dual graphs are represented in Figure \ref{F.dualGraphRagni}.

\begin{figure}[ht!]
\begin{subfigure}[c]{0.3\textwidth}
\centering
\begin{tikzpicture}[scale=1.1]
\draw (0.,0.)-- (0.7071067811865476,0.7071067811865475);
\draw (0.,0.)-- (0.7071067811865476,-0.7071067811865475);
\draw (0.7071067811865476,0.7071067811865475)-- (1.4142135623730951,0.);
\draw (1.4142135623730951,0.)-- (0.7071067811865476,-0.7071067811865475);
\node[label={left:{\small $P_\emptyset$}}] (a) at (0.,0.) {};
\node[label={[label distance=-1mm]90:{\small $P_{\{2\}}$}}] (b) at (0.7071067811865476,0.7071067811865475) {};
\node[label={[label distance=-4mm]270:{\small $P_{\{2k-1\}}$}}] (c) at (0.7071067811865476,-0.7071067811865475) {};
\node[label={right:{\small $P_{\{2,2k-1\}}$}}] (d) at (1.4142135623730951,0.) {};
\node[label={[label distance=-3mm]90:{\small $P_{V_1 \setminus \{1\}}$}}] (e) at (2.5,0.7071067811865475) {};
\node[label={[label distance=-4mm]270:{\small $P_{V_2 \setminus \{2k\}}$}}] (f) at (2.5,-0.7071067811865475) {};
\end{tikzpicture}
\caption{The dual graph of $J_{M_{k,k}}$} \label{F.dualGraphM_(k,k)}
\end{subfigure}
\begin{subfigure}[c]{0.3\textwidth}
\centering
\begin{tikzpicture}[scale=1.1]
\draw (0.,0.)-- (0.7071067811865476,0.7071067811865475);
\draw (0.,0.)-- (0.7071067811865476,-0.7071067811865475);
\draw (0.7071067811865476,0.7071067811865475)-- (1.4142135623730951,0.);
\draw (1.4142135623730951,0.)-- (0.7071067811865476,-0.7071067811865475);
\draw (2.4142135623730951,0.)-- (1.4142135623730951,0.);
\node[label={left:{\small $P_\emptyset$}}] (a) at (0.,0.) {};
\node[label={[label distance=-1mm]90:{\small $P_{\{2\}}$}}] (b) at (0.7071067811865476,0.7071067811865475) {};
\node[label={[label distance=-1mm]270:{\small $P_{\{6\}}$}}] (c) at (0.7071067811865476,-0.7071067811865475) {};
\node[label={[label distance=-0.5mm]290:{\small $P_{\{2,6\}}$}}] (d) at (1.4142135623730951,0.) {};
\node[label={[label distance=-1mm]290:{\small $P_{\{2,4,6\}}$}}] (d) at (2.4142135623730951,0) {};
\node[label={[label distance=-2mm]90:{\small $P_{\{3,5\}}$}}] (e) at (2.4142135623730951,0.7071067811865475) {};
\end{tikzpicture}
\caption{The dual graph of $J_{M_{3,4}}$} \label{F.dualGraphM_(3,4)}
\end{subfigure}
\begin{subfigure}[c]{0.35\textwidth}
\centering
\begin{tikzpicture}[scale=1.1]
\draw (0.,0.)-- (0.7071067811865476,0.7071067811865475);
\draw (0.,0.)-- (0.7071067811865476,-0.7071067811865475);
\draw (0.7071067811865476,0.7071067811865475)-- (1.4142135623730951,0.);
\draw (1.4142135623730951,0.)-- (0.7071067811865476,-0.7071067811865475);
\node[label={left:{\small $P_\emptyset$}}] (a) at (0.,0.) {};
\node[label={[label distance=-1mm]90:{\small $P_{\{2\}}$}}] (b) at (0.7071067811865476,0.7071067811865475) {};
\node[label={[label distance=-4mm]270:{\small $P_{\{2k-2\}}$}}] (c) at (0.7071067811865476,-0.7071067811865475) {};
\node[label={right:{\small $P_{\{2,2k-2\}}$}}] (d) at (1.4142135623730951,0.) {};
\node[label={[label distance=-7mm]90:{\small $P_{V_1 \setminus \{1,2k-1\}}$}}] (e) at (2.5,0.7071067811865475) {};
\node[label={[label distance=-1mm]270:{\small $P_{V_2}$}}] (f) at (2.5,-0.7071067811865475) {};
\end{tikzpicture}
\caption{The dual graph of $J_{M_{k-1,k}}$, $k \!\geq\! 5$} \label{F.dualGraphM_(k-1,k)}
\end{subfigure}
\caption{} \label{F.dualGraphRagni}
\end{figure}
Thus, $J_{M_{k,k}}$ and $J_{M_{k-1,k}}$ are not Cohen-Macaulay by Hartshorne's Theorem \cite{H62}. Notice that, $M_{3,4}$ is the bipartite graph with the smallest number of vertices whose binomial edge ideal is unmixed and not Cohen-Macaulay.
\end{example}

The following technical result has several crucial consequences, see Theorem \ref{T.cutSetMinusVertex} and Theorem \ref{T.moreThan2CutVertices}. We show that, under some assumption on the graph, the intersection of two cut sets, which differ by one element and have the same cardinality, is again a cut set.

\begin{lemma}\label{INTERSECTION}
Let $G$ be a graph such that $J_G$ is unmixed. Let $S,T \in \mathcal{M}(G)$ with $|S|=|T|$ and $|S \setminus T|=1$.
\begin{itemize}
\item[$\mathrm{(i)}$] If $\{P_S,P_T\} \in \mc{D}(G)$, then $S \cap T \in \mathcal{M}(G)$.
\item[$\mathrm{(ii)}$] If $S \cup T \in \mathcal{M}(G)$ and $G$ is bipartite, then $S \cap T \in \mathcal{M}(G)$.
\end{itemize}
\end{lemma}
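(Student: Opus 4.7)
I would first aim to show $c_G(S \cap T) = |S \cap T|+1$; once this holds, a short minimality argument (if $c_G(W) = |W|+1$ in an unmixed $J_G$, then iteratively discarding $v \in W$ with $c_G(W \setminus \{v\}) = c_G(W)$ yields a minimal cut set $W' \subseteq W$ with $|W'|+1 = c_G(W') = |W|+1$, forcing $W' = W$) implies $S \cap T \in \mathcal{M}(G)$. Write $U = S \cap T$, $S = U \cup \{s\}$, $T = U \cup \{t\}$, $k = c_G(U)$, and $\alpha = |U|+2-k$. By unmixedness, $c_G(S) = c_G(T) = |U|+2$, so removing $s$ (respectively $t$) from $G_{\overline U}$ changes the number of components by the same amount $\alpha \geq 1$; the goal becomes proving $\alpha = 1$.

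I would split the argument on whether $s$ and $t$ lie in the same component of $G_{\overline U}$. If they lie in distinct components, then $c_G(S \cup T) = k + 2\alpha$. In case (i), Theorem \ref{Dual graph}(c) translates $\{P_S, P_T\} \in \mathcal D(G)$ into $t$ not being a cut vertex of $G_{\overline S}$; since the component of $t$ is unchanged when passing from $G_{\overline U}$ to $G_{\overline S}$, this yields $c_G(S \cup T) \leq c_G(S) = k + \alpha$, forcing $\alpha \leq 0$, a contradiction, so this subcase is vacuous. In case (ii), the hypothesis $S \cup T \in \mathcal M(G)$ gives $c_G(S \cup T) = |U|+3 = k+\alpha+1$, which together with $k+2\alpha$ yields $\alpha = 1$ at once.

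When $s, t$ lie in the same component $C$ of $G_{\overline U}$, let $C_1', \ldots, C_{\alpha+1}'$ be the components of $C \setminus \{s\}$ (with $t \in C_1'$), $D_1', \ldots, D_{\alpha+1}'$ those of $C \setminus \{t\}$ (with $s \in D_1'$), and $\beta$ the number of components of $C \setminus \{s, t\}$; then $c_G(S \cup T) = k-1+\beta$, so $\beta = \alpha+1$ in (i) and $\beta = \alpha+2$ in (ii). The key observation is that each $C_i'$ for $i \geq 2$ is already a component of $C \setminus \{s, t\}$ and carries an $s$-neighbor (as $s$ is a cut vertex of $C$), so $D_1' \supseteq \{s\} \cup C_2' \cup \cdots \cup C_{\alpha+1}'$. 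A case split on whether $s$ has neighbors inside the connected components of $C_1' \setminus \{t\}$ (one component in (i), two in (ii)), matched against the constraint that $D_1' \setminus \{s\}$ has $\beta - \alpha$ components, forces $\alpha = 1$ in case (i) and either $\alpha = 1$ or $\alpha = 2$ in case (ii). In the latter subcase, $C$ is pinned into a rigid configuration: the edge $\{s, t\}$ is a bridge, $C$ decomposes as $\{s\} \cup C_2' \cup C_3'$ together with $\{t\} \cup F_1 \cup F_2$ (where $F_1, F_2$ are the components of $C_1' \setminus \{t\}$) with no cross-edges, and $s, t$ are each other's only neighbors across the bridge.

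Ruling out this residual configuration, i.e. $\alpha = 2$ in case (ii), is where bipartiteness of $G$ is used. Combining it with Proposition \ref{P.unmixed2leaves} to constrain the placement of leaves, and with the cut-set condition on $S \cup T$ (which forces every $v \in U$ to connect at least two components of $G_{\overline{S \cup T}}$), I would derive a parity/counting contradiction with the rigid bridge configuration of $C$. This last bipartite step is the main obstacle of the proof; everything preceding it reduces to direct component counting once the combinatorial set-up is in place.
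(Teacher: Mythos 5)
Your component-counting framework is sound and, up to relabelling, coincides with the paper's: your $C_1',\dots,C_{\alpha+1}'$ and $D_1',\dots,D_{\alpha+1}'$ are the paper's $H_1,\dots,H_i$ and $K_1,\dots,K_i$ (with $i=\alpha+1$), your observation that $D_1'\supseteq\{s\}\cup C_2'\cup\cdots\cup C_{\alpha+1}'$ is exactly the paper's $H_p\subseteq K_1$, $K_q\subseteq H_1$ step, and your count $\alpha\le\beta-\alpha$ reproduces the paper's dichotomy ($\alpha=1$ in case (i); $\alpha\le 2$ in case (ii), with the $\alpha=2$ configuration forcing $H_1\cap K_1=\emptyset$, $i=3$ and $\{s,t\}\in E(G)$ a bridge). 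Two of your shortcuts are genuine small improvements: the reduction ``$c_G(U)=|U|+1$ plus unmixedness implies $U\in\mc M(G)$'' via iterated deletion replaces the paper's ad hoc $Z=\emptyset$ argument in the $\alpha=1$ subcase, and in case (i) you observe that the distinct-components subcase is vacuous rather than re-proving the conclusion there.

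The gap is the last paragraph. Ruling out $\alpha=2$ in case (ii) is not a local parity obstruction: the rigid bridge configuration you describe (a connected bipartite $C$ with adjacent $s,t$, each a cut vertex splitting $C$ into three pieces, joined only by the bridge $\{s,t\}$) is perfectly realizable, e.g.\ by a double star, so no contradiction can be extracted from $C$ alone, and Proposition \ref{P.unmixed2leaves} does not enter the paper's argument at all. The contradiction has to come from the global graph: one must analyse how the vertices of $U$ reattach the pieces $H_2,H_3,K_2,K_3,G_2,\dots,G_r$ of $G_{\overline{S\cup T}}$. Concretely, the paper first isolates the set $Z$ of vertices of $U$ that only join some $H_i$ to some $K_j$, shows $|Z|=1$ by the same minimality-plus-unmixedness mechanism you use elsewhere, then uses bipartiteness (via $\{s,t\}\in E(G)$) to conclude that $z$ and $s$ have disjoint neighbourhoods in $H_2$, and finally builds two auxiliary cut sets $W=((S\cap T)\setminus A)\cup N_{H_2}(s)$ and $W'=(W\setminus B)\cup\{t\}$ whose component counts violate unmixedness. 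This construction is roughly a page of the proof and is not recoverable from ``a parity/counting contradiction'' as stated; as written, your argument proves the lemma only under the additional conclusion $\alpha\ne 2$, which is exactly the case that requires the bipartite hypothesis and the hard work.
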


\begin{proof}
Let $S=(T \setminus \{t\}) \cup \{s\}$ and let $G_1, \dots, G_r$ be the connected components of $G_{\overline{S\cap T}}$. Suppose first that $s \in G_i$ and $t \in G_j$ with $i \neq j$. Let $z \in S \cap T$ such that $c_G((S \cap T) \setminus \{z\})=c_G(S \cap T)$. Since $z \in S$ and $S \in \mc M(G)$, $z$ joins at least two components of $G_{\overline S}$. Then in $G_{\overline S}$ it is only adjacent to some components of $(G_i)_{\overline{\{s\}}}$. This implies that it does not join any components in $G_{\overline T}$, a contradiction, since $T \in \mc M(G)$.

Assume now that $s,t \in G_1$ and suppose first that $r=|S \cap T|+1$. We claim that $S \cap T \in \mc M(G)$. In this case, $G_{\overline S}$ has $r+1$ connected components, say $H_1, H_2, G_2, \dots, G_r$. Consider the set
\[
Z=\{z \in S \cap T : \text{ adding } z \text{ to } G_{\overline S} \text{ it connects only } H_1 \text{ and } H_2\}.
\]
We show that $X=(S \cap T) \setminus Z \in \mc M(G)$. For every $x \in X$, we know that $c_G(S \setminus \{x\})<c_G(S)$. In particular, adding $x$ to $G_{\overline S}$, it joins some connected components and at least one of them is $G_i$ with $i \geq 2$. Hence, $c_G(X \setminus \{x\})<c_G(X)$. Moreover, $c_G(X)=|S \cap T|-|Z|+1$, by the unmixedness of $J_G$. On the other hand, by definition of $Z$ and since $S \in \mc M(G)$, it follows that $c_G(X)=r=|S|=|S \cap T|+1$. Thus, $Z=\emptyset$ and $S \cap T=X \in \mc M(G)$.

Suppose now that $H_1, \dots, H_i, G_2,\dots, G_r$ are the connected components of $G_{\overline{S}}$, with $i \geq 3$, and that $t \in H_1$. In the same way let $K_1, \dots, K_i, G_2, \dots, G_r$ be the connected components of $G_{\overline{ T}}$ and let $s \in K_1$. We show that this case cannot occur.

Following the same argument of the proof of Theorem \ref{Dual graph} c), we conclude that the connected components of $G_{\overline{S \cup T}}$ are $H_2, \dots, H_i, K_2, \dots, K_i, G_2, \dots, G_r$ and the connected components of $H_1 \cap K_1$, if it is not empty.

\medskip
{\bf (i)} \  If $H_1 \cap K_1 \neq \emptyset$, it follows that $V(H_1)=V(K_2 \cup \dots \cup K_i \cup (H_1 \cap K_1) \cup \{t\})$ and $V(K_1)=V(H_2 \cup \dots \cup H_i \cup (H_1 \cap K_1) \cup \{s\})$. In this case, $t$ is a cut vertex of $G_{\overline S}$, hence $\{P_S,P_T\}$ is not an edge of $\mc D(G)$ by Theorem \ref{Dual graph} c), a contradiction.

Let now $H_1 \cap K_1 = \emptyset$, then $V(H_1)=V(K_2 \cup \dots \cup K_i \cup \{t\})$ and $V(K_1)=V(H_2 \cup \dots \cup H_i \cup \{s\})$. Since $i \geq 3$, $t$ is a cut vertex of $H_1$, hence $\{P_S,P_T\}$ is not an edge of $\mc D(G)$ by Theorem \ref{Dual graph} c), a contradiction.

\medskip
{\bf (ii)} In this case, since both $S$ and $S \cup T$ are cut sets of $G$ and $i \geq 3$, we have that $i=3$ and $H_1 \cap K_1 = \emptyset$. Therefore, the connected components of $G_{\overline{S \cup T}}$ are $H_2, H_3, K_2, K_3, G_2, \dots, G_r$.

We know that $s$ is adjacent to $x \in H_1$ and that $s \in K_1$. Hence, $s$ is not adjacent to any vertices of $K_2$ or $K_3$. Thus, $x=t$, since $V(H_1)=V(K_2 \cup K_3 \cup \{t\})$. This means that $\{s,t\} \in E(G)$. Let
\[
Z=\{z \in S \cap T : \text{ adding } z \text{ to } G_{\overline{S \cup T}} \text{ it connects only some } H_i \text{ with some } K_j\}.
\]
Notice that, there are no vertices in $S \cap T$ that only connects $H_2$ to $H_3$ or $K_2$ to $K_3$ in $G_{\overline{S \cup T}}$. In fact, if $z \in S \cap T$ only connects $H_2$ to $H_3$ in $G_{\overline{S \cup T}}$, then $c_G(T \setminus \{z\})=c_G(T)$, a contradiction, since $T \in \mc M(G)$. The same holds for $K_2$ and $K_3$.

As above, since $S \cup T \in \mc M(G)$, it follows that $(S \cap T) \setminus Z \in \mc M(G)$ and, by the unmixedness of $J_G$, $|Z|=1$, say $Z=\{z\}$. Without loss of generality, we may assume that $z$ connects at least $H_2$ and $K_2$.

Since $s$ and $t$ are adjacent in $G$, one of them is in the same bipartition set of $z$. Without loss of generality, assume that this vertex is $t$, thus $N_{H_2}(s) \cap N_{H_2}(z)=\emptyset$.
Let
\[
A=\{x \in S \cap T : \text{ if } \{x,v\} \in E(G) \text{ for some } v \in G_1, \text{ then } v \in N_{H_2}(s)\}.
\]
Notice that, $A$ contains also all vertices of $S \cap T$ that connect only some $G_j$'s in $G_{\overline{S \cap T}}$, with $j \geq 2$. We claim that
\[
W=((S \cap T) \setminus A) \cup N_{H_2}(s) \in \mc M(G).
\]
In Figure \ref{F.W} the set $W$ is colored in gray and the circles represent the connected components of $G_{\overline{S \cup T}}$, where only some vertices are drawn.
\begin{figure}[ht!]
\vspace*{-1cm}
\begin{tikzpicture}
\tikzset{>=stealth}
\draw [fill=lightgray, rotate around={-1.4840146756621018:(5.664944996658899,2.4339553967496657)}] (5.664944996658899,2.4339553967496657) ellipse (1.0612064296793244cm and 0.2732968671960563cm);
\draw [fill=lightgray, rotate around={-11.205115725904253:(-0.8593667999432268,1.9773742698475931)}] (-0.8593667999432268,1.9773742698475931) ellipse (0.7009720565180309cm and 0.26481020565641566cm);
\draw (0.,0.)-- (1.4962128896038784,0.);
\draw(-0.7978172765551623,2.376749279956747) circle (1.069377628817238cm);
\draw (0.,0.)-- (-1.3058422083864825,2.0742360800495936);
\draw (0.,0.)-- (-0.8593281658697598,1.9674551503263051);
\draw (0.,0.)-- (-0.3725195491211828,1.8999003128338015);
\draw(2.294030166159041,2.376749279956747) circle (1.069377628817238cm);
\draw (1.4962128896038784,0.)-- (2.3555410554736387,1.967455150326305);
\draw (0.877598013336272,3.6010912225697145)-- (-1.1715637643001746,3.0340275859910766);
\draw (0.877598013336272,3.6010912225697145)-- (2.527237683383223,3.0211397760688348);
\draw (0.877598013336272,3.6010912225697145)-- (1.857071567426649,2.6602810982460654);
\draw (1.4962128896038784,0.)-- (1.8687324387250615,1.8999003128338012);
\draw(5.504321775421079,0.) circle (1.cm);
\draw(8.50718148730342,0.) circle (1.cm);
\draw (-0.3725195491211828,1.8999003128338015)-- (3.1200769398063457,0.7657730396765263);
\draw (3.1200769398063457,0.7657730396765263)-- (5.401219296043145,-0.5230079525476501);
\draw (1.4962128896038784,0.)-- (5.6412161628321655,-1.6627341461853364);
\draw (5.6412161628321655,-1.6627341461853364)-- (8.404079007925485,-0.6905494815367931);
\draw (6.50957094935594,2.4025248998012305)-- (8.043220330102715,0.4435777916204822);
\draw (8.133434999558407,2.4025248998012305)-- (5.968282932621784,0.508016841231691);
\draw (8.133434999558407,2.4025248998012305)-- (8.996918264348608,0.4306899816982404);
\draw [->,shift={(-1.2347474923589876,1.2123722920529842)}] plot[domain=1.8076664818310126:3.0243621039394015,variable=\t]({1.*0.864046465205049*cos(\t r)+0.*0.864046465205049*sin(\t r)},{0.*0.864046465205049*cos(\t r)+1.*0.864046465205049*sin(\t r)});
\draw(-0.42703443914039924,-1.4060630270366337) circle (0.8726592982516279cm);
\draw(1.9232473287442775,-1.4060630270366339) circle (0.8726592982516279cm);
\draw (0.,0.)-- (-0.6394835912170972,-1.544154975886487);
\draw (0.,0.)-- (-0.012758592590838252,-1.3741956542251292);
\draw (6.50957094935594,2.4025248998012305)-- (2.324182080252839,-1.0661443837139186);
\draw (4.8857068991534724,2.4)-- (5.139133345269087,0.49535688404980466);
\draw (4.8857068991534724,2.4)-- (2.8871723332560886,1.971878490982849);
\draw (1.4962128896038784,0.)-- (1.5089714821947164,-1.3741956542251292);
\draw (1.4962128896038784,0.)-- (2.1356964808209753,-1.5441549758864872);
\draw (0.877598013336272,3.6010912225697145)-- (-0.7988204552746206,2.757940353666628);
\draw (-0.7988204552746206,2.757940353666628)-- (-1.3058422083864825,2.0742360800495936);
\draw [->] (5.7,2.6)-- (5.7,3.1);
\node[label={300:{\footnotesize $s$}}] (a) at (0.,0.) {};
\node[label={240:{\footnotesize $t$}}] (b) at (1.5,0.) {};
\node (c) at (-1.3058422083864825,2.0742360800495936) {};
\node (d) at (-0.8593281658697598,1.9674551503263051) {};
\node (e) at (-0.3725195491211828,1.8999003128338015) {};
\node (f) at (2.3555410554736387,1.967455150326305) {};
\node (g) at (-1.1715637643001746,3.0340275859910766) {};
\node (h) at (2.527237683383223,3.0211397760688348) {};
\node (i) at (1.857071567426649,2.6602810982460654) {};
\node (j) at (1.8687324387250615,1.8999003128338012) {};
\node[label={[label distance=-1.5mm]20:{\footnotesize $x_1 \!\in\! A$}}] (k) at (3.1200769398063457,0.7657730396765263) {};
\node (l) at (5.401219296043145,-0.5230079525476501) {};
\node (m) at (8.404079007925485,-0.6905494815367931) {};
\node[label={0:{\footnotesize $w_1$}}] (n) at (4.8857068991534724,2.4) {};
\node[label={180:{\footnotesize $w_2$}}] (o) at (6.50957094935594,2.4) {};
\node[label={[label distance=-1.5mm]75:{\footnotesize $x_2 \!\in\! A$}}] (p) at (8.133434999558407,2.4) {};
\node (q) at (8.043220330102715,0.4435777916204822) {};
\node (r) at (5.968282932621784,0.508016841231691) {};
\node (s) at (8.996918264348608,0.4306899816982404) {};
\node (t) at (-0.6394835912170972,-1.544154975886487) {};
\node (u) at (-0.012758592590838252,-1.3741956542251292) {};
\node (v) at (2.324182080252839,-1.0661443837139186) {};
\node (w) at (5.139133345269087,0.49535688404980466) {};
\node (x) at (2.8871723332560886,1.971878490982849) {};
\node (y) at (1.5089714821947164,-1.3741956542251292) {};
\node (z) at (2.1356964808209753,-1.5441549758864872) {};
\node (a1) at (-0.7988204552746206,2.757940353666628) {};
\node[label={[label distance=-0.5mm]70:{\footnotesize $z \!\in\! Z$}}] (b1) at (0.877598013336272,3.6010912225697145) {};
\node[label={[label distance=-1mm]300:{\footnotesize $y \!\in\! B$}}] (c1) at (5.6412161628321655,-1.6627341461853364) {};
\draw[draw,fill = lightgray] (0.877598013336272,3.6010912225697145) circle (1mm);
\draw[draw,fill = lightgray] (5.6412161628321655,-1.6627341461853364) circle (1mm);
{\tikzstyle{every node}=[fill=white,draw=white,circle,inner sep=1pt,minimum width=0.2pt]
\node[label={\footnotesize $H_2$}] at (-1.9,2.8) {};
\node[label={\footnotesize $K_2$}] at (2.8,3.2) {};
\node[label={\footnotesize $H_3$}] at (-1.5,-2.3) {};
\node[label={\footnotesize $K_3$}] at (3,-2.3) {};
\node[label={\footnotesize $G_2$}] at (6.6,-1.2) {};
\node[label={\footnotesize $G_r$}] at (9.6,-1.2) {};
\node[label={\footnotesize $\cdots$}] at (7,-0.5) {};
\node[label={[label distance=-1.5cm]90:{\footnotesize $(S \cap T) \!\setminus\! (A \cup B \cup \{z\})$}}] at (5.5,3) {};
\node[label={[label distance=-1.4cm]270:{\footnotesize $N_{H_2}(s)$}}] at (-2.2,0.4) {};
}
\end{tikzpicture}
\caption{The set $W$ in gray} \label{F.W}
\end{figure}
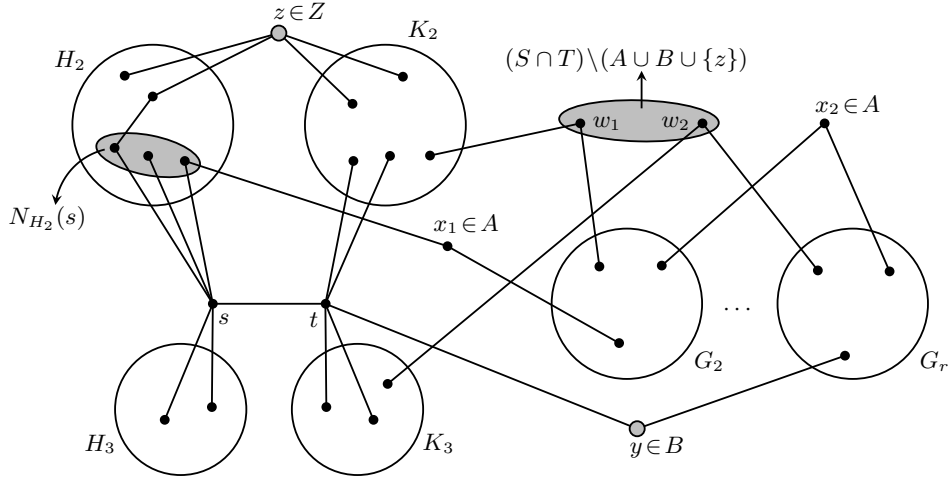

Notice that $z \in W$. Let $w \in W$. Adding $w=z$ to $G_{\overline W}$, we connect a vertex of $H_2 \setminus N_{H_2}(s)$ with $K_2$ whereas, adding $w \in N_{H_2}(s)$ to $G_{\overline W}$, we connect $s$ to $H_2 \setminus N_{H_2}(s)$. Moreover, if $w \in (S \cap T) \setminus (A \cup \{z\})$, we know that, in $G_{\overline{S \cap T}}$, $w$ connects $G_i$ for some $i \geq 2$ to a vertex $v$ of $G_1 \setminus N_{H_2}(s)$. By construction, in $G_{\overline{W}}$ the connected components containing $v$ and $G_i$ are different and $w$ still connects them. This proves that $W \in \mc M(G)$.

Since $J_G$ is unmixed, we have that $c_G(W)=|W|+1$ and a connected component of $G_{\overline W}$ is the subgraph induced on $H_3 \cup K_2 \cup K_3 \cup \{s,t\}$. Thus, removing $t$ from $G_{\overline W}$, this component splits in three components, $H_3 \cup \{s\}$, $K_2$, $K_3$. Therefore, if $W \cup \{t\}$ is a cut set of $G$, we get $c_G(W \cup \{t\}) = c_G(W) + 2= |W|+3$, which contradicts the unmixedness of $J_G$.

Hence, we may assume that $W \cup \{t\} \notin \mc M(G)$. Thus there exists $y \in N_G(t)$ that joins $t$ with only one connected component of $G_{\overline{W}}$ (i.e., $c_G((W \cup \{t\}) \setminus \{y\})=c_G(W \cup \{t\})$). In this case, we define
\[
B=\{y \in S \cap T : \{y,t\} \in E(G) \text{ and } N_G(y)\setminus \{t\} \text{ is contained in one connected component of } G_{\overline{W}}\},
\]
where $|B| \geq 1$, since $W \cup \{t\} \notin \mc M(G)$. We claim that
\[
W'=(W \setminus B) \cup \{t\} \in \mc M(G).
\]
Notice that $z \in W'$. The proof is similar to the case of $W$. We only notice that, adding $t$ to $G_{\overline{W'}}$, we connect at least $K_2$, $K_3$ and the connected component containing $s$. Moreover, each element in $B$ does not connect different connected components of $G_{\overline W}$ and any two elements of $B$ are not adjacent (since they are adjacent to $t$ and $G$ is bipartite).
Thus, $|W'|<|W \cup \{t\}|$ and
\[
c_G(W')=c_G(W \cup \{t\})=c_G(W)+2=|W|+3=|W \cup \{t\}|+2>|W'|+1,
\]
which contradicts the unmixedness of $J_G$.
\end{proof}

\begin{remark}
It could be true that, if $G$ is bipartite and $J_G$ is unmixed, then $S \cap T \in \mathcal{M}(G)$ for every $S,T \in \mathcal{M}(G)$. Both assumptions are needed: in fact, if $G$ is the graph in Figure \ref{F.CMIntersectionNotCutSet}, one can check with \texttt{Macaulay2} \cite{Mac2} that $J_G$ is Cohen-Macaulay and thus $\mathcal D(G)$ is connected. Nevertheless, $\{2,4\}, \{4,5\} \in \mc M(G)$ and $\{2,4\} \cap \{4,5\} = \{4\} \notin \mathcal M(G)$.

On the other hand, if $G$ is the cycle of length $6$ with consecutive labelled vertices, then $J_G$ is not unmixed, $\{1,3\}, \{1,5\} \in \mc M(G)$ and $\{1,3\} \cap \{1,5\} = \{1\} \notin \mathcal M(G)$.

\begin{figure}[ht!]
\begin{subfigure}[c]{0.4\textwidth}
\centering
\begin{tikzpicture}
\draw (0.,0.)-- (2.,0.);
\draw (2.,0.)-- (2.,2.);
\draw (2.,2.)-- (0.,2.);
\draw (0.,2.)-- (0.,0.);
\draw (2.,0.)-- (4.,0.);
\draw (2.,2.)-- (4.,2.);
\draw (0.,0.)-- (1.,1.);
\draw (1.,1.)-- (2.,0.);
\node[label={below:$4$}] (a) at (0.,0.) {};
\node[label={below:$5$}] (b) at (2.,0.) {};
\node[label={above:$2$}] (c) at (2.,2.) {};
\node[label={above:$3$}] (d) at (0.,2.) {};
\node[label={below:$6$}] (e) at (4.,0.) {};
\node[label={above:$1$}] (f) at (4.,2.) {};
\node[label={above:$7$}] (g) at (1.,1.) {};
\end{tikzpicture}
\caption{The graph $G$}
\end{subfigure}
\begin{subfigure}[c]{0.5\textwidth}
\centering
\begin{tikzpicture}
\draw (0.,0.)-- (1.0606601717798214,1.0606601717798212);
\draw (0.,0.)-- (1.0606601717798214,-1.0606601717798212);
\draw (1.0606601717798214,1.0606601717798212)-- (2.1213203435596424,0.);
\draw (2.1213203435596424,0.)-- (1.0606601717798214,-1.0606601717798212);
\draw (1.0606601717798214,1.0606601717798212)-- (2.560660171779821,1.0606601717798212);
\draw (2.1213203435596424,0.)-- (3.621320343559642,0.);
\draw (3.621320343559642,0.)-- (2.560660171779821,1.0606601717798212);
\draw (2.560660171779821,-1.0606601717798212)-- (1.0606601717798214,-1.0606601717798212);
\draw (2.560660171779821,-1.0606601717798212)-- (3.621320343559642,0.);
\node[label={left:$P_\emptyset$}] (a) at (0.,0.) {};
\node[label={[label distance=-1mm]90:$P_{\{2\}}$}] (b) at (1.0606601717798214,1.0606601717798212) {};
\node[label={[label distance=-1mm]270:$P_{\{5\}}$}] (c) at (1.0606601717798214,-1.0606601717798212) {};
\node[label={left:$P_{\{2,5\}}$}] (d) at (2.1213203435596424,0.) {};
\node[label={[label distance=-2mm]90:$P_{\{2,4\}}$}] (e) at (2.560660171779821,1.0606601717798212) {};
\node[label={[label distance=-2mm]270:$P_{\{4,5\}}$}] (f) at (2.560660171779821,-1.0606601717798212) {};
\node[label={right:$P_{\{2,4,5\}}$}] (g) at (3.621320343559642,0.) {};
\end{tikzpicture}
\caption{The dual graph of $G$}
\end{subfigure}
\caption{} \label{F.CMIntersectionNotCutSet}
\end{figure}
\end{remark}

The next result is important for Theorem \ref{T.CHARACTERIZATION}, since at the same time provides the equivalence b) $\Leftrightarrow$ d) and has important consequences for the proof of b) $\Rightarrow$ c).

\begin{theorem} \label{T.cutSetMinusVertex}
Let $G$ be a bipartite graph. If $\mc D(G)$ is connected, then for every non-empty $S \in \mc M(G)$, there exists $s \in S$ such that $S \setminus \{s\} \in \mc M(G)$.
\end{theorem}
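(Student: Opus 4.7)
The plan is induction on $|S|$. For $|S|=1$, $S\setminus\{s\}=\emptyset\in\mc M(G)$. For $|S|\ge 2$ I would argue by contradiction, choosing $S$ of minimum cardinality among non-empty cut sets such that $S\setminus\{s\}\notin\mc M(G)$ for every $s\in S$.

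The first step is to analyze the neighbors of $P_S$ in $\mc D(G)$ via Theorem \ref{Dual graph}. Any edge $\{P_S,P_T\}$ falls into three cases: $T=S\setminus\{s\}$, excluded by hypothesis; $|T|=|S|$ with $|S\triangle T|=2$, in which case Lemma \ref{INTERSECTION}(i) yields $S\cap T=S\setminus\{s\}\in\mc M(G)$, again excluded; or $T=S\cup\{v\}$ with $v\in\mc N(S):=\{v\notin S:S\cup\{v\}\in\mc M(G)\}$. Since $\mc D(G)$ is connected and $P_S\neq P_\emptyset$, $\mc N(S)$ is non-empty.

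Next, I would apply Lemma \ref{INTERSECTION}(ii) to pin down a rigidity statement: for every $v\in\mc N(S)$, the cut set $S\cup\{v\}$ has $v$ as its unique removable element. Indeed, if $(S\setminus\{s\})\cup\{v\}\in\mc M(G)$ for some $s\in S$, then this set and $S$ have the same cardinality, differ by the single element $s$, and their union is $S\cup\{v\}\in\mc M(G)$, so Lemma \ref{INTERSECTION}(ii) forces $S\setminus\{s\}\in\mc M(G)$, contradicting our choice of $S$.

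The contradiction is then obtained by travelling along a shortest path $S=S_0,S_1,\dots,S_r=\emptyset$ in $\mc D(G)$ and propagating the invariant ``no element of $S$ is removable from $S_i$'' for all indices $i$ with $S\subseteq S_i$. The up-step case $S_{i+1}=S_i\cup\{w\}$ is handled exactly as the rigidity argument above (pairing $S_i$ with $(S_i\setminus\{u\})\cup\{w\}$ via Lemma \ref{INTERSECTION}(ii)); intermediate sideways and down steps $S_{i+1}=(S_i\setminus\{t\})\cup\{t'\}$ or $S_{i+1}=S_i\setminus\{t\}$ are controlled by combining Lemma \ref{INTERSECTION}(i) with Lemma \ref{INTERSECTION}(ii) at matching cardinalities. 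Since $S_r=\emptyset$ does not contain $S$, there is a smallest $k$ with $S\not\subseteq S_k$; the step $S_{k-1}\to S_k$ is then forced (by Theorem \ref{Dual graph} and $S\subseteq S_{k-1}$) to remove an element $t\in S$, and Lemma \ref{INTERSECTION}(i) (in the sideways case) or direct inspection (in the down case) gives $S_{k-1}\setminus\{t\}\in\mc M(G)$, exhibiting a removable element of $S_{k-1}$ inside $S$ and contradicting the invariant.

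The main obstacle I expect is justifying the propagation of the invariant through intermediate sideways and down steps: a direct application of Lemma \ref{INTERSECTION}(ii) does not immediately match cardinalities, and one has to carefully pair cut sets — possibly bridging through $S_i\setminus\{t\}$, which is forced into $\mc M(G)$ by Lemma \ref{INTERSECTION}(i) — before the bipartite rigidity of Lemma \ref{INTERSECTION}(ii) can be invoked. This delicate pairing is where bipartiteness of $G$ is genuinely used, and it is the technical heart of the argument.
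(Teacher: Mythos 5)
Your opening moves are exactly the paper's: the analysis showing that every neighbour of $P_S$ in $\mc D(G)$ must be of the form $P_{S\cup\{v\}}$ (ruling out $S\setminus\{s\}$ by hypothesis and $(S\setminus\{s\})\cup\{v\}$ by Lemma \ref{INTERSECTION}(i)), and the rigidity of $S\cup\{v\}$ via Lemma \ref{INTERSECTION}(ii), are precisely the base case and the up-step of the paper's induction. The problem is the part you yourself flag as the ``technical heart'': propagating the invariant ``no element of $S$ is removable from $S_i$'' through a sideways or down step that removes an element $t\notin S$. This propagation fails with the tools available. For a sideways step $S_{i+1}=(S_i\setminus\{t\})\cup\{t'\}$ with $t\notin S$, suppose $S_{i+1}\setminus\{s\}\in\mc M(G)$ for some $s\in S$; Lemma \ref{INTERSECTION}(i) gives $S_i\setminus\{t\}\in\mc M(G)$, and pairing $S_i\setminus\{t\}$ with $S_{i+1}\setminus\{s\}$ in Lemma \ref{INTERSECTION}(ii) (same cardinality, union $S_{i+1}$) yields only $S_i\setminus\{t,s\}\in\mc M(G)$ --- which is perfectly compatible with your invariant at $S_i$. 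To contradict the invariant you would need $S_i\setminus\{s\}\in\mc M(G)$, i.e.\ you would have to re-adjoin $t$ to $S_i\setminus\{t,s\}$, and neither Theorem \ref{Dual graph} nor Lemma \ref{INTERSECTION} ever produces a \emph{larger} cut set from smaller ones. The down step $S_{i+1}=S_i\setminus\{t\}$, $t\notin S$, has the same defect. So the local invariant, as stated, cannot be pushed past the first non-up step.

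The paper's proof closes this gap by making the induction global rather than local: it shows that along a \emph{shortest} path starting at the bad cut set $T$, the cardinality is forced to increase at every step, so that all intermediate sets are nested, $S_i=T\cup\{s_1,\dots,s_i\}$. When the first non-up step to some $W$ occurs, this nesting allows a cascade of applications of Lemma \ref{INTERSECTION}(ii) all the way back down the chain $S_k, S_{k-1},\dots,S_1,T$ (each consecutive union being the next set up, hence a cut set), which terminates either in $T\setminus\{s\}\in\mc M(G)$ (contradicting the choice of $T$) or in a strictly shorter path (contradicting minimality). It is this cascade to the starting point --- not a one-step pairing at position $i$ --- that makes the argument work; since the cardinalities then increase forever, $P_\emptyset$ is unreachable from $P_T$ and $\mc D(G)$ is disconnected. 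Your skeleton can be completed, but only by replacing your invariant with the stronger statement that the shortest path is strictly increasing and supplying this backward cascade; as written, the proposal is missing that idea.
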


\begin{proof}
By contradiction, let $T \in \mc M(G)$ such that $T \setminus \{t\} \notin \mc M(G)$ for every $t \in T$. Notice that $|T| \geq 2$, otherwise $T \setminus \{t\}=\emptyset \in \mc M(G)$.

Let $W \in \mathcal{M}(G)$, $W \neq T$, such that there exists a path $\mathcal{P}:P_T=P_{S_0},P_{S_1},\dots,P_{S_k},P_{S_{k+1}}=P_W$ in $\mc D(G)$. Assume $\mc P$ is a shortest path from $P_T$ to $P_W$.

{\bf Claim:} For $1 \leq i \leq k+1$, $|S_i|>|S_{i-1}|$. In particular, $|W|>|T|$.

We proceed by induction on $k \geq 0$.

Let $k=0$. First notice that $|W| \geq |T|$, otherwise by Theorem \ref{Dual graph} a), $W=T \setminus \{t\} \in \mc M(G)$ for some $t \in T$, a contradiction. If $|W|=|T|$, since $\{P_T,P_W\}$ is an edge of $\mc D(G)$, by Theorem \ref{Dual graph} a), we have that $W=(T \setminus \{t\}) \cup \{w\}$, for some $t \in T$ and $w \notin T$. By Lemma \ref{INTERSECTION}, we have that $W \cap T=T \setminus \{t\} \in \mc M(G)$, a contradiction. Then $|W|>|T|$.

Let $k \geq 1$. By induction, $|S_i|>|S_{i-1}|$, for every $1 \leq i \leq k$. In particular, by Theorem \ref{Dual graph} b), $S_i=T \cup \{s_1,\dots,s_i\}$ for $i=1,\dots,k$ and $s_j \notin T$, for $j=1,\dots,k$. Set $S=S_k$.

If $|W|<|S|$, then $|W|=|S|-1$ by Theorem \ref{Dual graph} a). Hence $W=S \setminus \{s\}$ for some $s \in S$.

First suppose that $s \in T$. Thus, $W=(T \setminus \{s\}) \cup \{s_1,\dots,s_k\}$. Since $|W|=|S_{k-1}|$, $|W \setminus S_{k-1}|=1$ and $W \cup S_{k-1}=S \in \mc M(G)$, by Lemma \ref{INTERSECTION} (ii) it follows that $S_{k-1} \cap W=(T \setminus \{s\}) \cup \{s_1,\dots,s_{k-1}\} \in \mc M(G)$. For every $i=1,\dots,k-2$, let $T_i=S_{i+1} \cap \cdots \cap S_{k-1} \cap W$. By induction on $i \leq k-2$, assume that $T_i \in \mc M(G)$, then $T_{i-1}=S_i \cap T_i=(T \setminus \{s\}) \cup \{s_1,\dots,s_i\} \in \mc M(G)$ by Lemma \ref{INTERSECTION} (ii), since $|S_i|=|T_i|$, $|T_i \setminus S_i|=1$ and $S_i \cup T_i=S_{i+1} \in \mc M(G)$. In particular, $T_0=S_1 \cap \cdots \cap S_{k-1} \cap W = (T \setminus \{s\}) \cup \{s_1\} \in \mc M(G)$, $|T_0|=|T|$, $|T_0 \setminus T|=1$ and $T_0 \cup T=S_1 \in \mc M(G)$. Again, by Lemma \ref{INTERSECTION} (ii), $T \cap T_0 = T \setminus \{s\} \in \mc M(G)$, a contradiction.

Now assume that $s \in S \setminus T$, where $s=s_j$ for some $j \in \{1,\dots,k\}$. Since $|W|=|S_{k-1}|$ and $|W \setminus S_{k-1}|=1$, by Lemma \ref{INTERSECTION} (ii), $S_{k-1} \cap W=S_{k-1} \setminus \{s_j\} \in \mc M(G)$. For every $i=j,\dots,k-2$, let $T_i=S_{i+1} \cap \cdots \cap S_{k-1} \cap W$. By induction on $i \leq k-2$, assume that $T_i \in \mc M(G)$, then $T_{i-1}=S_i \cap T_i=S_i \setminus \{s_j\} \in \mc M(G)$ by Lemma \ref{INTERSECTION} (ii), since $|S_i|=|T_i|$, $|T_i \setminus S_i|=1$ and $S_i \cup T_i=S_{i+1} \in \mc M(G)$. In particular, $T_{j-1}=S_j \cap \cdots \cap S_{k-1} \cap W=S_j \setminus \{s_j\}=S_{j-1} \in \mc M(G)$. Therefore,
\[
\mc P':P_{S_0}=P_T,P_{S_1},\dots,P_{S_{j-1}}=P_{T_{j-1}},P_{T_j},\dots,P_{T_{k-2}},P_W
\]
is a path from $P_T$ to $P_W$, shorter than $\mc P$, a contradiction.

If $|W|=|S|$, then $W=(S \setminus \{x\}) \cup \{y\}$ for some $x,y$. If $x \in T$, then $W=(T \setminus \{x\}) \cup \{s_1,\dots,s_k,y\}$. By Lemma \ref{INTERSECTION} (i), $W \cap S=(T \setminus \{x\}) \cup \{s_1,\dots,s_k\} \in \mc M(G)$. We may proceed in a similar way to the case $|W|<|S|$, setting $T_i=S_{i+1} \cap \cdots \cap S_k \cap W$ for $i=1,\dots,k-1$.

Now assume $x \in S \setminus T$, where $x=s_j$ for some $j \in \{1,\dots,k\}$. Since $|W|=|S|$, by Lemma \ref{INTERSECTION} (i), $S \cap W=S \setminus \{x\} \in \mc M(G)$. Again, we may proceed as in the case $|W|<|S|$, setting $T_i=S_{i+1} \cap \cdots \cap S_k \cap W$ for $i=j,\dots,k-1$.

In both cases we find a contradiction. In conclusion, we proved that, if there exists a path from $P_T$ to $P_W$ in $\mc D(G)$, then $|W|>|T| \geq 2$. Thus, there is no path from $P_T$ to $P_{\emptyset}$ in $\mc D(G)$, hence $\mc D(G)$ is disconnected.
\end{proof}

Using the following result, we may reduce to consider bipartite graphs $G$ with exactly two cut vertices and $\mathcal{D}(G)$ connected.

\begin{theorem}\label{T.moreThan2CutVertices}
Let $G$ be a bipartite graph with at least three cut vertices and such that $J_G$ is unmixed.
\begin{itemize}
\item[\rm a)] There exist $G_1$ and $G_2$ such that $G=G_1 \ast G_2$ or $G=G_1 \circ G_2$.
\item[\rm b)] If $\mathcal{D}(G)$ is connected, then $\mc D(G_1)$ and $\mc D(G_2)$ are connected.
\end{itemize}
\end{theorem}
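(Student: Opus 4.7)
The plan is to locate a cut vertex of $G$ that realises a decomposition $G = G_1 \ast G_2$ or $G = G_1 \circ G_2$ in part a), and then, in part b), to transfer connectedness from $\mathcal{D}(G)$ to $\mathcal{D}(G_i)$ via a projection map between the sets of cut sets.

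For part a), I first exploit the unmixedness of $J_G$, which by \eqref{Eq.unmixedness} gives $c_G(\{c\}) = 2$ for every cut vertex $c$, so each cut vertex belongs to exactly two blocks. The key structural step is that no end block of $G$ can be $2$-connected with $\geq 3$ vertices: if $B$ were such an end block attached at $c$, then the bipartition class of $B$ not containing $c$ provides a cut set $S$ with $c_G(S) \neq |S|+1$, contradicting \eqref{Eq.unmixedness}. Combined with Proposition \ref{P.unmixed2leaves}, the block-cut tree of $G$ is thus a path $B_1, c_1, B_2, c_2, \ldots, c_{k-1}, B_k$ whose end blocks are the leaf edges $\{g_1 v_1\}$ and $\{g_2 v_2\}$, and by the hypothesis $k \geq 4$. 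For any interior cut vertex $c_i$, the two leaves lie on opposite sides of $G \setminus \{c_i\}$, call these sides $A_1, A_2$. The two blocks $B_i, B_{i+1}$ incident to $c_i$ fall into one of three cases: (i) both are single edges, so $\deg_G(c_i) = 2$ and $G = G_1 \ast G_2$ with $G_j = G_{V(A_j) \cup \{c_i\}}$; (ii) both are $2$-connected of size $\geq 3$, so $\deg_G(c_i) \geq 4$ with at least two neighbours of $c_i$ in each side, and $G = G_1 \circ G_2$ after adjoining a new leaf to $c_i$ in each $G_j$; (iii) the ``mixed'' case, one incident block being an edge and the other larger. The goal is to show the ``all mixed'' configuration cannot occur: combining a cut vertex $c_i$ with a suitable non-cut-vertex $w$ inside an adjacent larger block produces a cut set of $G$ whose number of components exceeds $|S|+1$, contradicting \eqref{Eq.unmixedness}. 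Hence some $c_i$ realises case (i) or (ii).

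For part b), parameterise $\mathcal{M}(G)$ via Theorem \ref{T.type2aCM}(a) in the $\ast$ case or Theorem \ref{T.type2bBipartiteUnmixed} in the $\circ$ case, and let $v$ denote the identified vertex. Define
\[
\pi_i \colon \mathcal{M}(G) \longrightarrow \mathcal{M}(G_i), \qquad \pi_i(U) = \begin{cases} (U \cap V(G_i)) \setminus \{v\} & \text{in the $\ast$ case}, \\ U \cap V(G_i) & \text{in the $\circ$ case}. \end{cases}
\]
In both cases $\pi_i(U) \in \mathcal{M}(G_i)$ by the parameterisation (noting that $v$ is a leaf of $G_i$ in the $\ast$ case, hence excluded from cut sets by Remark \ref{R.leafNotCutSet}), and $\pi_i$ is surjective: an arbitrary $S_1 \in \mathcal{M}(G_1)$ with $v \notin S_1$ is hit by $S_1 \cup \emptyset$, while in the $\circ$ case an $S_1 \ni v$ is hit by $S_1 \cup \{v\}$, using that $\{v\} \in \mathcal{M}(G_{3-i})$. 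By Theorem \ref{Dual graph}, every edge $\{P_U, P_V\}$ of $\mathcal{D}(G)$ satisfies $|U \triangle V| \leq 2$; splitting this symmetric difference among $V(G_1) \setminus \{v\}$, $V(G_2) \setminus \{v\}$ and $\{v\}$, and comparing heights via the explicit descriptions of $P_U(G)$ and $P_{\pi_i(U)}(G_i)$, one verifies that $\pi_i$ sends each edge either to an edge of $\mathcal{D}(G_i)$ or to a loop $\pi_i(U) = \pi_i(V)$. Given $S, T \in \mathcal{M}(G_i)$, lift them to $\widetilde{S} = S \cup S^\ast$, $\widetilde{T} = T \cup S^\ast$ for any fixed $S^\ast \in \mathcal{M}(G_{3-i})$, take a path $P_{\widetilde{S}} = P_{U_0}, \ldots, P_{U_r} = P_{\widetilde{T}}$ in the connected $\mathcal{D}(G)$, and collapse its $\pi_i$-image into a path from $P_S$ to $P_T$ in $\mathcal{D}(G_i)$.

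The main obstacle is ruling out case (iii) globally in part a), i.e.\ excluding the ``all mixed'' configuration, which requires a careful combinatorial argument weaving together bipartiteness, the block structure of $G$ and \eqref{Eq.unmixedness}; the height-tracking in the projection step of part b) is largely routine bookkeeping once the correct projection is in place.
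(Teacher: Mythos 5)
Your part a) follows the same overall strategy as the paper (classify an interior cut vertex $v$ by whether it is a leaf of the induced graph on each side: both sides give $\ast$, neither side gives $\circ$, and the mixed case must be excluded), but the crucial step is missing. Excluding the mixed case is precisely the content of the paper's proof, and you explicitly defer it (``the main obstacle \dots requires a careful combinatorial argument''). The one-line sketch you give --- ``combining a cut vertex $c_i$ with a suitable non-cut-vertex $w$ inside an adjacent larger block produces a cut set \dots'' --- is not a proof and does not match any argument I can reconstruct. The paper's actual contradiction is global, not local: writing $V_1\sqcup V_2$ and $W_1\sqcup W_2$ for the bipartitions of the two sides $G_{V(H_1)\cup\{v\}}$ and $G_{V(H_2)}$ with $v\in V_1$, $w\in W_1$, it takes $S=V_1\setminus\{\text{leaves}\}$ and $T=W_1\setminus\{\text{leaves}\}$, shows each is a cut set with $c_G(S)=|S|+1$ and $c_G(T)=|T|+1$ forcing a very specific component structure, and then shows $S\cup T$ is a cut set with $c_G(S\cup T)=|S|+|T|=|S\cup T|$, violating \eqref{Eq.unmixedness}. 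Nothing in your proposal supplies this (or an equivalent) computation. A second, smaller problem: your claim that no end block $B$ of the block tree is $2$-connected is justified by saying the bipartition class $B_2$ of $B$ not containing the attaching cut vertex $c$ violates \eqref{Eq.unmixedness}; but $G_{\overline{B_2}}$ consists of the $|B_1|-1$ isolated vertices of $B_1\setminus\{c\}$ together with one component containing $c$, so $c_G(B_2)=|B_1|$, which is perfectly consistent with unmixedness when $|B_1|=|B_2|+1$. So the block-tree-is-a-path framing is itself not established by your argument (though it is also not needed: the paper works directly with a cut vertex not adjacent to a leaf).

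For part b) you take a genuinely different route from the paper: you project $\mathcal M(G)\to\mathcal M(G_i)$ and collapse paths, whereas the paper applies Theorem \ref{T.cutSetMinusVertex} to any $S_0\in\mathcal M(G_1)\subseteq\mathcal M(G)$ to produce a descending chain $S_0\supset S_1\supset\cdots\supset\emptyset$ of cut sets, which by Theorem \ref{Dual graph} b) is already a path to $P_\emptyset$ inside $\mathcal D(G_1)$. Your projection idea is plausible and the well-definedness and surjectivity of $\pi_i$ do follow from Theorems \ref{T.type2aCM} and \ref{T.type2bBipartiteUnmixed}, but the claim that every edge of $\mathcal D(G)$ projects to an edge or a loop is not ``routine bookkeeping'': for a swap edge $T=(S\setminus\{s\})\cup\{t\}$ with $s,t\in V(G_1)$ you must transfer the condition ``$t$ is not a cut vertex of $G_{\overline S}$'' from $G$ to $(G_1)_{\overline{\pi_1(S)}}$, which requires an argument about how the component of $t$ changes when the $G_2$-part is glued on at $v$. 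That can be done, but as written it is asserted, not proved, and the paper's chain argument avoids it entirely. In summary: part b) is a salvageable alternative with gaps in the verification, but part a) is missing its central argument.
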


\begin{proof}
a) By Proposition \ref{P.unmixed2leaves}, $G$ has exactly two leaves. Let $v$ be a cut vertex that is not a neighbour of a leaf and let $H_1$ and $H_2$ be the connected components of $G_{\overline{\{v\}}}$.
If $v$ is a leaf of both $G_{V(H_1) \cup \{v\}}$ and $G_{V(H_2) \cup \{v\}}$, then $G=G_{V(H_1) \cup \{v\}} \ast G_{V(H_2) \cup \{v\}}$.

Assume that $v$ is not a leaf of $G_{V(H_1) \cup \{v\}}$ and of $G_{V(H_2) \cup \{v\}}$. Then, given two new vertices $w_1$ and $w_2$, for $i=1,2$ we set $G_i$ to be the graph $(G_{V(H_i) \cup \{v\}}) \cup \{v,w_i\}$. It follows that $G=G_1 \circ G_2$.

Now assume by contradiction that $v$ is a leaf of $G_{V(H_2) \cup \{v\}}$, but not of $G_{V(H_1)  \cup \{v\}}$, and let $w$ be the only neighbour of $v$ in $G_{V(H_2)  \cup \{v\}}$. Hence, $w$ is a cut vertex of $G$ and we may assume that it is not a leaf of $G_{V(H_2)}$, otherwise $G= G_{V(H_1) \cup \{v,w\}} \ast G_{V(H_2)}$.

The graphs $G_{V(H_1)\cup \{v\}}$ and $G_{V(H_2)}$ are bipartite with bipartitions $V_1 \sqcup V_2$ and $W_1 \sqcup W_2$, respectively. Without loss of generality, assume that $v \in V_1$ and $w \in W_1$ and let $S=V_1 \setminus \{\ell : \ell \text{ is a leaf of } G\}$. This is a cut set of $G$: indeed in $G_{\overline S}$ all vertices of $V_2$ are either isolated or connected with only one leaf of $G$, hence every element of $S$ connects at least one vertex of $V_2$ with some other connected component. Therefore, since $J_G$ is unmixed, $G_{\overline S}$ has $|S|+1$ connected components, $G_{V(H_2)}$ is one of them and the vertices of $G_{V(H_1)}$ not in $S$ form the remaining $|S|$ connected components. In the same way, the set $T=W_1 \setminus \{\ell : \ell \text{ is a leaf of } G\} \in \mathcal{M}(G)$ and $G_{\overline T}$ consists of the connected component $G_{V(H_1)\cup \{v\}}$ and of $|T|$ connected components on the vertices of $G_{V(H_2)}$ that are not in $T$. Notice that $S \cup T$ is a cut set of $G$: in fact, adding either $v$ or $w$ to $G_{\overline{S \cup T}}$, we join at least two connected components, since $v$ is not a leaf of $G_{V(H_1)\cup \{v\}}$ and $w$ is not a leaf of $G_{V(H_2)}$. Then $G_{\overline{S \cup T}}$ has $|S|$ connected components on the vertices of $G_{V(H_1)\cup \{v\}}$ and $|T|$ on the vertices of $G_{V(H_2)}$. Hence, $c_G(S \cup T)=|S|+|T|$, a contradiction.

\medskip
b) We prove the statement for $G_1$, the argument for $G_2$ is the same. Let $P_S$ be the primary components of $J_{G_1}$, $S_0 \in \mc M(G_1)$ and $k=|S_0|$. Thus, $S_0 \in \mc M(G)$ by Theorems \ref{T.type2aCM} and \ref{T.type2bBipartiteUnmixed}. Moreover, by Theorem \ref{T.cutSetMinusVertex}, there exists $s_1 \in S_0$ such that $S_1=S_0 \setminus \{s_1\} \in \mc M(G)$. Applying repeatedly Theorem \ref{T.cutSetMinusVertex}, we find a finite sequence of cut sets $S_2=S_0 \setminus \{s_1,s_2\}, S_3=S_0 \setminus \{s_1,s_2,s_3\}, \dots, S_k=S_0 \setminus S_0=\emptyset \in \mc M(G)$. Notice that $S_i \in \mc M(G_1)$ for $i=1,\dots,k$ and, by Theorem \ref{Dual graph}, $\{P_{S_i},P_{S_{i+1}}\}$ is an edge of $\mc M(G_1)$ for $i=1,\dots,k-1$. Hence,
\[
\mc P: P_{S_0},P_{S_1},P_{S_2},\dots,P_k=P_\emptyset,
\]
is a path from $P_S$ to $P_\emptyset$ in $\mc D(G_1)$. Therefore, $\mc D(G_1)$ is connected.
\end{proof}

\begin{remark}
If the graph $G$ is not bipartite, Theorem \ref{T.moreThan2CutVertices} a) does not hold. For instance, the ideal $J_G$ of the graph in Figure \ref{due triangoli} is unmixed, indeed Cohen-Macaulay, and $G$ has four cut vertices, but it is not possible to split it using the operations $\ast$ and $\circ$.

\begin{figure}[ht!]
\begin{tikzpicture}[scale=0.7]
\node (a) at (0,0) {};
\node (b) at (2,0) {};
\node (c) at (3,1.5) {};
\node (d) at (4,0) {};
\node (e) at (6,0) {};
\node (f) at (7,1.5) {};
\node (g) at (8,0) {};
\node (h) at (10,0) {};
\draw (0,0) -- (2,0) -- (4,0) -- (6,0) -- (8,0) -- (10,0)
(2,0) -- (3,1.5) -- (4,0)
(6,0) -- (7,1.5) -- (8,0);
\end{tikzpicture}
\caption{} \label{due triangoli}
\end{figure}
\end{remark}

The remaining part of the section is useful to prove that a bipartite graph $G$ with exactly two cut vertices and $\mc D(G)$ connected is of the form $F_m$.

\begin{corollary} \label{C.cutSetsContainCutVertex}
Let $G$ be a bipartite graph such that $\mc D(G)$ is connected. Then every non-empty cut set $S \in \mc M(G)$ contains a cut vertex.
\end{corollary}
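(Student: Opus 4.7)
The plan is to proceed by induction on $|S|$, using Theorem \ref{T.cutSetMinusVertex} as the engine. Recall from the definition that a cut vertex is precisely a vertex $v$ with $\{v\} \in \mc M(G)$, so the base case of the induction (when $|S|=1$) is immediate: $S$ itself is a singleton cut set, hence consists of a cut vertex.

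For the inductive step, suppose $|S| \geq 2$ and that the statement holds for every non-empty cut set of smaller cardinality. Since $\mc D(G)$ is connected and $S$ is a non-empty cut set, Theorem \ref{T.cutSetMinusVertex} provides an element $s \in S$ such that $S' = S \setminus \{s\} \in \mc M(G)$. Because $|S| \geq 2$, the cut set $S'$ is still non-empty, so by the inductive hypothesis $S'$ contains a cut vertex $v$. Since $v \in S' \subset S$, this cut vertex lies in $S$, which completes the induction.

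Equivalently, and perhaps more transparently, one may iterate Theorem \ref{T.cutSetMinusVertex} to produce a strictly decreasing chain of cut sets
\[
S = S_0 \supsetneq S_1 \supsetneq \cdots \supsetneq S_{|S|-1},
\]
where $S_{i+1} = S_i \setminus \{s_{i+1}\}$ for some $s_{i+1} \in S_i$. The last set $S_{|S|-1}$ has cardinality one and lies in $\mc M(G)$, hence its unique element is a cut vertex of $G$ contained in $S$. There is no real obstacle here: the whole argument is a direct corollary of Theorem \ref{T.cutSetMinusVertex} together with the definition of cut vertex.
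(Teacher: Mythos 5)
Your proof is correct and is essentially identical to the paper's: both argue by induction on $|S|$, using Theorem \ref{T.cutSetMinusVertex} to strip off one element at a time until a singleton cut set (i.e., a cut vertex) remains. Nothing further is needed.
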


\begin{proof}
Let $S \in \mc M(G)$ and $k=|S|$. We may assume $k \geq 2$. By Theorem \ref{T.cutSetMinusVertex}, there exists $s \in S$ such that $T=S \setminus \{s\} \in \mc M(G)$. By induction, $T$ contains a cut vertex and the claim follows.
\end{proof}

\begin{remark}
All assumptions in Theorem \ref{T.moreThan2CutVertices} and Corollary \ref{C.cutSetsContainCutVertex} are needed. In fact, both claims do not hold if we only assume $G$ bipartite but $\mathcal D(G)$ is not connected. For instance, let $G=M_{3,4}$. Then $\{3,5\}$ is a cut set that does not contain any cut vertex (see Example \ref{E.unmixedNotCM1}).

On the other hand, both results do not hold if $\mc D(G)$ is connected but $G$ is not bipartite. For example, if $G$ is the graph in Figure \ref{F.unmixedDualConnected}, then $\{3,4\} \in \mathcal M(G)$, but $3$ and $4$ are not cut vertices of $G$.
\end{remark}

\begin{corollary} \label{C.adjacentCutVertices}
Let $G$ be a bipartite graph with bipartition $V_1 \sqcup V_2$ and with exactly two cut vertices $v_1$ and $v_2$. If $\mathcal D(G)$ is connected, then $\{v_1,v_2\} \in E(G)$. In particular $|V_1|=|V_2|$.
\end{corollary}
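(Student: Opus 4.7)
The strategy is to argue by contradiction: assume $\{v_1,v_2\}\notin E(G)$ and produce a non-empty cut set of $G$ that contains none of the two cut vertices, violating Corollary \ref{C.cutSetsContainCutVertex}. Since $\mathcal D(G)$ connected forces $J_G$ to be unmixed, Proposition \ref{P.unmixed2leaves} and Remark \ref{unmixed cut vertices}(ii) identify $v_1,v_2$ as the two neighbours of the two leaves $\ell_1,\ell_2$ of $G$, and Remark \ref{unmixed cut vertices}(i) splits the proof into two cases: (A) $v_1,v_2$ lie in the same bipartition class, or (B) they lie in different classes, in which case $|V_1|=|V_2|=n$.

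In Case A I may assume $v_1,v_2\in V_2$, so that both leaves lie in $V_1$ and $|V_1|=|V_2|+1\geq 3$. I will take $S:=V_1\setminus\{\ell_1,\ell_2\}$. The only surviving edges of $G_{\overline{S}}$ are $\{v_1,\ell_1\}$ and $\{v_2,\ell_2\}$, so every component of $G_{\overline{S}}$ meets $V_2$ in exactly one vertex; consequently, any $u\in S$, having at least two distinct neighbours in $V_2$, merges at least two components of $G_{\overline{S}}$ and is essential in $S$. Since $S\subseteq V_1$ avoids both $v_1$ and $v_2$, this $S$ is a cut set disjoint from the cut vertices, yielding the contradiction.

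For Case B, suppose first that there exists $w\in V_2\setminus\{v_2\}$ with $w\not\sim v_1$; the symmetric form of Lemma \ref{maximal degree}(a) applied to $\{w\}$ then shows that $N_G(w)\subseteq V_1\setminus\{v_1\}$ is a cut set, and it clearly contains no cut vertex. The analogous construction with the roles of $V_1,V_2$ and of $v_1,v_2$ swapped handles the companion case. The expected main obstacle is the remaining ``dense'' sub-case, in which $N_G(v_1)=V_2\setminus\{v_2\}$ and $N_G(v_2)=V_1\setminus\{v_1\}$ simultaneously. Here I will first observe that $n\geq 3$ (otherwise $G$ would be disconnected under our assumption) and take $S:=V_2\setminus\{v_2,\ell_1\}$: the graph $G_{\overline{S}}$ decomposes into the two components $\{v_1,\ell_1\}$ and $\{v_2\}\cup(V_1\setminus\{v_1\})$, and every $w\in S$ is adjacent both to $v_1$ (by the sub-case hypothesis) and, because $\deg_G(w)\geq 2$, to some vertex of $V_1\setminus\{v_1\}$; adding $w$ therefore merges the two components, so $w$ is essential. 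Thus $S$ is a cut set, once again avoiding the cut vertices.

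In every case Corollary \ref{C.cutSetsContainCutVertex} is violated, so $\{v_1,v_2\}\in E(G)$, and the equality $|V_1|=|V_2|$ then follows directly from Lemma \ref{maximal degree}(b).
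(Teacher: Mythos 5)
Your proof is correct and follows essentially the same strategy as the paper: assume $\{v_1,v_2\}\notin E(G)$ and contradict Corollary \ref{C.cutSetsContainCutVertex} by exhibiting a non-empty cut set containing neither cut vertex. The only difference is that the paper does this uniformly, with no case analysis, by taking $S_i=N_G(v_i)\setminus\{f_i\}$ (where $f_i$ is the leaf adjacent to $v_i$), which is a cut set avoiding $v_1$ and $v_2$ precisely because $v_2\notin N_G(v_1)$ under the contradiction hypothesis; your three case-specific cut sets all work, but are more laborious than necessary.
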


\begin{proof}
Let $f_i$ be the leaf adjacent to $v_i$ for $i=1,2$. Assume that $\{v_1,v_2\} \notin E(G)$. Then $S_i=N_G(v_i) \setminus \{f_i\}$ is a cut set of $G$ for $i=1,2$. Moreover, $S_1$ and $S_2$ do not contain cut vertices. By Corollary \ref{C.cutSetsContainCutVertex} it follows that $\mathcal D(G)$ is disconnected, a contradiction. The last part of the claim follows from Remark \ref{unmixed cut vertices}.
\end{proof}

\begin{lemma} \label{L.vertexDegree2}
Let $G$ be a bipartite graph with bipartition $V_1 \sqcup V_2$, $|V_1|=|V_2|$ and with exactly two cut vertices. If $\mathcal{D}(G)$ is connected, then there exists a vertex of $G$ with degree $2$.
\end{lemma}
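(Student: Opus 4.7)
The plan is to apply Theorem \ref{T.cutSetMinusVertex} to the cut set $S_0 = V_1 \setminus \{f_1\}$, where $f_1$ is the leaf in $V_1$, and to show that the vertex $s_1$ whose removal produces a smaller cut set must have degree $2$. First I would organize the structure using earlier results: by Corollary \ref{C.adjacentCutVertices} the two cut vertices $v_1, v_2$ are adjacent, so they lie in distinct bipartition sets; by Remark \ref{unmixed cut vertices}(i) each of $V_1, V_2$ contains exactly one leaf, and we may take $f_1 \in V_1$ adjacent to $v_1 \in V_2$, and $f_2 \in V_2$ adjacent to $v_2 \in V_1$. By Lemma \ref{maximal degree}(b), $\deg_G(v_i) = m := |V_1| = |V_2|$, and in particular $N_G(v_1) = V_1$ and $N_G(v_2) = V_2$. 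If $m = 2$ then $\deg_G(v_1) = 2$ and the lemma is immediate, so I assume $m \geq 3$.

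Arguing as in the proof of Proposition \ref{P.unmixed2leaves}, $S_0 = V_1 \setminus \{f_1\}$ is a cut set, and the components of $G_{\overline{S_0}}$ are the edge $\{f_1, v_1\}$ together with each $v \in V_2 \setminus \{v_1\}$ as an isolated vertex, so $c_G(S_0) = m$. Since $\mc D(G)$ is connected, Theorem \ref{T.cutSetMinusVertex} provides some $s_1 \in S_0$ with $S_1 := S_0 \setminus \{s_1\} \in \mc M(G)$. I would then rule out $s_1 = v_2$: otherwise $v_2 \in G_{\overline{S_1}}$, and because $N_G(v_2) = V_2$, the vertex $v_2$ together with all of $V_2$ (and $f_1$, attached via $v_1$) forms a single connected component of $G_{\overline{S_1}}$, forcing $c_G(S_1) = 1$; but unmixedness of $J_G$ demands $c_G(S_1) = |S_1| + 1 = m - 1 \geq 2$, a contradiction.

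Therefore $s_1 \in V_1 \setminus \{v_2, f_1\}$, and $N_G(s_1) \subseteq V_2$ with $v_1 \in N_G(s_1)$. Setting $N := N_G(s_1) \cap (V_2 \setminus \{v_1\})$, the vertex $s_1$ meets exactly $1 + |N|$ components of $G_{\overline{S_0}}$ (one via $v_1$, and $|N|$ isolated ones), so adding $s_1$ produces $c_G(S_1) = m - |N|$. Comparing with the unmixedness value $c_G(S_1) = m - 1$ forces $|N| = 1$ and hence $\deg_G(s_1) = 1 + |N| = 2$. The main conceptual step is really just the choice of the starting cut set $S_0$ and the recognition that Theorem \ref{T.cutSetMinusVertex} is the appropriate tool; once $S_0$ is fixed, the component structure of $G_{\overline{S_0}}$ is so transparent that $\deg_G(s_1)$ is determined by a single component count.
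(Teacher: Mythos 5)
Your proof is correct, but it follows a genuinely different route from the paper's. The paper argues by contradiction: assuming every non-leaf vertex has degree greater than $2$, it shows that the cut set $T=V_1\setminus\{f\}$ gives an \emph{isolated} vertex $P_T$ of $\mc D(G)$, which requires checking all three possible types of neighbours of $P_T$ via Theorem \ref{Dual graph} (supersets, subsets differing by one element, and ``lateral'' cut sets $T'$ with $|T'|=|T|$). You instead take Theorem \ref{T.cutSetMinusVertex} — already available at this point in the paper — as a black box: it hands you a cut set $S_1=S_0\setminus\{s_1\}$ inside $S_0=V_1\setminus\{f_1\}$, and since the components of $G_{\overline{S_0}}$ are one edge plus isolated vertices of $V_2$, the single identity $c_G(S_1)=|S_1|+1$ forces $\deg_G(s_1)=2$. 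The core component count is the same as in the paper's second case, but your framing avoids the case analysis entirely and, as a bonus, locates the degree-$2$ vertex explicitly (in $V_1$, adjacent to $v_1$). Two minor remarks: your preliminary step ruling out $s_1=v_2$ via Lemma \ref{maximal degree} b) is sound but not actually needed — since $s_1\in V_1$ all its neighbours lie in distinct components of $G_{\overline{S_0}}$ (only $f_1$ and $v_1$ share a component, and $f_1\notin N_G(s_1)$), so the count $c_G(S_1)=m-\deg_G(s_1)+1$ yields $\deg_G(s_1)=2$ uniformly, even if $s_1=v_2$; and your appeal to Corollary \ref{C.adjacentCutVertices} and Lemma \ref{maximal degree} b) is legitimate here with no circularity, since both precede the lemma and depend only on results already established.
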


\begin{proof}
Suppose by contradiction that all the vertices of $G$, except the two leaves, have degree greater than $2$. Let $f$ be the only leaf of $G$ in $V_1$ and consider $T=V_1 \setminus \{f\}$. Clearly $G_{\overline T}$ is the disjoint union of $|V_2|-1$ isolated vertices and the edge $\{v_2,f\}$, where $v_2 \in V_2$ is a cut vertex. Therefore, $T$ is a cut set and we claim that it is an isolated vertex in $\mc D(G)$.

Notice that $T$ is not contained in any other cut set. Moreover, suppose that $S$ is a cut set of $G$ such that $S \subset T$ and $T \setminus S= \{v\}$. Since $S \subset V_1$, it follows that $\deg_{G_{\overline S}}(v)>2$. Then $c_G(S) = c_G(T \setminus \{ v \}) \leq c_G(T) -2=|V_1|-2$, since $G_{\overline T}$ consists of isolated vertices and one edge. This contradicts the unmixedness of $J_G$.

Finally, let $T'$ be a cut set such that $T \setminus T'=\{v\}$ and $T' \setminus T=\{v'\}$. If we set $S= T \setminus \{v\}=T' \setminus \{v'\}$, it follows that $v'$ has to be a cut vertex of $G_{\overline S}$. As consequence, $v'=v_2$ is the cut vertex in $V_2$, and $\{v,v'\} \in E(G)$. On the other hand, as before, $G_{\overline S}$ has at most $|V_2|-2$ connected components, then $c_G(T')=c_{G}(S)+1 \leq |V_2|-1$. This contradicts the unmixedness of $J_G$, because $|T'|=|V_2|-1$. Therefore, Theorem \ref{Dual graph} implies that $T$ is an isolated vertex in $\mathcal{D}(G)$ against our assumption.
\end{proof}

\begin{proposition}\label{P.preconoconnesso}
Let $H$ be a bipartite graph with bipartition $V_1 \sqcup V_2$ and $|V_1|=|V_2|$. Let $v$ and $f$ be two new vertices and let $G$ be the bipartite graph with $V(G)=V(H) \cup \{v,f\}$ and $E(G)=E(H) \cup \{\{v,x\} : x \in V_1 \cup \{f\} \}$. If $\mathcal{D}(G)$ is connected, then $\mathcal{D}(H)$ is connected.
\end{proposition}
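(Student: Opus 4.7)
The plan is to prove $\mc D(H)$ connected by first upgrading the hypothesis to enough structure on $H$ that Proposition \ref{Semiconi} applies, and then transporting paths in $\mc D(G)$ down to $\mc D(H)$ via Theorem \ref{T.cutSetMinusVertex}. Since $\mc D(G)$ is connected, $J_G$ is unmixed, and I would first check that $V_1 \in \mc M(G)$: this relies on the observation that $H$ has no isolated vertex, which itself follows from $G$ being connected and having exactly two leaves (by Proposition \ref{P.unmixed2leaves} together with Remark \ref{unmixed cut vertices}(i), applied to $G$'s bipartition $(V_1\cup\{f\},V_2\cup\{v\})$ of equal sizes).

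The crucial step is to locate a leaf of $H$ in $V_1$. Applying Theorem \ref{T.cutSetMinusVertex} to $V_1\in\mc M(G)$ produces some $w\in V_1$ with $V_1\setminus\{w\}\in\mc M(G)$. A direct count shows that in $G_{\overline{V_1\setminus\{w\}}}$ the vertices $v,f,w$ together with the $V_2$-neighbours of $w$ form one connected component while the remaining $V_2$-vertices are isolated, so
\[
c_G(V_1\setminus\{w\})=1+|V_2\setminus N_H(w)|.
\]
The unmixedness of $J_G$ forces $c_G(V_1\setminus\{w\})=|V_1\setminus\{w\}|+1=|V_1|$, and using $|V_1|=|V_2|$ this simplifies to $|N_H(w)|=1$, i.e., $w$ is a leaf of $H$ in $V_1$ with $\deg_G(w)=2$. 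Corollary \ref{C.unmixedSemicone} now delivers $J_H$ unmixed together with the adjacency of the leaf-neighbours of $H$, and Proposition \ref{Semiconi} yields the description
\[
\mc M(G)=\{\emptyset,V_1\}\cup\{S\cup\{v\}:S\in\mc M(H)\}\cup\{T\subset V_1:T\in\mc M(H)\}.
\]

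To finish, I would exhibit, for each $S_0\in\mc M(H)$, a path in $\mc D(H)$ from $P_{S_0}$ to $P_\emptyset$, by induction on $|S_0|$. For $|S_0|\geq 1$, Theorem \ref{T.cutSetMinusVertex} applied to $S_0\cup\{v\}\in\mc M(G)$ produces $s$ such that $(S_0\cup\{v\})\setminus\{s\}\in\mc M(G)$. If $s\neq v$, the characterization of $\mc M(G)$ above gives $S_0\setminus\{s\}\in\mc M(H)$, and $\{P_{S_0},P_{S_0\setminus\{s\}}\}$ is an edge of $\mc D(H)$ by Theorem \ref{Dual graph}(b). If $s=v$, then $S_0\in\mc M(G)$, and $S_0\subsetneq V_1$ (since $V_1$ contains the leaf $w$ of $H$, so $V_1\notin\mc M(H)$ by Remark \ref{R.leafNotCutSet}), hence by the characterization $S_0\in\mc M(H)$; a second application of Theorem \ref{T.cutSetMinusVertex} to $S_0\in\mc M(G)$ gives $s'\in S_0$ with $S_0\setminus\{s'\}\in\mc M(G)$, which by the characterization is either $\emptyset$ or lies in $\mc M(H)$, again yielding an edge in $\mc D(H)$ by Theorem \ref{Dual graph}(b). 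The main obstacle is the first step—extracting a leaf of $H$ in $V_1$ from the connectedness of $\mc D(G)$; once this is in hand, the rest of the argument is a clean combination of Proposition \ref{Semiconi}, Theorem \ref{Dual graph}, and Theorem \ref{T.cutSetMinusVertex}.
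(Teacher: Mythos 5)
Your argument is correct, and it reaches the conclusion by a genuinely different route than the paper. For the first half (getting into the setting of Proposition \ref{Semiconi}), the paper invokes Lemma \ref{maximal degree}~b) to see that $G$ has exactly two adjacent cut vertices and then Lemma \ref{L.vertexDegree2} to produce a degree-$2$ vertex; you instead apply Theorem \ref{T.cutSetMinusVertex} to the cut set $V_1$ and do a direct component count in $G_{\overline{V_1\setminus\{w\}}}$, which is arguably cleaner since it pins the degree-$2$ vertex down inside $V_1$ (exactly what the ``moreover'' clause of Proposition \ref{Semiconi} requires) and identifies it as a leaf of $H$. One small slip here: Corollary \ref{C.unmixedSemicone} \emph{assumes} $J_H$ unmixed, so it cannot ``deliver'' it; the statement you actually need is precisely the converse clause of Proposition \ref{Semiconi} (which applies because you have produced $w\in V_1$ with $\deg_G(w)=2$), and that clause gives both the unmixedness of $J_H$ and the adjacency of the leaf-neighbours, whence the description of $\mc M(G)$. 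For the second half, the paper classifies all edges of $\mc D(G)$ and reroutes an arbitrary path from $P_{\{v\}}$ to $P_{S\cup\{v\}}$ so that it only visits cut sets containing $v$, then reads it off in $\mc D(H)$; you avoid paths in $\mc D(G)$ entirely and instead show that $\mc M(H)$ inherits the peeling property (condition d) of Theorem \ref{T.CHARACTERIZATION}) from $\mc M(G)$ via the explicit description of $\mc M(G)$, concluding with Theorem \ref{Dual graph}~b) applied to $H$. Your case analysis ($s\neq v$ versus $s=v$, with $S_0\neq V_1$ guaranteed by Remark \ref{R.leafNotCutSet} since $w\in V_1$ is a leaf of $H$) is complete, and no circularity is introduced since everything you use precedes this proposition in the paper's logical order. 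What your approach buys is that it bypasses the edge classification (i)--(v) and the path-surgery argument; what it costs is nothing of substance, though it leans more heavily on Theorem \ref{T.cutSetMinusVertex}, which the paper only needs once here.
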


\begin{proof}
Let $f_2$ be the leaf of $G$ in $V_2$ and $w$ its only neighbour, which is a cut vertex. Lemmas \ref{maximal degree} b) and \ref{L.vertexDegree2} imply that there is a vertex with degree $2$ in $G$. Thus, by Proposition \ref{Semiconi},
\[
\mathcal M(G) = \{\emptyset, V_1\} \cup \{S \cup \{v\} : S \in \mathcal M(H) \} \cup \{T \subset V_1 : T \in \mathcal M(H)\}.
\]
Let us denote by $P_S$ the primary components of $J_G$ and by $Q_S$ those of $J_H$. Using Theorem \ref{Dual graph}, we can give a complete description of the edges of $\mc D(G)$:
\begin{itemize}
\item[(i)] $\{P_\emptyset, P_T\} \in E(\mathcal{D}(G))$ if and only if either $T=\{v\}$ or $T=\{w\}$,
\item[(ii)] $\{P_{V_1},P_T\} \in E(\mathcal{D}(G))$ if and only if either $T=V_1 \setminus \{f_1\}$ or $T=(V_1 \setminus \{f_1\}) \cup \{v\}$;
\item[(iii)] if $S_1, S_2 \in \mathcal{M}(H)$, then $\{P_{S_1 \cup\{v\}}, P_{S_2 \cup \{v\}} \} \in E(\mathcal{D}(G))$ if and only if $\{Q_{S_1},Q_{S_2}\} \in E(\mathcal{D}(H))$;
\item[(iv)] if $T_1, T_2 \in \mathcal{M}(G)$ are strictly contained in $V_1$, then we have $\{P_{T_1},P_{T_2}\} \in E(\mathcal{D}(G))$ if and only if $\{Q_{T_1},Q_{T_2}\} \in E(\mathcal{D}(H))$;
\item[(v)] if $S,T \in \mathcal{M}(H)$ and $T \subsetneq V_1$, then $\{P_{S \cup \{v\}}, P_T\} \in E(\mathcal{D}(G))$ if and only if $S=T$.
\end{itemize}

If $S \in \mc M(H)$, it is enough to prove that $Q_S$ is in the same connected component as $Q_\emptyset$ in $\mathcal{D}(H)$. By (iii), this is equivalent to prove that in $\mathcal{D}(G)$ there exists a path $P_{\{v\}} = P_{U_1}, P_{U_2}, \dots, P_{U_r} = P_{S \cup \{v\}}$ such that $U_i$ contains $v$ for all $i$. Since $\mathcal{D}(G)$ is connected, we know that there exists a path $\mc P$ from $P_{\{v\}}$ to $P_{S \cup \{v\}}$. We first note that, if $\mc P$ contains $P_\emptyset$ or $P_{V_1}$, we may avoid them: in fact, by (i) and (ii), they only have two neighbours; for $P_{V_1}$ they are adjacent by (v), whereas we may replace $P_{\emptyset}$ with $P_{\{v,w\}}$ by (iii) and (v). Let $i$ be the smallest index for which $U_i$ does not contain $v$. This means that $U_i \subsetneq V_1$ and $U_{i-1}=U_i \cup \{v\}$ by (v). Moreover, $U_{i+1}$ does not contain $v$, otherwise it would be equal to $U_{i-1}$ (again by (v)). Therefore, $U_{i+1} \subsetneq V_1$ and $\{Q_{U_i}, Q_{U_{i+1}}\} \in E(\mathcal{D}(H))$ by (iv). Thus,  replacing $U_i$ with $U_{i+1} \cup \{v\}$ in $\mc P$, we get a new path from $P_{\{v\}}$ to $P_{S \cup \{v\}}$, by (iii) and (iv). Repeating the same argument finitely many times, we eventually find a path from $P_{\{v\}}$ to $P_{S \cup \{v\}}$ that involves only cut sets containing $v$. Thus $\mathcal{D}(H)$ is connected by (iii).
\end{proof}

\vspace{3mm}

\section{The main theorem} \label{S.mainTheorem}

In this section we prove the main theorem of the paper and give some applications.
\clearpage

\begin{theorem} \label{T.CHARACTERIZATION}
Let $G$ be a connected bipartite graph. The following properties are equivalent:
\begin{itemize}
  \item[{\rm a)}] $J_G$ is Cohen-Macaulay;
  \item[{\rm b)}] the dual graph $\mathcal D(G)$ is connected;
  \item[{\rm c)}] $G=A_1 \ast A_2 \ast \cdots \ast A_k$, where $A_i=F_m$ or $A_i=F_{m_1} \circ \cdots \circ F_{m_r}$, for some $m \geq 1$ and $m_j \geq 3$;
  \item[{\rm d)}] $J_G$ is unmixed and for every non-empty $S \in \mc M(G)$, there exists $s \in S$ such that $S \setminus \{s\} \in \mc M(G)$.
\end{itemize}
\end{theorem}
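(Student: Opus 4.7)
The plan is to establish the four implications (a)$\Rightarrow$(b), (b)$\Leftrightarrow$(d), (c)$\Rightarrow$(a), and the structure theorem (b)$\Rightarrow$(c); all but the last assemble quickly from results already in hand.

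For the easier implications I would proceed as follows. The implication (a)$\Rightarrow$(b) is Hartshorne's classical connectedness-in-codimension-one result \cite{H62}. For (c)$\Rightarrow$(a), I induct on the number $k$ of $\ast$-factors, using that each $F_m$ is Cohen-Macaulay by Proposition \ref{P.type1CM}, each $\circ$-chain $F_{m_1}\circ\cdots\circ F_{m_r}$ with $m_j \geq 3$ by Theorem \ref{T.type2bCMbipartite}, and the $\ast$ operation preserves Cohen-Macaulayness by Theorem \ref{T.type2aCM}~c). For (b)$\Rightarrow$(d), a connected dual graph already forces unmixedness (only minimal primes of height $\hgt(J_G)$ may meet an edge), and the cut-set assertion is precisely Theorem \ref{T.cutSetMinusVertex}. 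For (d)$\Rightarrow$(b), repeated application of the cut-set-minus-one property produces, for every $S \in \mc M(G)$, a strictly decreasing chain $S = S_0 \supsetneq S_1 \supsetneq \cdots \supsetneq S_t = \emptyset$ in $\mc M(G)$ with $|S_i \setminus S_{i+1}| = 1$; by Theorem \ref{Dual graph}~b) consecutive pairs are edges of $\mc D(G)$, producing a path from $P_S$ to $P_\emptyset$.

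The central implication (b)$\Rightarrow$(c) I would prove by induction on $|V(G)|$. Small cases with zero or one cut vertex ($G = F_1$ or $G = F_1 \ast F_1$) are verified directly. If $G$ has at least three cut vertices, Theorem \ref{T.moreThan2CutVertices} decomposes $G = G_1 \ast G_2$ or $G = G_1 \circ G_2$ with both $\mc D(G_i)$ connected and $|V(G_i)| < |V(G)|$; the inductive c)-decompositions of $G_1$ and $G_2$ are then concatenated (in the $\ast$ case) or spliced at the appropriate outer $F$-block of each chain (in the $\circ$ case) to yield a c)-decomposition of $G$. The remaining base case is when $G$ has exactly two cut vertices: by Remark \ref{unmixed cut vertices} these are the neighbours $v_1 \in V_1$, $v_2 \in V_2$ of the two leaves, by Corollary \ref{C.adjacentCutVertices} they are adjacent, and by Lemma \ref{maximal degree}~b) each is universal in the opposite bipartition set with $|V_1| = |V_2| = m$. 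I show $G = F_m$ by a sub-induction on $m$: Lemma \ref{L.vertexDegree2} supplies a degree-two vertex $w$, which up to swapping the roles of $v_1$ and $v_2$ lies in $V_1$; removing $v_2$ and its leaf yields a bipartite graph $H$ of which $G$ is the cone extension of Proposition \ref{Semiconi}. The converse part of Proposition \ref{Semiconi} (applicable because $\deg_G(w) = 2$) forces $J_H$ to be unmixed with adjacent leaf-neighbours, Proposition \ref{P.preconoconnesso} gives $\mc D(H)$ connected, and Lemma \ref{maximal degree}~b) then forces $H$ to have exactly two cut vertices. The sub-inductive hypothesis gives $H = F_{m-1}$, and since the cone construction matches the recursive description of $F_m$ used in the proof of Proposition \ref{P.type1CM}, we obtain $G = F_m$.

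The main obstacle lies in the combining step of the outer induction in the $\circ$ case: the $\ast$-chains of $G_1$ and $G_2$ must be welded at the correct outer blocks, and the resulting internal $\circ$-chain must respect the requirement $m_j \geq 3$ on all interior factors. The case where the relevant outer block is $F_1$ is excluded by the definition of $\circ$, which demands leaf-neighbour degree at least three; an outer $F_2$ block must be rewritten as $F_1 \ast \cdots$ using the identification recorded in Remark \ref{R.2bRemark}. A further subtlety is ensuring that the sub-induction in the base case never exits the class of graphs with exactly two cut vertices, which is guaranteed by the universal-vertex conclusion of Lemma \ref{maximal degree}~b), which in turn depends on the adjacency of leaf-neighbours in $H$ secured by the converse of Proposition \ref{Semiconi}.
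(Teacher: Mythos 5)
Your proposal is correct and follows essentially the same route as the paper: Hartshorne for a)$\Rightarrow$b), Proposition \ref{P.type1CM}, Theorem \ref{T.type2bCMbipartite} and Theorem \ref{T.type2aCM} for c)$\Rightarrow$a), Theorem \ref{T.cutSetMinusVertex} together with Theorem \ref{Dual graph}~b) for b)$\Leftrightarrow$d), and for b)$\Rightarrow$c) the same reduction via Theorem \ref{T.moreThan2CutVertices} to the two-cut-vertex case, which is identified with $F_m$ through Corollary \ref{C.adjacentCutVertices}, Lemma \ref{maximal degree}~b), Proposition \ref{Semiconi} and Proposition \ref{P.preconoconnesso}. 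The only deviations are cosmetic (induction on $|V(G)|$ instead of on the number of cut vertices, and a slightly more explicit treatment of the small base cases and of the welding of $\circ$-chains), so there is nothing substantive to add.
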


\begin{proof}
The implication a) $\Rightarrow$ b) follows by Hartshorne's Connectedness Theorem \cite[Proposition 1.1, Corollary 2.4, Remark 2.4.1]{H62}.

b) $\Rightarrow$ c): We may assume that $G$ has more than two vertices. Recall that, since $\mathcal{D}(G)$ is connected, then $J_G$ is unmixed. By Proposition \ref{P.unmixed2leaves}, $G$ has exactly two leaves, hence at least two cut vertices $v_1,v_2$, which are their neighbours. We proceed by induction on the number $h \geq 2$ of cut vertices of $G$.

Let $h=2$. We claim that $G=F_m$, for some $m \geq 2$. Let $V(G)=V_1 \sqcup V_2$ be the bipartition of the vertex set of $G$. By Corollary \ref{C.adjacentCutVertices}, we have that $\{v_1,v_2\} \in E(G)$ and $|V_1|=|V_2|$, with $v_i \in V_i$ for $i=1,2$. We proceed by induction on $m=|V_1|=|V_2|$. If $m=2$, then $G=F_2$. Let $m>2$ and consider the graph $H$ obtained removing $v_2$ and the leaf adjacent to it. Lemma \ref{maximal degree} b) implies that $v$ has degree $m$ and $H$ has exactly two cut vertices, whereas by Proposition \ref{P.preconoconnesso}, $\mc{D}(H)$ is connected. Hence, by induction, it follows that $H=F_{m-1}$ and $G=F_m$ by construction.

Assume now $h>2$. Let $v$ be a cut vertex of $G$ such that $v \neq v_1,v_2$. By Theorem \ref{T.moreThan2CutVertices}, there exist two graphs $G_1$ and $G_2$ such that $G=G_1 \ast G_2$ or $G=G_1 \circ G_2$ and $\mc{D}(G_1), \mc{D}(G_2)$ are connected. If $G=G_1 \ast G_2$, by induction they are of the form $A_1 \ast A_2 \ast \cdots \ast A_k$, for some $k \geq 1$, where $A_i=F_m$, with $m \geq 1$, or $A_i=F_{m_1} \circ \cdots \circ F_{m_r}$, with $m_j \geq 3$ for $j=1,\dots,r$.

On the other hand, if $G=G_1 \circ G_2$, it follows that $G_1=A_1 \ast A_2 \ast \cdots \ast A_s$ and $G_2=B_1 \ast B_2 \ast \cdots \ast B_t$, where each $A_i$ and $B_i$ are equal to $F_m$, for some $m \geq 1$, or to $F_{m_1} \circ \cdots \circ F_{m_r}$, with $m_j \geq 3$ for $j=1,\dots,r$. By Theorem \ref{T.type2bBipartiteUnmixed}, it follows that if $A_s=F_m$ or $B_1=F_m$, then $m \geq 3$.

c) $\Rightarrow$ a): Let $G$ be a graph as in c). We proceed by induction on $k \geq 1$.

If $k=1$, then $G=F_m$ for some $m \geq 1$, or $G=F_{m_1} \circ \cdots \circ F_{m_r}$, with $m_j \geq 3$ for $j=1,\dots,r$. In the first case the claim follows from Proposition \ref{P.type1CM}, in the latter from Theorem \ref{T.type2bCMbipartite}.

Let $k>1$ and consider the graphs $G_1=A_1 \ast A_2 \ast \cdots \ast A_{k-1}$ and $G_2=A_k$. By induction, $J_{G_1}$ is Cohen-Macaulay and, by the previous argument, also $J_{G_2}$ is Cohen-Macaulay. Then, the claim follows from Theorem \ref{T.type2aCM}.

b) $\Leftrightarrow$ d): The first implication follows from Theorem \ref{T.cutSetMinusVertex}. Conversely, let $S \in \mc{M}(G)$, $S \neq \emptyset$, and $P_S$ be the primary components of $J_G$. It suffices to show that there exists a path from $P_{\emptyset}$ to $P_S$. If $|S|=1$, the claim follows by Theorem \ref{Dual graph} b). If $|S|>1$, by assumption, there exists $s \in S$ such that $S \setminus \{s\} \in \mc{M}(G)$ and, by induction, there exists a path from $P_{\emptyset}$ to $P_{S \setminus \{s\}}$. Thus, Theorem \ref{Dual graph} b) implies that $\{P_{S \setminus \{s\}},P_S\}$ is an edge of $\mc{D}(G)$.
\end{proof}

Theorem \ref{T.CHARACTERIZATION} can be restated in the following way. Let $G$ be a connected bipartite graph. If it has exactly two cut vertices, then $J_G$ is Cohen-Macaulay if and only if $G=F_m$ for some $m \geq 1$. If it has more than two cut vertices, then $J_G$ is Cohen-Macaulay if and only if there exist two bipartite graphs $G_1,G_2$ such that $J_{G_1}, J_{G_2}$ are Cohen-Macaulay and $G=G_1 \ast G_2$ or $G=G_1 \circ G_2$.

\smallskip
Figure \ref{F.classification} shows a graph $G$ obtained by a sequence of operations $\ast$ and $\circ$ on a finite set of graphs of the form $F_m$. More precisely, $G=F_3 \ast F_3 \circ F_4 \ast F_1 \ast F_3 \circ F_3$ and $v_i$ denotes the only common vertex between two consecutive blocks. By Theorem \ref{T.CHARACTERIZATION}, $J_G$ is Cohen-Macaulay.

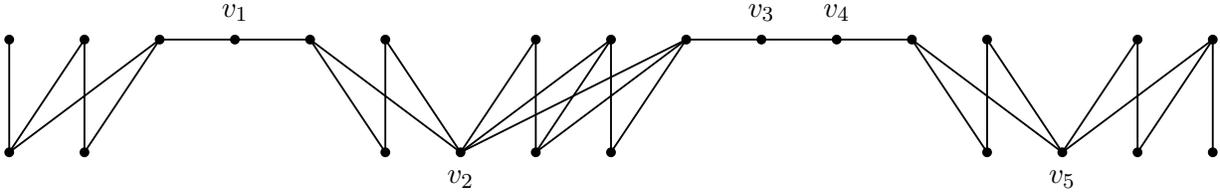
\begin{figure}[ht!]
\begin{tikzpicture}
\draw (7.,0.)-- (8.,1.5);
\draw (7.,0.)-- (9.,1.5);
\draw (8.,0.)-- (9.,1.5);
\draw (7.,0.)-- (10.,1.5);
\draw (8.,0.)-- (10.,1.5);
\draw (9.,0.)-- (10.,1.5);
\draw (6.,1.5)-- (7.,0.);
\draw (5.,1.5)-- (7.,0.);
\draw (6.,1.5)-- (6.,0.);
\draw (6.,0.)-- (5.,1.5);
\draw (3.,1.5)-- (2.,0.);
\draw (1.,0.)-- (3.,1.5);
\draw (2.,1.5)-- (1.,0.);
\draw (2.,1.5)-- (2.,0.);
\draw (1.,1.5)-- (1.,0.);
\draw (13.,1.5)-- (14.,0.);
\draw (13.,1.5)-- (15.,0.);
\draw (14.,1.5)-- (15.,0.);
\draw (16.,1.5)-- (15.,0.);
\draw (15.,0.)-- (17.,1.5);
\draw (17.,1.5)-- (16.,0.);
\draw (17.,1.5)-- (17.,0.);
\draw (8.,1.5)-- (8.,0.);
\draw (9.,1.5)-- (9.,0.);
\draw (14.,1.5)-- (14.,0.);
\draw (16.,1.5)-- (16.,0.);
\draw (13.,1.5)-- (10.,1.5);
\draw (3.,1.5)-- (5.,1.5);
\node (a) at (6.,0.) {};
\node[label={below:$v_2$}] (b) at (7.,0.) {};
\node (c) at (8.,0.) {};
\node (d) at (9.,0.) {};
\node (e) at (6.,1.5) {};
\node (f) at (8.,1.5) {};
\node (g) at (9.,1.5) {};
\node (h) at (10.,1.5) {};
\node (i) at (5.,1.5) {};
\node[label={above:$v_1$}] (j) at (4.,1.5) {};
\node (k) at (2.,0.) {};
\node (l) at (3.,1.5) {};
\node (m) at (1.,0.) {};
\node (n) at (1.,1.5) {};
\node (o) at (2.,1.5) {};
\node[label={above:$v_3$}] (p) at (11.,1.5) {};
\node[label={above:$v_4$}] (q) at (12.,1.5) {};
\node (r) at (13.,1.5) {};
\node (s) at (14.,0.) {};
\node[label={below:$v_5$}] (t) at (15.,0.) {};
\node (u) at (16.,0.) {};
\node (v) at (17.,0.) {};
\node (w) at (14.,1.5) {};
\node (x) at (16.,1.5) {};
\node (y) at (17.,1.5) {};
\end{tikzpicture}
\caption{The graph $G=F_3 \ast F_3 \circ F_4 \ast F_1 \ast F_3 \circ F_3$} \label{F.classification}
\end{figure}

\bigskip
It is interesting to notice that, Theorem \ref{T.CHARACTERIZATION} gives, at the same time, a classification of other known classes of Cohen-Macaulay binomial ideals associated with graphs. We recall that, given a graph $G$, the \textit{Lov\'asz-Saks-Schrijver ideal} $L_G$ (see \cite{HMMW15}), the \textit{permanental edge ideal} $\Pi_G$ (see \cite[Section 3]{HMMW15}) and the \textit{parity binomial edge ideal} $\mathcal I_G$ (see \cite{KSW16}) are defined respectively as
\begin{gather*}
L_G=(x_ix_j + y_iy_j : \{i,j\} \in E(G)), \\
\Pi_G=(x_iy_j + x_jy_i : \{i,j\} \in E(G)), \\
\mc I_G=(x_ix_j - y_iy_j : \{i,j\} \in E(G)).
\end{gather*}

\begin{corollary} \label{C.otherBinomialIdeals}
Let $G$ be a bipartite connected graph. Then Theorem {\rm \ref{T.CHARACTERIZATION}} holds for $L_G$, $\Pi_G$ and $\mathcal I_G$.
\end{corollary}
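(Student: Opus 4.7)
The plan is to reduce the corollary to Theorem \ref{T.CHARACTERIZATION} by exhibiting $K$-algebra automorphisms of $R = K[x_i,y_i : i \in [n]]$ that carry $J_G$ onto each of $\Pi_G$, $\mc I_G$ and $L_G$ when $G$ is bipartite. Fix a bipartition $V(G) = V_1 \sqcup V_2$. Let $\varphi$ be the graded automorphism sending $y_i \mapsto -y_i$ for $i \in V_1$ and fixing every other variable, and let $\psi$ be the automorphism swapping $x_i \leftrightarrow y_i$ for every $i \in V_2$ and fixing the remaining variables. Since $G$ is bipartite, every edge $\{i,j\}$ has (say) $i \in V_1$, $j \in V_2$, and a direct calculation gives
\begin{align*}
\varphi(x_iy_j - x_jy_i) &= x_iy_j + x_jy_i,\\
\psi(x_iy_j - x_jy_i) &= x_ix_j - y_iy_j,\\
(\varphi \circ \psi)(x_iy_j - x_jy_i) &= x_ix_j + y_iy_j,
\end{align*}
so $\varphi(J_G) = \Pi_G$, $\psi(J_G) = \mc I_G$ and $(\varphi\circ\psi)(J_G) = L_G$. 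In characteristic $2$ the map $\varphi$ is the identity and one has $\Pi_G = J_G$, $L_G = \mc I_G$, so the same diagram still applies.

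For each $\alpha \in \{\varphi,\psi,\varphi\circ\psi\}$, the induced graded $K$-algebra isomorphism $R/J_G \cong R/\alpha(J_G)$ transfers Cohen-Macaulayness and unmixedness, and the pullback along $\alpha$ is a height-preserving bijection between the minimal primes of $\alpha(J_G)$ and those of $J_G$ that also preserves the heights of pairwise sums. Consequently $\mc D(\alpha(J_G)) \cong \mc D(J_G)$ as abstract graphs. On the combinatorial side, conditions (c) and (d) of Theorem \ref{T.CHARACTERIZATION} are statements about the graph $G$ and the set $\mc M(G)$, both intrinsic to $G$ and independent of which of the four ideals is considered.

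Putting these ingredients together, if we write (a$_I$), (b$_I$), (c$_I$), (d$_I$) for the four conditions of Theorem \ref{T.CHARACTERIZATION} with $J_G$ replaced by $I \in \{L_G, \Pi_G, \mc I_G\}$, then (a$_I$) $\Leftrightarrow$ (a$_{J_G}$) and (b$_I$) $\Leftrightarrow$ (b$_{J_G}$) follow from the ring isomorphism $R/I \cong R/J_G$, while (c$_I$) and (d$_I$) are literally the same statements as (c$_{J_G}$) and (d$_{J_G}$). The equivalence of (a$_I$), (b$_I$), (c$_I$), (d$_I$) therefore follows from Theorem \ref{T.CHARACTERIZATION} applied to $J_G$. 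No nontrivial obstacle is expected: the only delicate point is to verify that the listed substitutions really define automorphisms in every characteristic and are compatible with the sign conventions in the definitions of $L_G$, $\Pi_G$, and $\mc I_G$, which is the short computation above.
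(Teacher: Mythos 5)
Your proposal is correct and follows essentially the same route as the paper: both arguments exhibit explicit linear changes of variables (sign changes and swaps of $x_i \leftrightarrow y_i$ on one side of the bipartition) that carry $J_G$ onto $\Pi_G$, $\mathcal I_G$ and $L_G$, and then transfer all four conditions of Theorem \ref{T.CHARACTERIZATION} through the induced isomorphism. The only difference is cosmetic — the paper places the substitutions on $V_2$ rather than $V_1$ — and your verification of the characteristic-$2$ case is a welcome extra detail.
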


\begin{proof}
Let $G$ be a bipartite graph  with bipartition $V(G)=V_1 \sqcup V_2$. Then the binomial edge ideal $J_G$ can be identified respectively with $L_G$, $\Pi_G$ and $\mathcal I_G$ by means of the isomorphisms induced by:
\[
\begin{array}{ccc}
(x_i,y_i) \xmapsto{L_G} \begin{cases}
(x_i,y_i) & \text{if } i \in V_1\\
(y_i,-x_i) & \text{if } i \in V_2
\end{cases},
& \hspace{1mm}
(x_i,y_i) \xmapsto{\Pi_G} \begin{cases}
(x_i,y_i) & \text{if } i \in V_1\\
(-x_i,y_i) & \text{if } i \in V_2
\end{cases},
& \hspace{1mm}
(x_i,y_i) \xmapsto{\mathcal I_G} \begin{cases}
(x_i,y_i) & \text{if } i \in V_1\\
(y_i,x_i) & \text{if } i \in V_2
\end{cases}.
\end{array}
\]
Notice that the first transformation is more general than the one described in \cite[Remark 1.5]{HMMW15}.

Thus, for bipartite graphs, these four classes of binomial ideals are essentially the same and Theorem \ref{T.CHARACTERIZATION} classifies which of these ideals are Cohen-Macaulay.
\end{proof}

\medskip
As a final application, we show that \cite[Conjecture 1.6]{BV15} holds for Cohen-Macaulay binomial edge ideals of bipartite graphs. Recall that the \textit{diameter}, $\mathrm{diam}(G)$, of a graph $G$ is the maximal distance between two of its vertices. A homogeneous ideal $I$ in $A=K[x_1,\cdots,x_n]$ is called \textit{Hirsch} if $\mathrm{diam}(\mc{D}(I)) \leq \hgt(I)$. In \cite{BV15}, the authors conjecture that every Cohen-Macaulay homogeneous ideal generated in degree two is Hirsch.

\begin{corollary} \label{C.Hirsch}
Let $G$ be a bipartite connected graph such that $J_G$ is Cohen-Macaulay. Then $J_G$ is Hirsch.
\end{corollary}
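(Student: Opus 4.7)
The plan is to use the Cohen--Macaulay assumption, via Theorem \ref{T.CHARACTERIZATION}, to construct, for each non-empty cut set $S$, a short path in $\mc{D}(G)$ from $P_S$ to $P_\emptyset$, and then to combine two such paths through $P_\emptyset$ by the triangle inequality.

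First I would invoke the equivalence a) $\Leftrightarrow$ d) in Theorem \ref{T.CHARACTERIZATION}: the Cohen--Macaulayness of $J_G$ guarantees that, for every non-empty $S \in \mc{M}(G)$, some $s \in S$ satisfies $S \setminus \{s\} \in \mc{M}(G)$. Iterating this step produces a chain of cut sets $S = S_0 \supsetneq S_1 \supsetneq \cdots \supsetneq S_{|S|} = \emptyset$, each with $|S_{i-1} \setminus S_i| = 1$. By Theorem \ref{Dual graph} b), each pair $\{P_{S_{i-1}}, P_{S_i}\}$ is an edge of $\mc{D}(G)$, so $d(P_S, P_\emptyset) \le |S|$. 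Consequently, for any two cut sets $S, T \in \mc{M}(G)$,
\[
d(P_S, P_T) \le d(P_S, P_\emptyset) + d(P_\emptyset, P_T) \le |S| + |T|.
\]

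To conclude, I would bound $|S|$ purely combinatorially. Setting $n = |V(G)|$, the unmixedness of $J_G$ gives $c_G(S) = |S| + 1$; since $G_{\overline{S}}$ has only $n - |S|$ vertices, it admits at most $n - |S|$ connected components, whence $|S| + 1 \le n - |S|$, i.e., $|S| \le (n-1)/2$. Applied to both $S$ and $T$ this yields $|S| + |T| \le n - 1 = \hgt(J_G)$, the last equality following from the connectedness of $G$. Taking the maximum over $S, T$ produces $\mathrm{diam}(\mc{D}(G)) \le \hgt(J_G)$, which is exactly the Hirsch property.

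No serious obstacle is anticipated: once Theorem \ref{T.CHARACTERIZATION} d) is available, both the construction of the path to $P_\emptyset$ and the final counting argument are immediate. The only point to verify carefully is that the chain obtained by iterating d) actually remains inside $\mc{M}(G)$ at every step, but this follows by a straightforward induction on $|S|$.
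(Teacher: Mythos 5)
Your proposal is correct and follows essentially the same route as the paper: both arguments build a path from $P_S$ to $P_\emptyset$ by repeatedly removing a vertex as guaranteed by condition d) of Theorem \ref{T.CHARACTERIZATION} (with edges supplied by Theorem \ref{Dual graph} b)), concatenate two such paths through $P_\emptyset$, and bound $|S|$ via the counting inequality $|S|+(|S|+1)\le n$ coming from unmixedness. The only difference is cosmetic: the paper states the bound as $|S|\le\lceil n/2\rceil-1$ while you write $|S|\le (n-1)/2$, which is the same estimate.
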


\begin{proof}
Let $S \in \mc{M}(G)$ be a cut set of $G$ and let $n=|V(G)|$. We may assume $n \geq 3$, otherwise $\mc{D}(J_G)$ is a single vertex. Since $J_G$ is unmixed, $G_{\overline S}$ has exactly $|S|+1$ connected components and we claim that $|S| \leq \lceil \frac{n}{2} \rceil -1$.  In fact, if $|S| \geq \lceil \frac{n}{2} \rceil$, we would have
$$|V(G)| \geq |S|+|S|+1 \geq \left\lceil \frac{n}{2} \right\rceil + \left\lceil \frac{n}{2} \right\rceil +1 \geq \frac{n}{2} + \frac{n}{2} +1= n+1,$$
a contradiction. Consider now another cut set $T$  of $G$. By Theorem \ref{T.CHARACTERIZATION} d), it follows that there is a path connecting $P_S$ and $P_T$, containing $P_{\emptyset}$ and with length $|S|+|T| \leq 2(\lceil \frac{n}{2} \rceil -1) \leq n-1$. Thus, $\mathrm{diam}(\mc D(J_G)) \leq n-1=\hgt(J_G)$. \qedhere
\end{proof}

\section*{Acknowledgments}
The authors acknowledge the extensive use of the software \texttt{Macaulay2} \cite{Mac2} and \texttt{Nauty} \cite{Nauty}. They also thank Giancarlo Rinaldo for pointing out an error in Remark 3.1 in a previous version of the manuscript.

\end{document}